\numberwithin{equation}{section}
\newtheorem{prop}[equation]{Proposition}
\newtheorem{thm}[equation]{Theorem}
\newtheorem{lemma}[equation]{Lemma}
\newtheorem{cor}[equation]{Corollary}
\newtheorem{defn}[equation]{Definition}
\theoremstyle{definition}
\newtheorem{ex}[equation]{Example}
\newtheorem{rmk}[equation]{Remark}
\newcommand{\p}[1]{\mathbb{P}^{#1}}
\newcommand{\gquot}{/\!\!/}
\newcommand{\SL}{\operatorname{SL}}
\begin{document}
\title{On the GIT Quotient Space of Quintic Surfaces}
\author[Gallardo]{Patricio Gallardo}
\email{pgallardo@math.sunysb.edu}
\address{Department of Mathematics\\Stony Brook University\\ Stony Brook, NY 11794}
\classification{14J10, 14L24.}
\keywords{Geometric invariant theory, Quintic surfaces}
%\thanks{The  author was partially supported by a Turner Fellowship and NSF-CAREER grant DMS-125481 (PI: R. Laza.)}
%
\begin{abstract} 
We describe the GIT compactification for the moduli space of smooth quintic surfaces in $\p{3}$. In particular, we show that a normal quintic surface with at worst an isolated double point or a minimal elliptic singularity is stable. We also describe the boundary of the GIT quotient,  and we discuss the stability of the non-normal surfaces.
\end{abstract}
\maketitle{}
\section{Introduction}

Horikawa showed that if $X$ is a minimal algebraic surface
with invariants  $p_g=4$, $q=0$ and $c_1^2=5$, and its canonical system $|K_X|$ has no base points, then there exists a birational holomorphic map from $X$ onto a  quintic surface in $\p3$ with  at most ADE singularities (see \cite[ Th. 1]{horikawa}). 
Therefore, our GIT quotient $\overline{\mathcal{M}}^{\scriptsize{GIT}}_5$ is a weakly modular compactification of the loci parametrizing these surfaces.  In a more general context, Gieseker \cite{gieseker} proved the existence of a quasi-projective coarse moduli space for smooth projective surfaces of general type with fixed invariants $p_g$, $q$, and $c_1^2$.  More recently, modular compactifications of theses spaces were constructed by Koll\'ar, Shepherd-Barron  \cite{KSB98}, and Alexeev   \cite{alexeev}.  Currently, we have a limited  understanding of the KSBA compactification $\overline{\mathcal{M}}^{\scriptsize{KSBA}}_5$ 
of the loci parametrizing  quintic surfaces with at most ADE singularities (see \cite{Rana}, \cite{pgthesis}). Therefore, our results are relevant for shedding light in the degenerations of quintic surfaces.

\subsection{Brief Description of the GIT Results}

The GIT quotient space of quintic surfaces (see Figure \ref{fig:GIT}) is a $40$-dimensional projective variety. Next, we give a brief classification of the surfaces parametrized by it.

We characterize stability for normal quintic surfaces.  We show that  quintic surfaces with only isolated double point singularities and  isolated triple point singularities with reduced tangent cone are stable (see Corollary \ref{cor:isolateddp}). 
Normal surfaces whose each singularity has either Milnor number smaller than $22$ or modality smaller than 5 are stable (see Proposition \ref{boundinv}). 
Quintic surfaces with minimal elliptic singularities are stable (Corollary \ref{prop:stableminimalelliptic});  the minimal elliptic surface singularities are analogous to the curve singularities with classical genus drop invariant equal to one  (see Proposition  \ref{effectgenus}).   Surfaces with isolated triple point singularities with non-reduced tangent cone can be both stable and unstable (see Proposition \ref{trpq}). Quintic surfaces with a quadruple point are unstable  (see Proposition \ref{prop:q4pt}).

%% HERE
%%%

We give a partial description of stable non-normal quintic surfaces.  We show that quintic surfaces with an irreducible curve of singularities  of genus greater than one are stable  (see Corollary \ref{highergenus}).   A generic quintic surface with a curve of singularities of multiplicity three such that the support of that curve does not contain any line is stable
(see Proposition \ref{mltp3}).  Surfaces with  a triple line are unstable (see Proposition \ref{prop:tripleline}). Quintic surfaces that decompose in a union of a plane and a quartic surface are discussed in Proposition \ref{41}. 

\begin{figure}[h!]
 \centering
\begin{tikzpicture}[scale=0.4]
	\draw[dashed,color=gray](15,4) -- (30,7); \draw (11,5) node {$\dim (\Lambda_2)=1$};
	\draw[dashed,color=gray] (15,16) -- (30,13);
	\draw[very thick] (15,4) -- (15,7);  \filldraw[black] (15,10) circle(1.9pt);
	\draw[dashed,color=gray] (15,7) arc (-90:90: 1 and 1.5); 
\draw[dashed] (20,11.8) circle [x radius=3.5cm, y radius=2.8cm];
\draw[dashed] (20,8.1) circle [x radius=3.5cm, y radius=2.8cm];
\draw[dashed] (25.7,10) circle [x radius=3.5cm, y radius=2.8cm];
\draw (18.5,13) node {Milnor };
\draw (19.9,12) node {number $\leq$ 21};
\draw (19,8.3) node {isolated};
\draw (20,7.3) node {double points};
\draw (26.3,10.5) node {minimal};
\draw (26.3,9.5) node {elliptic sing.};
\draw (26,5) node {stable locus};
%\path[->] (26,5) edge [bend right] (25.5,6.5);
\draw (11,10) node {$\dim (\Lambda_{3})=0$}; 
	\draw[dashed,color=gray] (15,10) arc (-90:90: 1 and 1.5); 
\draw (11,14) node {$\dim (\Lambda_{4})=1$};
	
	\draw[very thick] (15,13) -- (15,16);
	\draw[very thick] (30,10) ellipse (0.5 and 3); 
	\draw (34,10) node {$\dim (\Lambda_1)=6$};
	\filldraw[gray] (30,10) ellipse (0.5 and 3);
\end{tikzpicture}
 \caption{GIT quotient space of quintic surfaces}
\label{fig:GIT}
\end{figure}
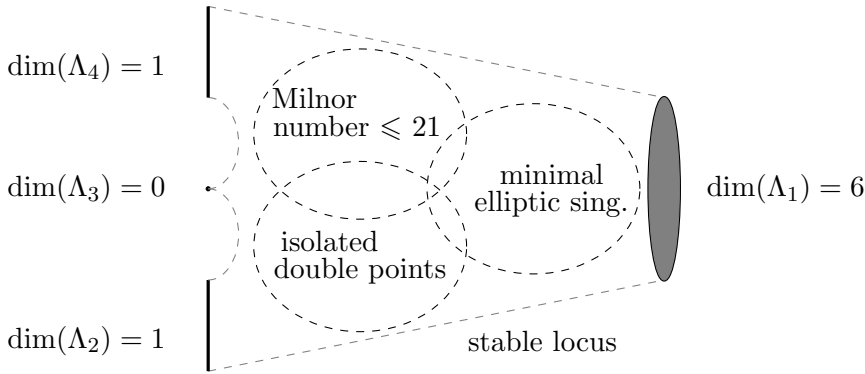
For strictly semi-stable surfaces, we focus only in describing the ones with minimal closed orbits.  We show that the strictly semi-stable locus in the quotient, which we call the GIT boundary, is a union of four disjoint irreducible component $\Lambda_i$
 of dimension $6$, $1$, $0$, and $1$, respectively.
The stabilizer of any GIT semi-stable quintic surface is either equal or contained in a
$\SL(2, \mathbb{C})$ (see Corollary \ref{cor:mxsblz}).  
The generic quintic surface parametrized by the boundary component $\Lambda_1$ is normal, and it has two isolated singularities of multiplicity $3$, geometric genus $3$, modality $7$, and Milnor number $24$.  The generic quintic surface parametrized by the $\Lambda_2$ component is singular along three lines supporting two non-isolated triple point singularities. The  generic quintic surfaces parametrized by the components $\Lambda_{3}$ and $\Lambda_{4}$  are singular along double lines, and they also have  isolated singularities of multiplicity $3$, geometric genus $2$, modality $5$, and Milnor number $24$ and $22$, respectively (see Section \ref{localGIT}).
Strictly semi-stable quintic  surfaces that decompose as a union of a quartic surface and a hyperplane are described in Proposition \ref{41boundary}. 
The only non-reduced  semi-stable quintic surface is a union of a double smooth quadric surface and a hyperplane intersecting along a smooth conic (see Corollary \ref{2qh}). 

\subsection{Organization} 
In Section \ref{sec:GIT}, we present the combinatorial side of the GIT analysis. In particular, we list the critical one-parameter subgroups,  and we give a  description of 
non-stable 
surfaces. In Section \ref{sec:minimalorbit}, we study the strictly semi-stable minimal orbits associated to the GIT compactification.  A more geometric interpretation of the failure of stability in normal surfaces is described in Section \ref{isosing}.  The stability of quintic surfaces with non-isolated singularities is discussed in Section \ref{sec:nonisolatedsing}.   In some cases, we encounter routine computations better done with the help of a computer.  The online notebooks are available at the accompanying website \cite{Website}.

\subsection{Related Work}
This work  fits in a series of GIT constructions including  Shah  \cite{shah4}, 
Laza \cite{laza4}, Yokoyama \cite{yokoyama}, Fedorchuck and Smyth \cite{fedorchuk2013stability}, Lakhani 
\cite{lakhani}
 and Swinarski \cite{swinarski2012git}.  For analyzing the singularities, we benefited from the work of Laufer \cite{laufer},  Prokhorov \cite{prokhorov}, Arnold \cite{arnoldinvetiones}, and others  \cite{m6}, \cite{m5}, \cite{watanabeyoshi}.  Quintic surfaces of general type were also studied by Yang \cite{yang}.  We  used the software Sage \cite{sage}  and Macaulay2 \cite{M2}; in particular, we use the  Macaulay2 package StatePolytope developed by D. Swinarski.  
 
\subsection{Notation}\label{notation}
The homogeneous coordinates are denoted as $[x_0:x_1:x_2:x_3]$.
The homogenous polynomials of degree $d$ are denoted as $f_d(x_0,x_1,x_2,x_3)$.     We work over the complex numbers.  We denote $p_i $  as the point $(x_j=x_k=x_l=0)$ with $i \neq j,k,l$;  we denote $L_{ij}$ as the line$(x_k=x_l=0)$ with $i,j \neq k,l$.
Unless otherwise indicated, whenever a polynomial occurs we suppose it  has generic coefficients. However, it is written without non-zero coefficients.  For example, $c_ix_i^2 + c_kx_k^2$ is written as $x_i^2+x_k^2$.
Furthermore, if we work at the completion of the local ring of a singularity, we do not write the coefficients whenever they are invertible elements. For example, $u(x,y,z)x^2 + v(x,y,z)y^3$ is written only as $x^2+y^3$ if $u(x,y,z)$ and $v(x,y,z)$ are invertible power series.  The equation of $X$ with respect to a given coordinate system is denoted 
as $F_X(x_0,x_1,x_2,x_3)$.  We denote $\Xi_{F_X}$ as its  set of non-zero monomials. 
Similarly, $V(I)$ denotes the zero set of an ideal $I$.  Given a point $ p \in X$, we refers to its projectivized tangent cone  as tangent cone. Our computational framework follows that of Mukai \cite[Sec 7.2]{mukai}.  

\subsection{Acknowledgements} \label{ack} 
I am grateful to my advisor, Radu Laza, for his guidance and help.  I am also grateful to the 
referee for the  detailed comments, corrections, and ideas to simplify some results.  While writing this paper, I have benefited from discussions with V. Alexeev, D. Jensen, J. Rana, J. Tevelev, B. Hassett, A. Sharland, and E. Rosu.   I am also grateful to the CIE and the Mathematics Department at Stony Brook University  for their support. The author was partially supported by  the NSF grant DMS-125481 (PI: R. Laza), and  the W. Burghardt Turner Fellowship.

\section{Geometric Invariant Theory Analysis}
\label{sec:GIT}
Geometric invariant theory provides a standard way to compactify some moduli spaces. 
In particular, the moduli of smooth quintic surfaces is an open  subset of the GIT compactification: 
$$
\overline{\mathcal{M}}^{\scriptscriptstyle{GIT}}_5 = \mathbb{P} \Big( \operatorname{Sym}^5 \big( H^0 \left(\mathbb{P}^3 , \mathcal{O}_{\mathbb{P}^3} (1) \right) \big) \Big)^{ss} 
\gquot
\SL(4, \mathbb{C}).
$$
The stability of a given surface $(F_X=0)$ is determined by using the Hilbert-Mumford numerical criterion.  Let $\lambda$ be a non-trivial  one-parameter subgroup $\lambda \left( t \right) : \mathbb{G}_m \to  \SL \left( 4, \mathbb{C} \right)$.  
The Hilbert-Mumford numerical  function can be 
defined as (for details see \cite[Ch. 9]{dolgachev2003lectures})
\begin{align}\label{po} 
\mu( \lambda, X) & =
\min \lbrace 
\lambda.m_k \; | \; m_k \in \Xi_{F_X}
\rbrace.
\end{align}
A non-trivial 1-PS $\lambda$ is called normalized if it has the form
$$
\lambda = \operatorname{diag} \left( t^{a_0} ,t^{a_1},t^{a_2} , t^{a_3} \right) \text{ with }
a_0 \geq a_1 \geq a_2 \geq a_3 \; \; \text{ and } \; \; a_0 + a_1+a_2 + a_3 = 0.
$$
We assume that our one-parameter subgroups are  normalized. 
This is possible because any 1-PS  is conjugated to a normalized one. 
The Hilbert-Mumford numerical criterion (see \cite[Th. 9.1]{dolgachev2003lectures}) implies that a quintic surface is stable (resp. semi-stable) if and only if for every  normalized $\lambda$ it holds
that $\mu(\lambda, X) < 0$ (resp. $ \leq 0$).

The normalized one-parameter subgroups induce a partial order among the monomials. Indeed, given two monomials $m$, $m'$, then $m \geq m'$ if and only if
$\lambda.m \geq \lambda.m'$ for every normalized 1-PS $\lambda$ (see \cite[Eq 7.11]{mukai}). 
From the definition of the numerical criterion, the minimal monomials in a configuration $\Xi_{F_X}$ are the ones that determine the sign of 
$\mu(\lambda,X)$; and  $X$ is a non-stable surface if and only if there exists a coordinate system and at least one normalized parameter subgroup 
$\lambda= (a_0,a_1,a_2,a_3)$ such that its associated set of monomials $\Xi_{F_X}$ is contained in
$$ 
M^{\oplus}( \lambda ) 
:= 
\left\{
x_0^{i_0 }x_1^{i_1} x_2^{i_2} x_3^{i_3} \; | \; a_0 i_0 + a_1 i_1+a_2 i_2+ a_3 i_3 \geq 0,
\; \; \; 
i_0+i_1+i_2+i_3=5, \; \; i_k \geq 0
\right\}.
$$
For the analysis of stability, it suffices to consider the maximal sets $M^{\oplus}( \lambda ) $ with respect to the inclusion. We call them \emph{maximal non-stable configurations}, and they are determined by a finite list of 1-PS that we call \emph{critical one-parameter subgroups}. 
\begin{prop}
\label{prop:maximalsemistableops}
A quintic surface $X$ is non-stable if and only if for a  choice of a coordinate system its monomial configuration $\Xi_{F_X}$ is contained in 
$ M^{\oplus} \left( \lambda_i \right)$ for one of the following 1-PS:
\begin{align*}
&\lambda_1 = \left( 1, 0, 0,-1 \right) & 
& \lambda_2 = \left( 2, 1, -1, -2 \right) &
&\lambda_3 = \left( 4, 2, -1, -5 \right) & 
\\
&\lambda_4 =\left( 2, 1, 0, -3 \right) &
& \lambda_5= \left( 3, 0, -1, -2 \right) &
& \lambda_6 = \left(5, 1, -2, -4 \right) & \\
& \lambda_{7} = (2,1,1,-4)&
& \lambda_8= \left( 2, 2, -1, -3 \right) &
& \lambda_{9} =(7,1,-4,-4) & 
& \lambda_{10} = (8,-1,-2,-5)
\end{align*}
Furthermore, if for a choice of coordinates $\Xi_{F_X} \subseteq M^{\oplus} \left( \lambda_i \right)$ for $i \geq 7$, then $X$ is unstable. 
\end{prop}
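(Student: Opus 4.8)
The plan is to recast the restated numerical criterion as a purely combinatorial problem about the configuration of the $56$ degree-five monomials in $x_0,\dots,x_3$, and then to enumerate the finitely many maximal non stable configurations. First I would record the easy implication: if $\Xi_{F_X}\subseteq M^{\oplus}(\lambda_i)$ then $\lambda_i.m\ge 0$ for every $m\in\Xi_{F_X}$, so $\mu(\lambda_i,X)\ge 0$ and $X$ is not stable by the criterion. For the converse, if $X$ is not stable there is a normalized $\lambda$ with $\Xi_{F_X}\subseteq M^{\oplus}(\lambda)$; since every $M^{\oplus}(\lambda)$ is contained in a maximal one with respect to inclusion, it suffices to show that the maximal configurations are exactly the $M^{\oplus}(\lambda_i)$, $i=1,\dots,10$. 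Thus the whole first assertion reduces to the enumeration of maximal configurations, that is, of the critical one parameter subgroups.

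For the enumeration I would use the polyhedral picture. Identify each monomial $x_0^{i_0}\cdots x_3^{i_3}$ with the lattice point $(i_0,\dots,i_3)$ of the simplex $\{\sum i_k=5,\ i_k\ge 0\}$, and each normalized $\lambda$ with a linear functional on it. Because $\sum a_k=0$, the separating hyperplane $\{\lambda.m=0\}$ always passes through the barycenter $(5/4,\dots,5/4)$, so the admissible weights (those in the chamber $a_0\ge a_1\ge a_2\ge a_3$, taken modulo positive scaling) form a triangle with extremal rays $(3,-1,-1,-1)$, $(1,1,-1,-1)$ and $(1,1,1,-3)$. Each monomial $m$ defines a wall $\{\lambda:\lambda.m=0\}$, and the arrangement of these walls subdivides the triangle; on an open cell $M^{\oplus}(\lambda)$ is constant and equals $\{m:\lambda.m>0\}$, while moving to a vertex of the arrangement adjoins to this set exactly the monomials lying on the concurrent walls. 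Hence the maximal configurations are attained precisely at the vertices of the wall arrangement, together with the chamber boundary, where $\{\lambda.m=0\}$ meets the largest number of monomials. I would enumerate these vertices, read off the corresponding $\lambda$ (cleared to integral normalized weights), and discard any configuration properly contained in another; this finite computation, carried out with the \texttt{StatePolytope} routines, yields the ten subgroups $\lambda_1,\dots,\lambda_{10}$ and the data of Table~\ref{cr1ps}.

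For the \emph{furthermore} clause I would argue by a small perturbation of the weight. Fix $i\ge 7$ and let $B_i=\{m:\lambda_i.m=0\}$ be the set of monomials on the boundary hyperplane. The key point, which I would check case by case for $i=7,8,9,10$, is that $B_i$ lies strictly on one side of some normalized direction $\mu'$, i.e. $\mu'.m>0$ for every $m\in B_i$; for instance for $\lambda_7=(2,1,1,-4)$ one has $B_7=\{x_1^px_2^qx_3:p+q=4\}$ and $\mu'=(-1,0,0,1)$ works. Given such a $\mu'$, set $\lambda'=\lambda_i+\varepsilon\mu'$. Off-boundary monomials of $M^{\oplus}(\lambda_i)$ satisfy $\lambda_i.m\ge 1$ since the weights are integral, so for all small $\varepsilon>0$ they still satisfy $\lambda'.m>0$, while each boundary monomial now has $\lambda'.m=\varepsilon\,\mu'.m>0$. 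Keeping $\varepsilon$ small enough also keeps $\lambda'$ in the chamber, so $\lambda'$ is an admissible one parameter subgroup with $\mu(\lambda',X)=\min_{m\in\Xi_{F_X}}\lambda'.m>0$ for every $X$ with $\Xi_{F_X}\subseteq M^{\oplus}(\lambda_i)$; such an $X$ is therefore unstable. For $i\le 6$ no separating $\mu'$ exists, reflecting that those configurations carry strictly semistable surfaces; the contrast is exactly that the minimal orbit of $\lambda_i$ is unstable precisely when $i\ge 7$.

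I expect the main obstacle to be the completeness of the enumeration in the second paragraph: one must be certain that no maximal configuration is overlooked and that each vertex of the wall arrangement has been correctly matched to a genuinely maximal, not merely locally maximal, set. Organizing and certifying this finite but sizeable case analysis, ideally with the computer algebra packages cited, is where the real work lies; by comparison the reduction in the first paragraph and the perturbation argument for the unstable subgroups are routine.
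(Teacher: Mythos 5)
Your proposal is correct, and its core coincides with the paper's proof: both reduce the proposition to a finite, computer-assisted enumeration of critical one parameter subgroups carried out with the \texttt{StatePolytope} package. Indeed, your polyhedral reduction (critical $\lambda$'s occur at the vertices of the arrangement cut out in the normalized weight chamber by the walls $\{\lambda : \lambda.m=0\}$) is exactly the dual of the paper's first observation, which asks for two monomials $m_1,m_2$ with $\lambda.m_1=\lambda.m_2=\lambda.0_d=0$, i.e.\ that the separating hyperplane through the centroid contain two monomials; both produce the same finite candidate list. Where you genuinely diverge is in the two supporting steps. For completeness, the paper bounds the integral weights of any candidate $\rho$ by $|a_i|<3\cdot 5^3$ and then verifies $M^{\oplus}(\rho)\subset M^{\oplus}(\lambda_k)$ via Mukai's criterion, whereas you obtain completeness from the compactness of the chamber (the closure of every cell of the wall arrangement contains a vertex), which is cleaner and avoids the explicit bound. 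For the \emph{furthermore} clause, the paper delegates the instability of $M^{\oplus}(\lambda_i)$, $i\ge 7$, to the combinatorial computation (cf.\ Remark \ref{rmku}), while your separating-direction perturbation makes it explicit and hand-checkable; moreover it does go through in the three cases you deferred: $\mu'=(1,1,-1,-1)$ works for $\lambda_8$, where $B_8=\{x_0^{i_0}x_1^{i_1}x_3^2 : i_0+i_1=3\}$, and for $\lambda_9$, where $B_9=\{x_1^4x_2,\,x_1^4x_3\}$, and $\mu'=(2,1,0,-3)$ works for $\lambda_{10}$, where $B_{10}=\{x_0x_2^4,\,x_0x_1^3x_3\}$; in each case $\lambda_i+\varepsilon\mu'$ stays in the chamber for small rational $\varepsilon>0$. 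In summary: the paper's version is the more directly implementable recipe, while yours buys conceptual transparency, a self-contained completeness argument, and an instability proof for $i\ge 7$ that does not rely on software.
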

\begin{proof}
Only finitely many configurations of monomials are relevant for the GIT analysis. To find them, with the aid of a computer program (see \cite{Website}), we list all the configurations, and we identify the maximal ones. 
The computation complexity is greatly reduced by using two basic
observations: First, it suffices to consider the configurations associated to 
$M^{\oplus} \left( \lambda \right)$ where $\lambda$ is such that there exist distinct monomials $m_1$, $m_2$ satisfying $\lambda.m_1=\lambda.m_2$. Second, a configuration is characterized by its set of minimal monomials with respect to the previously defined partial order.   We also ensure our list of critical 1-PS is complete. Indeed, by examining the equation
$\lambda.m_1=\lambda.m_2$ with $\lambda=(a_0,a_1,a_2,a_3)$   is clear that $|a_i|<3(5)^3$ with $a_i \in \mathbb{Z}$. By using criterion \cite[Prop. 7.19]{mukai}, we confirm that $M^{\oplus}(\lambda) \subset M^{\oplus}(\lambda_k)$ for every $\lambda$ of that form.  Our implementation of the algorithm to find the maximal sets $M^{\oplus}(\lambda_k)$  and to ensure our list of critical 1-PS is complete follows similar cases in the literature (e.g. \cite{laza4},\cite{lakhani}).   Finally, the generic configuration associated to each $M^{\oplus}(\lambda_k)$ can be either semi-stable or unstable depending on the presence or absence of the point $\left( \frac{5}{4}, \frac{5}{4}, \frac{5}{4},\frac{5}{4} \right)$ in the convex hull spanned by the monomials in $M^{\oplus}(\lambda_k)$. We distinguish the semi-stable configurations from the unstable  configurations by using the Macaulay2 package StatePolytope developed by D. Swinarski.
\end{proof}
%%%%%%%%%%%

For each $k = 1, \ldots, 10$, our goal is to describe the geometric properties of surfaces $X$ such that $\Xi_{F_X} \subset M^{\oplus} \left( \lambda_k \right)$. 
Our first step is to recall  that each normalized $\lambda$ acts on the vector space 
$W:=H^0(\mathbb{P}^3, \mathcal{O}_{\mathbb{P}^3}(1))$, determining a weight decomposition $W =\oplus_{s}W_s$. This decomposition induces a (partial) flag of subspaces $(F_n)_m := \oplus_{s \leq m} W_s \subset W$ that  determines a (partial) flag 
$
(F_n)_{\lambda}:=p_{\lambda} \subset L_{\lambda} \subset H_{\lambda} \subset \p{3}$. 
For instance, in our coordinate system if the normalized $\lambda$ has different weights $a_i$, the flag $(F_n)_{\lambda}$ is:
$$
\big( p_{\lambda}:= [0:0:0:1] \big)
\in 
\big( L_{\lambda}:=V(x_0, x_1) \big) \subset 
\big( H_{\lambda}:=V(x_0) \big).
$$
We say that $(F_n)_{\lambda}$ is a \emph{bad flag for the surface $X$ with respect to $\lambda$}, if  $\mu(\lambda, X) \geq 0$. Next, we describe the singularities of $X$ singled out by their bad flag with respect to  $\lambda_k$.

\begin{table}[h]
\centering
\caption{ 
Singularities  of a generic 
surface $X$ such that $\Xi_{F_X} \subseteq M^{\oplus} \left( \lambda_k \right)$
(Propositions \ref{prop:GIT} and \ref{prop:unstableGIT})
}
\begin{tabular}{ | p{2cm}| p{11cm} | }
\hline
1-PS& Associated Geometric Characteristics \\ \hline
$\lambda_1$, $\lambda_3$, $\lambda_4$ 
& 
Isolated triple point singularity with non-reduced tangent cone
\\ \hline
$\lambda_7$& 
Isolated ordinary quadruple point singularity
\\ 
\hline
$\lambda_2$, $\lambda_6$, $\lambda_8$
& 
Double line of singularities supporting a non-isolated triple point
\\ \hline
$\lambda_5$, $\lambda_9$& 
Double line of singularities with a distinguished double point
\\ 
\hline
$\lambda_{10}$& 
A union of a quartic surface and a hyperplane
\\
\hline
\end{tabular}\label{cr1ps}
\end{table}

\begin{prop}\label{prop:GIT}
Let $X$ be a quintic surface, and let $\Delta$ be its singular locus. If $X$ is a strictly semi-stable quintic surface with isolated singularities, then
\begin{enumerate}[label*=\arabic*.]
\item % 1,0,0,-1 
$\Delta$ contains a triple point singularity $p \in X$ whose tangent cone is a union of a double plane $H^2$ and another plane. The intersection multiplicity of the surface with any line in $H$ containing the triple point is five.
\item %4,2,-1,-5
$\Delta$ contains a triple point singularity $p \in X$ whose tangent cone is a union of a double plane $H^2$ and another plane intersecting $H$ along a line $L$ which is contained in $X$. The intersection of the hyperplane $H$ with the surface $X$ is a union of a double line $L^2$ and a nodal cubic plane curve such that the double line is tangent to the cubic curve at the node.
\item % 2,1,0,-3 
$\Delta$ contains a triple point singularity $p \in X$ whose tangent cone is a triple plane $H^3$. The quintic plane curve obtained from the intersection of the surface $X$ with $H$ has a quadruple point whose tangent cone contains a triple line.
\end{enumerate}
If $X$ is an irreducible strictly semi-stable quintic surface with non-isolated singularities, then 
\begin{enumerate}[label*=\arabic*.] \setcounter{enumi}{3}
\item % 3,0,-1,-2
$\Delta$ contains a double line $L^2$ supporting a special double point whose tangent cone is $H^2$. At the completion of the local ring, the equation associated to the double point has the form (see Notation \ref{notation})
$$
x^2 + y^2z^2f_2(y,z^2)+y^5.
$$
The intersection of $X$ with $H$ is a quintuple line supported on $L$. 
\item % 2,2,-1,-2
$\Delta$ contains a double line $L^2$ supporting a special triple point $p \in X$. 
The tangent cone of the triple point is a union of three planes intersecting along $L$. At the completion of the local ring, the equation associated to the triple point has the 
following form
$$
xf_2(x,y)+y^3z+y^4+x^2z^2+xyz^3
$$
The intersection of the surface with one of the above hyperplanes $H$  is a union of a conic and a transversal triple line supported on $L$. 
\item %5,1,-2,-4
$\Delta$ contains a double line $L^2$ supporting a special triple point whose tangent cone is a union of a double plane $H^2$ and another plane. 
At the completion of the local ring, the equation associated to the triple point has the following form
$$
x^2y+x^4+y^4+x^3z+x^2z^2+xy^3+xy^2z+xyz^3.
$$
The intersection of the surface with the hyperplane $H$ is a union of a quadruple line supported on $L$ and another line.
\end{enumerate}
%\end{enumerate}
\end{prop}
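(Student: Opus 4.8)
The plan is to run the numerical criterion case by case over the six critical subgroups that survive for a semistable surface. By Proposition \ref{prop:maximalsemistableops} and Table \ref{cr1ps}, a strictly semistable $X$ is non-stable but not unstable, so in suitable coordinates $\Xi_{F_X}\subseteq M^{\oplus}(\lambda_i)$ for exactly one $i\in\{1,\dots,6\}$, the destabilizing subgroup being the canonical worst $1$-PS of \cite[Thm 3.4]{kempf}. The first step is the coarse dichotomy underlying the two halves of the statement: for $i\in\{2,5,6\}$ every monomial of $M^{\oplus}(\lambda_i)$ lies in $(x_0,x_1)^2$ --- a one-line check on the defining inequality, e.g.\ $9i_0+5i_1+2i_2\ge 20$ after dehomogenizing $\lambda_6$ at $x_3=1$ --- so $F_X\in(x_0,x_1)^2$ and $X$ is singular along the whole line $L:=V(x_0,x_1)$, whereas for $i\in\{1,3,4\}$ there are monomials with $i_0+i_1=1$ and no line of singularities is forced. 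This pairs the destabilizers with the items: $\lambda_1,\lambda_3,\lambda_4$ with the isolated cases $1$--$3$, and $\lambda_5,\lambda_2,\lambda_6$ with the non-isolated cases $4$--$6$.

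Most of the asserted data is then read directly off the inequality in the chart $x_3=1$, with $p_\lambda=[0:0:0:1]$ and $H=V(x_0)$. For each $\lambda_i$ the multiplicity at $p_\lambda$ is $5-\max\{i_3\}$, giving $3$ for $\lambda_1,\lambda_2,\lambda_3,\lambda_4,\lambda_6$ and $2$ for $\lambda_5$; the projectivized tangent cone is the lowest-degree part, which comes out as $x_0^2\ell$ with $\ell$ generic for $\lambda_1$ and $\lambda_6$ (double plane plus a plane), as $x_0^2(c_0x_0+c_1x_1)$ for $\lambda_3$ (the residual plane now meets $H$ along $L$, and a separate check shows $F_X|_L=0$, so $L\subset X$), as the triple plane $x_0^3$ for $\lambda_4$, as $x_0^2$ for $\lambda_5$, and as $x_0 f_2(x_0,x_1)$ for $\lambda_2$ (three planes through $L$). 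The hyperplane section $X\cap H$ is obtained by setting $i_0=0$: for $\lambda_1$ only $i_3=0$ survives, so the section is $f_5(x_1,x_2)$ and every line of $H$ through $p_\lambda$ meets $X$ to order five; for $\lambda_3$ every surviving monomial carries $x_1^2$, factoring the section as $x_1^2\cdot(\text{cubic})$ with the cubic nodal at $p_\lambda$ and $L$ one of its two nodal tangents; for $\lambda_4$ the section is a quintic with a quadruple point whose degree-$4$ part is $x_1^3(x_1+x_2)$; and for $\lambda_5,\lambda_2,\lambda_6$ the sections are, respectively, the quintuple line $x_1^5$, a transversal triple line plus a conic ($x_1^3\cdot(\text{conic})$ meeting $L$ in two points), and a quadruple line plus a residual line ($x_1^4\cdot(\text{linear})$). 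These observations already give items $1$--$3$ in full, since those assert nothing beyond the tangent cone and the intersection behaviour on $H$.

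The genuine work is to upgrade the tangent-cone data to the analytic normal forms of items $4$--$6$. For $\lambda_5$ (item $4$) the coefficient of $x_0^2$ is a unit (the class of $x_0^2x_3^3$ is generically nonzero), so the splitting lemma applies: completing the square removes every $i_0\ge 1$ term and leaves $x_0^2+\big(C-\tfrac{B^2}{4A}\big)$ with $C=x_1^5$ the unique $i_0=0$ contribution and $B\in(x_1,x_2)^3$; tracking the leading survivor $x_1^5$ and absorbing the degree-$\ge 6$ remainder by analytic changes of $(x_1,x_2)$ should produce $x^2+y^2z^2f_2(y,z^2)+y^5$, with $y^5$ non-removable because it is the only monomial avoiding $x_0$. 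The triple-point forms for $\lambda_2$ and $\lambda_6$ are the crux: here the coefficient of $x_0^2$ vanishes to order one, so no square can be split off, and reaching $xf_2(x,y)+y^3z+y^4+x^2z^2+xyz^3$, respectively the longer form for $\lambda_6$, requires Tschirnhaus-type changes that clear the inessential monomials of $M^{\oplus}(\lambda_i)$ while preserving the listed ones. I expect this last bookkeeping --- proving exactly which monomials can be eliminated and verifying that the stated terms survive for generic coefficients --- to be the main obstacle; by contrast the multiplicities, tangent cones, and hyperplane sections are immediate consequences of the defining inequalities.
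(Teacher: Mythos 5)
Your proposal follows essentially the same route as the paper's proof: reduce via Proposition \ref{prop:maximalsemistableops} to the six critical one-parameter subgroups, pair $\lambda_1,\lambda_3,\lambda_4$ with the isolated cases and $\lambda_5,\lambda_2,\lambda_6$ with the non-isolated ones, and read multiplicities, tangent cones, and hyperplane sections off the generic equations --- all of which agrees with the paper's equations $F_{\lambda_1},\dots,F_{\lambda_6}$ and its case-to-subgroup assignment. The normal-form step you flag as the remaining obstacle is treated no more explicitly in the paper, which simply localizes the listed equations at $p_3$ and suppresses unit coefficients per the convention of Section \ref{notation}; your completing-the-square computation for $\lambda_5$ is precisely that calculation, so the proposal is correct and matches the paper's level of detail.
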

\begin{proof}
We suppose the quintic surface is strictly semi-stable. By Proposition \ref{prop:maximalsemistableops}, we only need to find the geometric characterization 
of the quintics defined by  the equations $F_{\lambda_i}$ for 
$ 1 \leq i \leq 6$.  The statement describes 
the intersection of these surfaces with the corresponding flag  $(F_n)_{\lambda_i}$.
We order the cases as in the statement.  
The equation associated to the first case is
$$
F_{\lambda_1}= x_3^2 x_0^2 f_1(x_0,x_1,x_2)+ x_3 x_0 f_3(x_0,x_1,x_2)+ f_5(x_0,x_1,x_2).
$$
The equation associated to the second case is
\begin{align*}
F_{\lambda_3}= 
x_3^2 x_0^2 f_1(x_0 , x_1 ) +
x_3(x_0^2x_2^2+x_2f_3(x_0,x_1)+f_4(x_0,x_1) )
+ x_1^2 f_3 (x_1 , x_2 ) + x_0 f_4(x_0,x_1,x_2).
\end{align*}
The equation associated to the third case is
\begin{align*}
F_{\lambda_4} &= 
x_3^2 x_0^3 + x_3 x_1^3f_1(x_1 , x_2 ) + x_3x_0^3 h_1(x_0,x_1,x_2)
+ x_3 x_0^2 f_2 (x_1 , x_2 ) 
\\
& \qquad{} + x_3 x_0 x_1 g_2 (x_1 , x_2 ) + f_5(x_0,x_1,x_2).
\end{align*}
The equation  associated to the fourth case is
\begin{align*}
F_{\lambda_5} &=
x_3^3 x_0^2 + x_3^2 x_0^2 f_1(x_0,x_1,x_2) + 
x_3x_0^2 f_2(x_0,x_1,x_2)
+ x_3 x_0 x_1^2 f_1 (x_1 , x_2 )
+ x_0^2 f_3(x_0,x_1,x_2)
\\
& \qquad
+ x_0 x_1 g_3 (x_1 , x_2 ) + a_1 x_1^5.
\end{align*}
The equation associated to the fifth case is
\begin{align*}
F_{\lambda_2}
& =
x_3^2 x_0f_2(x_0 , x_1 ) + x_3x_1^3 f_1(x_1 , x_2 ) + x_3x_0x_1^2h_1(x_1 , x_2 )
+x_3x_0^2 g_2(x_0,x_1,x_2)
\\
& \qquad
+ x_0^2f_3(x_0,x_1,x_2) +
x_0 x_1g_3 ( x_1 , x_2 ) + x_1^3 h_2 (x_1 , x_2 ).
\end{align*} 
The equation associated to the last case is
\begin{align*}
F_{\lambda_6}
&=
x_3^2 x_0^2 f_1(x_0,x_1,x_2)+ x_3x_0^2 f_2(x_0,x_1,x_2)
+ x_3x_0 x_1^2 h_1(x_1 , x_2 )+ x_3x_1^4 
\\
& \qquad
+ x_1^4 g_1(x_1 , x_2 ) + x_0 x_1 f_3 (x_1 , x_2 ) + x_0^2 g_3(x_0,x_1,x_2).
\end{align*}
To find the local equations of the singularities we use an analytic change of coordinates as described by \cite[Sec 2.5]{kollar1998real}.
\end{proof}
Next we describe the main geometric characteristics of the surfaces destabilized by the critical 1-PS $\lambda_7$, $\lambda_8$, $\lambda_9$, and $\lambda_{10}$.
\begin{prop} \label{prop:unstableGIT}
Let $X$ be a quintic surface, and let $\Delta$ be its singular locus. 
Suppose that for some coordinate system $\Xi_{F_X} \subset M^{\oplus}(\lambda_k)$ with 
$k \geq 7$. Then $X$ is an unstable quintic surface, and one of the following cases holds:
\begin{enumerate}[label*=\arabic*.]
\item % 2,1,1,-4 
$\Delta$ contains an ordinary quadruple point.
\item % 2,2,-1,-3; 8
$\Delta$ contains a double line supporting a special triple point $p \in X$ whose tangent cone is a union of three concurrent hyperplanes intersecting along a line $L$. At the completion of the local ring, the equation associated to the triple point has the following form
$$
f_3(x,y)+y^2z^3+xyz^3+x^2z^3.
$$
The intersection of the surface with one of the above hyperplanes is a union of a cubic curve and a tangent double line supported at $L$.
\item % 7,1,-4,-4
$\Delta$ contains a double line supporting a special double point whose tangent cone is $H^2$. At the completion of the local ring, the equation associated to the double point has the form 
$x^2+y^4$.  
The intersection of the surface with $H$ is a union of a quadruple line supported on $L$ and another line.
\item % 8,-1,-2,-3
$X$ is a union of a smooth quartic surface and a hyperplane such that
\begin{itemize}
\item[*] The intersection of the hyperplane with the quartic surface is a quartic plane curve with a triple point whose tangent cone has a triple line $L^3$. 
\item[*] The intersection of the quartic surface with this line $L$ is a quadruple point. 
\end{itemize}
\end{enumerate}
\end{prop}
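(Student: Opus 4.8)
The plan is to mirror the structure of the proof of Proposition~\ref{prop:GIT}. I would not reprove instability, since Proposition~\ref{prop:maximalsemistableops} already records that a coordinate system with $\Xi_{F_X} \subseteq M^{\oplus}(\lambda_k)$ for $k \geq 7$ forces $X$ to be unstable; the remaining task is purely geometric, namely to identify the destabilizing configuration attached to each of the four unstable critical one parameter subgroups $\lambda_7 = (2,1,1,-4)$, $\lambda_8 = (2,2,-1,-3)$, $\lambda_9 = (7,1,-4,-4)$ and $\lambda_{10} = (8,-1,-2,-5)$. For each $k$ I would write the generic member $F_{\lambda_k}$ as the linear combination of all monomials of nonnegative $\lambda_k$-weight, locate the singularity singled out at the bottom of the bad flag $p_{\lambda_k} \subset L_{\lambda_k} \subset H_{\lambda_k}$, and reduce the dehomogenized local equation to the stated normal form using the conventions of Section~\ref{notation}.

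Going case by case, for $\lambda_7$ the weight inequality leaves only $F_{\lambda_7} = f_5(x_0,x_1,x_2) + x_3\, f_4(x_0,x_1,x_2)$; dehomogenizing at $p_3 = [0:0:0:1]$ the lowest order term is the quartic $f_4$, so $p_3$ has multiplicity four, and for generic coefficients the cone $V(f_4)$ is smooth away from its vertex, giving an ordinary quadruple point. For $\lambda_8$ and $\lambda_9$ the flag line $L_{\lambda_k} = V(x_0,x_1)$ carries a double line of $\Delta$, and at a distinguished point on it $X$ acquires the higher singularity; after dehomogenizing and absorbing invertible power series I expect the triple point of case~2 to take the form $f_3(x,y) + y^2z^3 + xyz^3 + x^2z^3$, whose leading cubic $f_3(x,y)$ is three planes through $L$, and the double point of case~3 to take the form $x^2 + y^4$, with tangent cone the double plane $H^2$. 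The hyperplane sections $X \cap H_{\lambda_k}$ are then obtained by setting the defining coordinate of $H_{\lambda_k}$ to zero and factoring the resulting quintic plane curve, producing the cubic-plus-tangential-double-line and the quadruple-line-plus-line of the statement. For $\lambda_{10}$ the decisive observation is that every monomial of nonnegative weight is divisible by $x_0$, so $F_{\lambda_{10}} = x_0\, q_4$ and $X = V(x_0) \cup V(q_4)$ splits as a hyperplane and a quartic; I would then study the plane quartic $V(q_4) \cap V(x_0)$ to exhibit its triple point with triple-line tangent cone and check that $V(q_4)$ meets $L_{\lambda_{10}}$ in a single quadruple point.

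The routine part is the weight bookkeeping that produces each $F_{\lambda_k}$ together with the elementary multiplicity counts. The main obstacle I anticipate is the reduction to normal form, especially for the triple point of case~2: verifying that after the admissible substitutions the completed equation is exactly $f_3(x,y) + y^2z^3 + xyz^3 + x^2z^3$, and that the leading cubic splits into three \emph{distinct} concurrent planes, requires careful tracking of which higher order monomials survive and which are swallowed by invertible units. A secondary subtlety is confirming that the genericity hypotheses do not degenerate, namely the smoothness of the quartic cone for $\lambda_7$, the tangency in the hyperplane sections, and the triple-line tangent cone of the plane quartic for $\lambda_{10}$; for these a short symbolic check in the spirit of the cited notebooks is the most efficient route.
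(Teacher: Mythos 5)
Your proposal is correct and follows essentially the same route as the paper's proof, which likewise treats instability as already settled by Proposition~\ref{prop:maximalsemistableops}, writes out the generic equations $F_{\lambda_k}$ for $k=7,\dots,10$, reads off the four destabilizing configurations with the same case-to-$\lambda_k$ assignment ($\lambda_7$ the quadruple point, $\lambda_8$ the non-isolated triple point, $\lambda_9$ the distinguished double point, $\lambda_{10}$ the hyperplane-plus-quartic), and leaves the local normal-form reductions to the conventions of Section~\ref{notation}. As a small bonus, your weight bookkeeping for $\lambda_7$, giving $x_3 f_4(x_0,x_1,x_2)+f_5(x_0,x_1,x_2)$, is the corrected version of the equation that the paper misprints as $x_3 f_3(x_0,x_1,x_2)+f_5(x_0,x_1,x_2,x_3)$.
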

\begin{proof} 
We write down the equations of surfaces destabilized by the critical 1-PS $\lambda_i$ with 
$i \geq 7$ and match them with the cases of the statement.
The generic equation associated to the first case is given by
\begin{align*}
F_{\lambda_7} = x_3f_4(x_0,x_1,x_2)+f_5(x_0,x_1,x_2,x_3).
\end{align*}
The equation associated to the second case is given by
\begin{align*}
F_{\lambda_8}&= x_3^2f_3(x_0,x_1) +x_3 (x_2f_3(x_0,x_1)+f_4(x_0,x_1))
+x_2^3f_2(x_0,x_1) +x_2f_4(x_0,x_1) +f_5(x_0,x_1).
\end{align*}
The equation associated to the third case is given by
\begin{align*}
F_{\lambda_9}&=
x_3^3 x_0^2 + x_3^2 x_0^2 f_1(x_0,x_1,x_2) + x_3^2 x_0x_1^2
+ x_3x_2^2x_0^2+x_3x_2x_0f_2(x_0,x_1)
\\
& \quad
+x_3f_4(x_0,x_1)
+ x_2^3x_0^2 +x_2^2x_0f_2(x_0,x_1) + x_2f_4 (x_0 , x_1 ) + f_5(x_0,x_1).
\end{align*}
The equation associated to the fourth case is given by
\begin{align*}
F_{\lambda_{10}} &= 
x_0 \left( 
x_3^3x_0+x_3^2x_0g_1(x_0,x_1,x_2)
+x_3x_2x_0f_1(x_0,x_1,x_2)+x_3f_3(x_0,x_1) 
+f_4(x_1,x_2,x_0) \right).
\end{align*}
\end{proof}
Next, we discuss some additional stability results.
\begin{cor}\label{convL1}
Let $X$ be a normal quintic surface with a triple point whose tangent cone is non-reduced. Let $\tilde{X} \to X$ be the monomial transformation of $X$ with its center at the triple point.  Then $\tilde{X}$ is non-normal if and only if there is a coordinate system such that $\Xi_{F_X} \subset \Xi_{F_{\lambda_1}}$. 
\end{cor}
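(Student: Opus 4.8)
The plan is to reduce both implications of the biconditional to a single local condition on the equation of $X$ at the triple point, namely divisibility of its degree-four part by a linear form cutting out the double plane, and then to translate that divisibility into the monomial condition $\Xi_{F_X}\subseteq\Xi_{F_{\lambda_1}}$ by a degree-by-degree inspection of $M^{\oplus}(\lambda_1)$.

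First I would fix coordinates adapted to the singularity: place the triple point at $p=[0:0:0:1]$ and dehomogenize by $x_3=1$, so that the local equation is $F=F_3+F_4+F_5$ with $F_d$ the degree-$d$ part in $(x_0,x_1,x_2)$. A nonreduced tangent cone means the cubic $F_3$ has a repeated factor, which must be linear, so after a linear change $F_3=x_0^2\,\ell$ for a linear form $\ell$; the distinguished double plane is $H=\{x_0=0\}$ (the triple-plane case is $\ell=x_0$). Here the monomial transformation $\tilde X\to X$ is the blow-up of $X$ at $p$ written in the standard monomial charts of $\mathrm{Bl}_p\mathbb{P}^3$, and I would use that $\tilde X$ is locally a hypersurface there, hence $S_2$; therefore $\tilde X$ is normal if and only if it is regular in codimension one, i.e. $\mathrm{Sing}(\tilde X)$ is finite.

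Next I would determine $\mathrm{Sing}(\tilde X)$. Away from the exceptional divisor $E$ the map is an isomorphism onto $X\setminus\{p\}$, which is smooth in codimension one because $X$ is normal, so no curve of singularities appears there. On $E$ the only candidate curves are the components of the projectivized tangent cone $\{x_0^2\ell=0\}$: the reduced line $\{\ell=0\}$ and the double line $\{x_0=0\}$. In the chart $x_0=y\xi,\ x_2=y\eta$ (so $E=\{y=0\}$) the strict transform is $\tilde F=\xi^2\ell(\xi,1,\eta)+yF_4(\xi,1,\eta)+y^2F_5(\xi,1,\eta)$, and along the double line $\{\xi=0,\,y=0\}$ one checks that $\tilde F,\partial_\xi\tilde F,\partial_\eta\tilde F$ vanish identically while $\partial_y\tilde F=F_4(0,1,\eta)$; in the chart adapted to $\{\ell=0\}$ the cubic $F_3$ vanishes with multiplicity one, so a transverse partial is a nonzero constant and $\tilde X$ is always smooth there. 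Hence $\tilde X$ acquires a curve of singularities — equivalently is non-normal — if and only if $F_4(0,1,\eta)\equiv 0$, that is $x_0\mid F_4$ (equivalently $F_4|_H\equiv 0$). This criterion is intrinsic once $p$ and $H$ are fixed, and it holds for every $X$ in the hypothesis, not only the generic one.

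Finally I would translate. Since $\lambda_1=(1,0,0,-1)$, a degree-five monomial $x_0^{i_0}x_1^{i_1}x_2^{i_2}x_3^{i_3}$ lies in $\Xi_{F_{\lambda_1}}=M^{\oplus}(\lambda_1)$ iff $i_0\ge i_3$; dehomogenizing at $p$, a local monomial of degree $d=5-i_3$ is admissible iff its $x_0$-exponent satisfies $i_0\ge 5-d$. Degree by degree this forces $x_0^2\mid F_3$ in degree three, $x_0\mid F_4$ in degree four, no constraint in degree five, and (the inequality being unsatisfiable) no monomials in degrees $\le 2$, consistently with $p$ being a triple point. Thus $\Xi_{F_X}\subseteq\Xi_{F_{\lambda_1}}$ for a suitable coordinate system is equivalent to ``$F_3=x_0^2\ell$ and $x_0\mid F_4$'', and since the nonreduced tangent cone makes the first condition automatic, it reduces exactly to $x_0\mid F_4$; combined with the previous paragraph this yields the asserted equivalence. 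I expect the main obstacle to be establishing completeness of the singular-locus analysis on $E$ uniformly over all (not merely generic) such surfaces — showing the double line is the sole possible curve of singularities and that $\{\ell=0\}$ never contributes one — together with correctly tracking the coordinate freedom allowed in the statement; the chart computations themselves are routine.
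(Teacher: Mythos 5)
Your proof is correct and takes essentially the same approach as the paper's: adapted coordinates at the triple point, identification of $\mathrm{Sing}(\tilde X)$ on the exceptional divisor with $\{x_0=0\}\cap\{F_4=0\}$ so that non-normality becomes exactly $x_0\mid F_4$, followed by a monomial comparison with $F_{\lambda_1}$. The only difference is that you carry out explicitly (blow-up chart computation, Serre's criterion, degree-by-degree inspection of $M^{\oplus}(\lambda_1)$) what the paper delegates to a citation of Yang's Prop.\ 4.2 and to the proof of its Proposition \ref{prop:GIT}.
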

\begin{proof}
We can select a coordinate system such that the triple point is supported at $p_3$ and its tangent cone  is $(x^2_0f_1(x_0,x_1,x_2)=0)$. 
In that case, the equation of the quintic surface can be written as:
$$
x_3^2 x^2_0f_1(x_0,x_1,x_2)+x_3f_4(x_0,x_1,x_2)+f_5(x_0,x_1,x_2).
$$
The singularities of $\tilde{X}$  are supported on the exceptional divisor. 
As found by \cite[Prop. 4.2]{yang}, this happens along the intersection of $(x_0=0)$ with $(f_4(x_0,x_1,x_2)=0)$, and the failure of normality of $\tilde{X}$ is equivalent to $x_0 \mid f_4(x_0,x_1,x_2)$.  
Therefore, $\tilde X$ is non-normal if and only if the equation of $X$ can be written as
$$
x_3^2 x^2_0f_1(x_0,x_1,x_2)+x_3x_0f_3(x_0,x_1,x_2)+f_5(x_0,x_1,x_2).
$$
The statement follows by inspecting the equation $F_{\lambda_1}$ in the proof of Proposition \ref{prop:GIT}.
\end{proof}
\begin{cor}
\label{cor:isolateddp} 
Let $X$ be a normal quintic surface 
such that each of its singularities is either an isolated double point  or
an isolated triple point  whose tangent cone is reduced.  Then $X$ is stable. 
\end{cor}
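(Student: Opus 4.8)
The plan is to prove the contrapositive by running the complete classification of non-stable configurations against the hypotheses on $X$. So suppose $X$ is a normal quintic surface whose only singularities are isolated double points and isolated triple points with reduced tangent cone, and assume for contradiction that $X$ is \emph{not} stable. By Proposition \ref{prop:maximalsemistableops}, after a change of coordinates the monomial configuration $\Xi_{F_X}$ is contained in $M^{\oplus}(\lambda_i)$ for one of $\lambda_1,\dots,\lambda_{10}$, and in particular $X$ is either strictly semistable or unstable. The geometric meaning of each such destabilizing $\lambda_i$ is recorded in Propositions \ref{prop:GIT} and \ref{prop:unstableGIT}, and my strategy is to show that every one of the ten listed configurations is incompatible with the three standing assumptions (normality, isolated singularities, and multiplicity at most three with reduced tangent cone at triple points), so that no admissible $\lambda_i$ can exist.

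First I would dispose of the strictly semistable case through Proposition \ref{prop:GIT}. Since $X$ is normal it is irreducible, and since its singularities are isolated, the non-isolated items (4)--(6) of that proposition — each of which exhibits a double line $L^2\subseteq\Delta$ and comes from $\lambda_5,\lambda_2,\lambda_6$ — cannot occur. The surviving items (1)--(3), coming from $\lambda_1,\lambda_3,\lambda_4$, each single out a triple point whose tangent cone is a double plane $H^2$ together with a further plane (cases 1 and 2) or a triple plane $H^3$ (case 3); in every instance the tangent cone is \emph{non-reduced}. This contradicts the hypothesis that each triple point of $X$ has reduced tangent cone. Next I would treat the unstable case through Proposition \ref{prop:unstableGIT}: item (1), the ordinary quadruple point coming from $\lambda_7$, is excluded because $X$ has multiplicity at most three at every point; items (2) and (3), from $\lambda_8$ and $\lambda_9$, again feature a double line and are excluded by isolatedness; and item (4), from $\lambda_{10}$, forces $X$ to be the union of a smooth quartic surface and a hyperplane, hence reducible, contradicting normality. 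Exhausting all ten critical $\lambda_i$ yields the contradiction and proves that $X$ is stable.

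It is worth noting how the double-point hypothesis fits in: a normal quintic whose only singularities are isolated double points has no triple point, no quadruple point, and is irreducible with isolated singular locus, so none of the ten configurations applies and stability is immediate. The one point that genuinely needs care, rather than mere bookkeeping, is that Propositions \ref{prop:GIT} and \ref{prop:unstableGIT} describe the generic member of each family $F_{\lambda_i}$, whereas the given $X$ may be a special member of $M^{\oplus}(\lambda_i)$. One must therefore check that specialization only worsens the destabilizing singularity at $p_{\lambda_i}$: for instance in the $\lambda_1$-case the degree-three part of the local equation at $p_{\lambda_1}$ is forced to be divisible by $x_0^2$, so it is either a genuine triple point with non-reduced tangent cone or a point of multiplicity at least four, and both alternatives are excluded by the hypotheses. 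The remaining isolated cases $\lambda_3,\lambda_4$ behave the same way.

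The main obstacle is thus conceptual rather than computational. The entire argument rests on the \emph{completeness} of the list of critical one parameter subgroups: I must be confident that a normal surface carrying only mild isolated singularities cannot slip past the classification through some unanticipated coordinate system. This is exactly what the exhaustiveness statement in Proposition \ref{prop:maximalsemistableops} supplies, so once that proposition together with the geometric dictionaries of Propositions \ref{prop:GIT} and \ref{prop:unstableGIT} is granted, the proof reduces to the case-by-case exclusion sketched above.
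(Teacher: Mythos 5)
Your proof is correct and follows exactly the route the paper intends: the paper states this corollary without a separate proof, treating it as an immediate consequence of Proposition \ref{prop:maximalsemistableops} together with the geometric dictionaries of Propositions \ref{prop:GIT} and \ref{prop:unstableGIT} (summarized in Table \ref{cr1ps}), which is precisely your case-by-case exclusion of all ten critical one parameter subgroups. Your extra check that a special (non-generic) member of $M^{\oplus}(\lambda_i)$ can only have a worse singularity at $p_{\lambda_i}$ --- e.g.\ for $\lambda_1$ the cubic part of the local equation is divisible by $x_0^2$, forcing either a non-reduced tangent cone or multiplicity at least four --- is a point the paper leaves implicit, and it is handled correctly.
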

\begin{proof}
It follows from Propositions \ref{prop:GIT} and \ref{prop:unstableGIT} because  the singularities of non-stable normal quintic surfaces are necessarily worse than
triple points with non-reduced tangent cone. 
\end{proof}
\begin{prop} \label{prop:tripleline}
Let $X$ be a quintic surface containing a line $L$ of singularities such that 
$mult_p(X) =3$ for all $p \in L$.  Then $X$ is unstable.
\end{prop}
\begin{proof}
We can suppose the triple line is supported at $(x_0=x_1=0)$.  Then the equation of $X$ can be written as
$$
g_2(x_0,x_1,x_2,x_3)x_0^3+f_2(x_2,x_3)x_0^2x_1+h_2(x_0,x_1,x_2,x_3)x_0x_1^2
+ p_2(x_1,x_2,x_3)x_1^3
$$
which is destabilized by $\lambda_{8}$ (see \cite{Website}). 
\end{proof}
\begin{prop}\label{prop:q4pt}
Let $X$ be a quintic surface with a singularity of multiplicity greater than or equal to four. Then $X$ is unstable.
\end{prop}
\begin{proof}
Suppose the quadruple point is supported at $p_3$. Then $X$ is destabilized by $ \lambda_7$.
\end{proof}

\begin{lemma}
\label{triplepointgenus}
Let $X$ be an irreducible quintic surface such that  its singular locus $\Delta$ 
 contains a  non-planar reduced curve $C$.  Then  
$\deg(C) \leq 6$.  Furthermore, if $X$ has at least one triple point singularity, then $C$ is either a twisted cubic, an elliptic quartic curve, or a degeneration of these. 
\end{lemma}
\begin{proof}
We apply the genus  formula  to the generic section of $X$ which is an irreducible plane quintic curve that cannot  have more than 6 double points.  Those double points are induced by $C$. Then, if $X$ is irreducible  the degree of $C$ is less than $6$.  The same argument applies if $X$ has a triple point. We take a section of $X$ through it, and we use that 
there is not a curve  of degree four and 
genus two  in $\mathbb{P}^3$.
\end{proof}

\begin{cor} \label{highergenus} 
Let $X$ be an irreducible quintic surface with a curve of singularities supported on a reduced curve $C$. Suppose the genus of $C$ is greater than one,  $C$ does not contain any line,
and $X$ does not have an additional line of singularities.
Then $X$ is stable. 
\end{cor}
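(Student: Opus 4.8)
The plan is to prove the contrapositive. Assuming $X$ is not stable, I will show that the one-dimensional singular locus $C$ must either contain a line or have genus at most one, either of which contradicts the hypotheses. By Proposition \ref{prop:maximalsemistableops}, non-stability means that in suitable coordinates $\Xi_{F_X}\subseteq M^{\oplus}(\lambda_i)$ for some $1\le i\le 10$. Since $X$ is irreducible I can discard $\lambda_{10}$ at once: by Proposition \ref{prop:unstableGIT} its equation satisfies $x_0\mid F_{\lambda_{10}}$, so the corresponding surfaces split as a hyperplane union a quartic and are reducible. Thus only $1\le i\le 9$ remain.

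First I would dispose of the subgroups $\lambda_2,\lambda_5,\lambda_6,\lambda_8,\lambda_9$, whose orbits consist, by Table \ref{cr1ps} together with Propositions \ref{prop:GIT} and \ref{prop:unstableGIT}, of surfaces carrying a double line of singularities $L$. Concretely, for each of these five subgroups a direct computation on the weights shows that no monomial of $M^{\oplus}(\lambda_i)$ survives the restriction $x_0=x_1=0$, even after applying any first-order partial derivative; equivalently, $F_X$ and every $\partial F_X/\partial x_j$ vanish identically along $L=V(x_0,x_1)$. Hence every $X$ with $\Xi_{F_X}\subseteq M^{\oplus}(\lambda_i)$ is singular along the whole line $L$, so $L\subseteq Sing(X)=C$, contradicting the assumption that $C$ contains no line.

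It remains to treat the \emph{isolated singularity} subgroups $\lambda_1,\lambda_3,\lambda_4,\lambda_7$, and this is where I expect the real work. Here the destabilizing singularity sits at the point $p_{\lambda}=p_3$ rather than along a line, so a priori $X$ could still be singular along some curve $C$. The geometric input I would exploit is that the plane section $X\cap H_{\lambda}$ with $H_{\lambda}=V(x_0)$ is extremely degenerate: using the generic-coefficient equations $F_{\lambda_1},F_{\lambda_3},F_{\lambda_4}$ of Proposition \ref{prop:GIT} and $F_{\lambda_7}$ of Proposition \ref{prop:unstableGIT}, the restriction $F_X|_{x_0=0}$ depends only on the variables picked out by the flag and factors into lines through $p_{\lambda}$ (possibly with a multiple line). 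I would then argue, case by case and localizing at $p_{\lambda}$ with the recorded tangent-cone data, that any one-dimensional component of $Sing(X)$ compatible with such a configuration is a line, a union of lines, or a plane curve of degree at most three, and hence either contains a line or has genus at most one.

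The main obstacle is precisely this last step for $\lambda_1,\lambda_3,\lambda_4,\lambda_7$: ruling out a high-genus, line-free curve of singularities. Note that both hypotheses are genuinely needed, and they partition the obstructions cleanly: the condition \emph{$C$ contains no line} kills single lines (as in the double-line cases) as well as unions of lines, which can have large arithmetic genus, while \emph{genus$(C)>1$} kills the low-genus irreducible curves — conics and cubics — that the degenerate hyperplane sections $X\cap H_{\lambda}$ would otherwise permit. Carrying out the case analysis of the singular loci of the equations above and invoking these two hypotheses yields the contradiction, and therefore $X$ is stable.
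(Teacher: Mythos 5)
Your treatment of six of the ten configurations is correct and matches what the paper needs: $\lambda_{10}$ is excluded by irreducibility, and for $\lambda_2,\lambda_5,\lambda_6,\lambda_8,\lambda_9$ the weight computation indeed shows that every monomial in $M^{\oplus}(\lambda_i)$ lies in $(x_0,x_1)^2$, so any such surface is singular along the line $V(x_0,x_1)$, contradicting the hypothesis on $C$. The genuine gap is in the four cases $\lambda_1,\lambda_3,\lambda_4,\lambda_7$, which you yourself flag as ``the real work'': there you never produce an argument, only the assertion that ``carrying out the case analysis \dots yields the contradiction.'' Moreover, the tool you propose is not adequate to close it. The degeneracy of the section $X\cap H_{\lambda}$ through the destabilizing point $p_{\lambda}=p_3$ places no constraint on one-dimensional components of $Sing(X)$ away from the flag: for instance, $\Xi_{F_X}\subset M^{\oplus}(\lambda_7)$ says only that $X$ has a point of multiplicity at least $4$ at $p_3$, and such a quintic can a priori carry a curve of double points (say, a twisted cubic) positioned freely in $\mathbb{P}^3$, invisible to any analysis of $F_X|_{x_0=0}$ and the tangent cone at $p_3$. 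So your claimed classification of the possible singular curves (``a line, a union of lines, or a plane curve of degree at most three'') is precisely the statement that needs proof, and the sketch gives no route to it.

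The idea you are missing is the one the paper's one-line proof invokes: Lemma \ref{triplepointgenus}. Each of the configurations $\lambda_1,\lambda_3,\lambda_4,\lambda_7$ forces a point of multiplicity at least $3$ on $X$ (immediate from the weights at $p_3$). Now take a generic plane section of $X$ through that point: since $X$ is irreducible, it is an irreducible plane quintic of arithmetic genus $6$ with one triple (or quadruple) point plus $\deg C$ further double points where the plane meets $C$. The genus count bounds $\deg C$, and since no space curve of degree at most $4$ has genus $2$, the curve $C$ must be a twisted cubic, an elliptic quartic, or a degeneration of these --- hence $C$ either has genus at most $1$ or contains a line, contradicting the hypotheses. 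This single synthetic argument disposes of all four hard cases at once; the paper then concludes exactly as you do in the easy cases: with points of multiplicity $\geq 3$ excluded, a destabilizer can only be $\lambda_5$ or $\lambda_9$, whose configurations are singular along a line.
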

\begin{proof}
Lemma \ref{triplepointgenus} and our hypothesis
about the genus of $C$ imply that $X$ has no triple point singularities.  Then $X$ is either stable or $\Xi_{F_X} \subset M^{\oplus}(\lambda_i)$ for $i \in \{5, 9 \}$  (see Table \ref{cr1ps}). However, this last case implies that $X$ contains a line of singularities.
\end{proof}

To decide the semi-stability of a quintic surface with a 
$ \operatorname{SL}(2, \mathbb{C})$-stabilizer, we make use of its symmetry to reduce the number of 1-PS for which we have to check the  Hilbert-Mumford numerical criterion (for a similar argument see \cite[Prop. 2.4]{alper2013finite}).
\begin{lemma}\label{sl2c}
Let $X$ be a quintic surface that decomposes in a union of a quartic surface and a hyperplane. Suppose there is a $\operatorname{SL}(2,\mathbb{C}) \subset Aut(X)$ action that fixes a smooth conic, $C$, on $X$. Then, there is a coordinate system $\{ x_i \}$ such that the equation associated to $X$ has the form
$$
x_1\big(
f_2(x_0,x_2,x_3)^2 + x_1f_3(x_0,x_1,x_2,x_3) 
\big)
$$
where $(x_1=f_2(x_0,x_2,x_3)=0)$ defines the invariant conic. Moreover, the quintic surface $X$ is semi-stable if and only if it is semi-stable with respect to every 1-PS acting diagonally on $\{ x_i \}$ and of the form 
$\lambda = \operatorname{diag} (a_0,a_1,a_2,a_3)$ with 
$a_0 \geq a_2 \geq a_3$.
\end{lemma}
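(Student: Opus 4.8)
The plan is to prove the two assertions separately: first the normal form of $F_X$, which is essentially representation theory of $SL(2,\mathbb{C})$, and then the reduction of the Hilbert--Mumford criterion to the displayed family of one parameter subgroups.

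\medskip

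\emph{Normal form.} Since $X = Q \cup H$ with $Q$ a quartic and $H$ a hyperplane, and $SL(2,\mathbb{C})$ acts preserving degrees, the two components are individually invariant; in particular the linear form cutting out $H$ is a semi-invariant, hence (as $SL(2,\mathbb{C})$ has no nontrivial characters) a genuine invariant. A smooth conic is a rational normal curve of degree two, so the induced action on $C \cong \p{1}$ is through $PSL(2,\mathbb{C})$ and the plane $\langle C\rangle$ it spans is a subrepresentation on which $C$ sits as the Veronese; this forces that three-dimensional piece to be the irreducible $V_2 = Sym^2$ and hence a splitting $H^0(\p{3},\mathcal{O}_{\p{3}}(1))^{\vee} = V_2 \oplus V_0$. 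Choosing coordinates adapted to this splitting makes $x_1$ span $V_0$ (so $H = (x_1=0)$) and $x_0,x_2,x_3$ a basis of $V_2$, with the invariant conic equal to the discriminant quadric $(x_1 = f_2 = 0)$. As $F_X = x_1 q_4$ and $x_1$ are invariant, so is $q_4$; grading $q_4$ by its $x_1$-degree and using that the only invariants in $Sym^{\bullet}V_2$ are powers of the discriminant $f_2$, I obtain $q_4 = c_0 f_2^2 + c_2 x_1^2 f_2 + c_4 x_1^4$. After rescaling (the reduced case, where $Q$ does not contain $H$, forces $c_0 \neq 0$) this is exactly $f_2^2 + x_1 g_3$, giving the claimed equation with $k=2$.

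\medskip

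\emph{First reduction of the $1$-PS.} The key point is that $X$ is fixed by the maximal torus $T_H \subset SL(2,\mathbb{C})$. By Kempf's theory of the canonical destabilizing one parameter subgroup \cite{kempf}---the optimal $1$-PS being fixed up to conjugacy by the stabilizer, hence choosable to commute with $T_H$, which is the mechanism behind \cite[Prop.~2.4]{alper2013finite}---an unstable $X$ is already destabilized by some $\lambda$ in the centralizer $Z := Z_{SL(4,\mathbb{C})}(T_H)$, so it suffices to test $1$-PS of $Z$. Since $T_H$ acts with weights $(2,0,0,-2)$ on $(x_0,x_1,x_2,x_3)$, one computes $Z = S\!\left(GL_1 \times GL_2 \times GL_1\right)$ acting on $\langle x_0\rangle \oplus \langle x_1,x_2\rangle \oplus \langle x_3\rangle$; in particular $Z$ contains the standard diagonal torus, and its only non-toral freedom is the $GL_2$ mixing $x_1$ and $x_2$.

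\medskip

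\emph{Reduction to the ordered diagonal family, and the main obstacle.} It remains to pass from $Z$ to diagonal $1$-PS in $\{x_i\}$ with $a_0 \ge a_2 \ge a_3$. I would diagonalize the $GL_2$-part of a given $\lambda \in Z$ and compare $\mu(\lambda, X)$ with the coordinate-diagonal $1$-PS carrying the same weights: expanding $F_X$ in a basis that mixes $x_1,x_2$ only introduces monomials of weight $\le$ those already present, so $g \mapsto \mu(\lambda, g\cdot X)$ is upper semicontinuous on the $GL_2$-orbit and attains its maximum at the coordinate-aligned surface; hence the mixing directions never create new instabilities and diagonal $1$-PS suffice. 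The symmetry $w \in N_{SL(2,\mathbb{C})}(T_H) \subset Aut(X)$, which exchanges $x_0 \leftrightarrow x_3$ and fixes $f_2$, then normalizes $a_0 \ge a_3$, and the weight $a_2$ of the middle vector $x_2 \in V_2$ is placed between them to give $a_0 \ge a_2 \ge a_3$; at this point semistability of $X$ collapses to a finite linear-programming question on the six explicit monomial weights of $F_X = x_1(f_2^2 + x_1 g_3)$. I expect the delicate step to be exactly this last passage: verifying rigorously that the $GL_2$-mixing $1$-PS are dominated by coordinate-diagonal ones (controlling possible cancellations in the monomial expansion) and pinning down the precise ordering, whereas the normal form and the reduction to $Z_{SL(4,\mathbb{C})}(T_H)$ are essentially formal once the representation $V_2 \oplus V_0$ has been identified.
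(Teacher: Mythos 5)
Your normal form argument is correct and is essentially the paper's (both rest on the splitting $W \cong Sym^2(V)\oplus Sym^0(V)$ induced by the conic and on the fact that the invariants of $Sym^{\bullet}(Sym^2 V)$ are generated by the discriminant quadric). The gap is in the second half. Reducing to the centralizer $Z:=Z_{SL(4,\mathbb{C})}(T_H)=S(GL_1\times GL_2\times GL_1)$ is a legitimate use of Kempf, but it is strictly weaker than what is needed, and your proposed completion does not close the distance. A destabilizing $1$-PS in $Z$ may be diagonal only in a basis $\{x_0,y_1,y_2,x_3\}$ with $y_1,y_2$ mixing $x_1,x_2$, and your transfer to a coordinate-diagonal $1$-PS rests on the claim that $g\mapsto \mu(\lambda, g\cdot X)$ attains its maximum at the coordinate-aligned surface. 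Upper semicontinuity does not give this: it only says that special points have $\mu$ at least the generic value, and both $g=e$ and the problematic $g$ are special points. Concretely, rewriting $F_X=x_1(c_0f_2^2+c_2x_1^2f_2+c_4x_1^4)$, $f_2=x_0x_3-x_2^2$, in the mixed basis $x_1=py_1+qy_2$, $x_2=ry_1+sy_2$ produces cancellations on a hypersurface of mixing parameters (e.g.\ the coefficient of $y_2^5$ is $q(c_0s^4-c_2q^2s^2+c_4q^4)$, which can vanish); when such a monomial disappears, the inequality $\mu(\lambda,X)\le\mu(\lambda'',X)$ against the natural comparison $1$-PS $\lambda''=diag(a_0,\min(a_1,a_2),\min(a_1,a_2),a_3)$ (suitably renormalized) can fail, so nonnegativity no longer transfers. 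You flagged this as the delicate step; it is in fact the crux, and as written it is a genuine gap.

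The paper avoids the issue by feeding the \emph{full} stabilizer, not just its maximal torus, into Kempf's machinery. By Kempf, Cor.\ 3.5(b), the optimal parabolic $P_{\rho}$ of an unstable $X$ contains $G_x^0\cong SL(2,\mathbb{C})$, so every term of the destabilizing flag is an $SL(2,\mathbb{C})$-subrepresentation of $W$. Since $Sym^2(V)$ is irreducible, the only such subspaces are $0$, $\langle x_1\rangle$, $\langle x_0,x_2,x_3\rangle$, and $W$ --- all coordinate subspaces --- hence the diagonal torus $T_{max}$ lies in $P_{\rho}$, and Kempf, Thm.\ 3.4(c)(4), then places an optimal (hence destabilizing) $1$-PS inside $T_{max}$ itself. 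The mixing that derails your argument is exactly what full $SL(2,\mathbb{C})$-invariance forbids: a line $\langle \alpha x_1+\beta x_2\rangle$ with $\beta\neq 0$ is $T_H$-invariant but not $SL(2,\mathbb{C})$-invariant. (Separately, your final ordering step is also incomplete: the Weyl element only yields $a_0\ge a_3$; placing $a_2$ between them requires noting that permutations of $\{x_0,x_2,x_3\}$ preserve the shape of the equation --- or choosing $f_2$ symmetric in those variables --- rather than being a symmetry of $X$ itself.)
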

\begin{proof}

A basis $\{ x_i \}$ of a vector space $W$ is compatible with a reductive group  if given an equivariant decomposition of $W$, the equivariant subspaces are spanned by a subset of the variables $\{x_i \}$.  For us,  the group is $\operatorname{SL}(2,\mathbb{C})$, $W:=H^0(\mathbb{P}^3, \mathcal{O}_{\mathbb{P}^3}(1))$ and  $V \cong H^0(\mathbb{P}^1, \mathcal{O}_{\mathbb{P}^1}(1))$ is the standard two dimensional 
$\SL(2, \mathbb C)$-representation.   We select a distinguished coordinate system $\{x_i \}$ compatible with the $\SL(2, \mathbb C)$-decomposition
\begin{align}\label{dpW}
W:=H^0(\mathbb{P}^3, \mathcal{O}_{\mathbb{P}^3}(1)) \cong \operatorname{Sym}^2(V) \oplus \operatorname{Sym}^0(V)
\end{align}
induced by the embedding $C \hookrightarrow \mathbb{P}^3$. 
In particular, $\mathbb{P}(\operatorname{Sym}^2(V)):= (x_1=0)$ is the plane containing  $C$.  We select a maximal torus $T_{max}$ compatible with the $\{ x_i \}$. It follows that the plane $(x_1=0)$ is fixed by $T_{max}$,  and the equation of $X$ in this coordinate system is the one of the statement.  
We follow the notation of \cite[Def 4.6]{morrison2011grobner} by saying that 
 $T_{max}$ determines stability for $X$ because $W$ has a multiplicity-free decomposition  into irreducible $\SL(2, \mathbb C)$-representations and the basis $\{ x_i \}$ is compatible with the $\SL(2, \mathbb C)$-action.
According to  \cite[Prop 4.7 ]{morrison2011grobner}, if $X$ is $T_{max}$-semi-stable, then $X$ is
$\SL(4, \mathbb C)$-semi-stable.  Our result follows because we can take $\lambda \subset T_{max}$ to be $\lambda = diag (a_0,a_1,a_2,a_3)$ with $a_0 \geq a_2 \geq a_3$, where this last condition is achieved by relabelling. 
\end{proof}
\begin{prop}\label{q2h}
Let $X$ be a quintic surface that decomposes as a double quadric surface $Q$ and a hyperplane $H$. Then $X$ is semi-stable if and only if $Q$ is smooth and $Q \cap H$ is smooth.
\end{prop}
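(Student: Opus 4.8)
The plan is to read the statement as the assertion that the \emph{nonreduced} quintic $X=2Q+H$, i.e.\ the surface cut out by $F_X=\ell\cdot q^2$ where $\ell$ defines $H$ and $q$ defines $Q$, is semistable precisely when both smoothness hypotheses hold; this is the case $k=2$ of Lemma~\ref{sl2c} and the ingredient invoked in the proof of Corollary~\ref{2qh}. I would prove the two implications separately: the failure of either smoothness hypothesis is dispatched by the destabilizing results already available, while semistability in the good case is obtained by exploiting the $SL(2,\mathbb{C})$-symmetry of $X$ through Lemma~\ref{sl2c}.

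For necessity I argue by contrapositive. If $Q$ has rank $3$ it is a cone with vertex $v$, where $q$ vanishes to order two; then $q^2$, hence $X$, has multiplicity at least $4$ at $v$, so $X$ is unstable by Corollary~\ref{q4pt}. If $Q$ has rank $\leq 2$ it is a pair of planes or a double plane, so $X$ contains a double plane and is destabilized by $\lambda_{10}$ exactly as in the first line of the proof of Corollary~\ref{2qh}. It remains to treat a smooth $Q$ whose plane section $C=Q\cap H$ is singular; since $Q$ is smooth this forces $H$ to be tangent to $Q$, so $C$ is a pair of lines meeting at the tangency point $p$. All such pairs $(Q,H)$ are projectively equivalent, and I would normalize $q=x_0x_3-x_1x_2$ with $p=p_3$ and $H=(x_0=0)$, giving
\[
F_X=x_0\,(x_0x_3-x_1x_2)^2=x_0^3x_3^2-2x_0^2x_1x_2x_3+x_0x_1^2x_2^2 .
\]
A direct check then yields $\mu(\lambda_1,X)=1>0$ for $\lambda_1=(1,0,0,-1)$, so $X$ is unstable.

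For sufficiency, suppose $Q$ is smooth and $C=Q\cap H$ is a smooth conic. Again all such configurations are projectively equivalent, so I may place $C$ as the $SL(2,\mathbb{C})$-invariant Veronese conic of Lemma~\ref{sl2c}: in the distinguished coordinates $x_0,x_2,x_3$ span $Sym^2(V)$ and $x_1$ spans $Sym^0(V)$, the conic is $(x_1=f_2=0)$ with $f_2=x_0x_3-x_2^2$, and the invariant smooth quadric through it is $Q=(x_1^2+f_2=0)$. Hence $X=2Q+H$ is $SL(2,\mathbb{C})$-invariant with
\[
F_X=x_1\,(x_1^2+f_2)^2=x_1^5+2x_0x_1^3x_3-2x_1^3x_2^2+x_0^2x_1x_3^2-2x_0x_1x_2^2x_3+x_1x_2^4 .
\]
By Lemma~\ref{sl2c} it suffices to verify $\mu(\lambda,X)\leq 0$ for every $\lambda=\mathrm{diag}(a_0,a_1,a_2,a_3)$ with $\sum a_i=0$ and $a_0\geq a_2\geq a_3$. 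Using $\sum a_i=0$ one computes $\lambda.(x_0x_1x_2^2x_3)=a_2$, $\lambda.x_1^5=5a_1$, and $\lambda.(x_0^2x_1x_3^2)=-(a_1+2a_2)$, and a three-way split ($a_2\leq 0$; $a_2>0\geq a_1$; $a_2>0$ and $a_1>0$, where $a_0\geq a_2>0$ forces $a_3<0$) exhibits in each case a monomial of nonpositive weight. Thus $\mu(\lambda,X)\leq 0$ and $X$ is semistable, in fact strictly so since $\mu(\lambda_1,X)=0$.

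The only genuinely nonroutine points are the two reductions to normal form: transitivity of $SL(4,\mathbb{C})$ on smooth quadrics together with the homogeneity of $Q\cong\p{1}\times\p{1}$ under $\mathrm{Aut}(Q)$, which makes every (smooth $Q$, tangent or transverse $H$) configuration projectively standard, and the identification of the invariant pencil of quadrics through $C$ needed to apply Lemma~\ref{sl2c}. Once these are in place the numerical verification reduces to the short computation above; I expect the case $a_2>0$ with $a_1>0$ of the semistability check to be the subtlest point, since there one must invoke the relation $\sum a_i=0$ to see that $x_0^2x_1x_3^2$ carries strictly negative weight.
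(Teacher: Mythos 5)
Your proof is correct, and it keeps the paper's overall skeleton: a singular $Q$ is disposed of via the multiplicity-four point (Corollary~\ref{q4pt}) or the double plane, the tangent case by an explicit one parameter subgroup, and the smooth case via the $SL(2,\mathbb{C})$-symmetry through Lemma~\ref{sl2c}. Your reading of the statement as concerning the nonreduced surface $2Q+H$ is also the right one; it is how the paper itself uses the proposition in Corollary~\ref{2qh}. Where you genuinely diverge is in the execution of the numerical checks. The paper works with generic equations: in the tangent case it writes $x_1(x_0^2+x_0x_2+x_1^2+x_1x_3)^2$ and concludes instability because its monomials lie in $M^{\oplus}(\lambda_8)$, an unstable configuration by Proposition~\ref{prop:maximalsemistableops} (note that $\mu(\lambda_8,\cdot)=0$ there, so the conclusion leans on that combinatorial fact, not on a strictly positive weight); in the smooth case it keeps the general quadric $f_2(x_j,x_k,x_l)+x_i l$ through the conic, bounds $\mu$ by a minimum of monomial weights, and finishes with a computer verification over the critical 1-PS. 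You instead invoke projective homogeneity --- transitivity of $SL(4,\mathbb{C})$ on smooth quadrics and of $\mathrm{Aut}(Q)$ on tangent, respectively transverse, plane sections --- to reduce each case to a single normal form, $x_0(x_0x_3-x_1x_2)^2$ and $x_1(x_1^2+x_0x_3-x_2^2)^2$. This buys a self-contained instability certificate in the tangent case ($\mu(\lambda_1,X)=1>0$, which is strictly stronger than non-stability) and collapses the computer check to a three-case sign analysis; I verified your weight computations ($\sum a_i=0$ gives exactly $a_2$, $5a_1$, and $-(a_1+2a_2)$ for $x_0x_1x_2^2x_3$, $x_1^5$, $x_0^2x_1x_3^2$, and the case split on the signs of $a_1,a_2$ is exhaustive). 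The price of your route is having to justify the two normalizations, which you do; the paper's generic-coefficient formulation avoids that step but at the cost of a machine verification and of citing the instability of $M^{\oplus}(\lambda_8)$-configurations.
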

\begin{proof} 
The quadric surface must be smooth; otherwise $X$ will contain a quadruple point.  If the intersection $Q \cap H$ is singular,  then the equation associated to the quintic surface can be written as
$
x_1 \big( x_0^2+x_0x_2+x_1^2+x_1x_3 \big)^2
$
because there is exactly one orbit of such quintics.  This last quintic surface  is destabilized by $\lambda_8$.
Next, we suppose that the conic $Q \cap H$ is smooth. 
Then in some coordinate system 
$
F_X=x_1(x_1x_2-x_0x_3+\alpha x_2^2)^2
$
where $\alpha \neq 0$. The semi-stability follows from Lemma \ref{sl2c} and by noting that
$F_X$ is clearly semi-stable with respect to every 1-PS acting diagonally on the $x_i's$.
\end{proof}
\begin{cor}\label{2qh}
A non-reduced quintic surface $X$ is semi-stable if and only if 
$X=2Q+H$ where $Q$ is a smooth quadric surface, and $H$ is a hyperplane intersecting $Q$ along a smooth conic.
\end{cor}
\begin{proof}If $X$ decomposes as a union of a double plane and another cubic surface, then we can select a coordinate system so that $F_X=x_0^2p_3(x_0,x_1,x_2,x_3)$, which 
is destabilized by $\lambda_{10}$. By degree considerations, the other case is a union of a double quadric surface $Q^2$ and a hyperplane $H$. 
Then, the statement follows from Proposition \ref{q2h}.
\end{proof}

\section{
Minimal Orbits of the GIT Compactification}
\label{sec:minimalorbit}

Recall that we refer to the image of  the strictly semi-stable locus in the GIT quotient
as the GIT boundary.  Given a point $q$  at the GIT boundary, there is a unique closed orbit associated to $q$.  If we say that $X$ is parametrized by $q$, then we suppose that $X$ corresponds to that closed orbit.  Next, we describe the generic surfaces parametrized by the GIT boundary and  several aspect of its geometry.
\begin{thm}
\label{gitboundary}
The strictly semi-stable locus in  $\overline{\mathcal{M}}^{\scriptsize{GIT}}_5$ has four disjoint irreducible components:  $\Lambda_1$, $\Lambda_2$, $\Lambda_{3}$ and 
$\Lambda_{4}$ of dimensions $ 6$, $1$, $0$, and $1$, respectively (see Figure \ref{fig:GIT}).
Let $X_k$ be a generic  surface parametrized by $\Lambda_k$. Then $X_k$ has the following geometric properties:
\begin{enumerate}[label*=\arabic*.]
\item The surface $X_1$ is normal; it contains two isolated triple point singularities 
 which are called $V^*_{24}$ on \cite[Table II]{m6}. This singularity has  geometric genus $3$, modality $7$, and Milnor number $24$. 
\item The surface $X_2$ is singular along three lines that support two non-isolated triple point singularities. 

\item The surface $X_3$ has a triple point isolated singularity of geometric genus $2$, modality $5$, and Milnor number $24$, which is called $V_{24}^{*1}$ in \cite[pg 244]{m5}. 

Additionally, the surface $X_3$ is singular along a line supporting a distinguished triple point singularity.

\item 
The surface $X_4$ has an isolated triple point singularity of geometric genus $2$, modality $5$, and Milnor number $22$, which is called $V^{'}_{22}$ in \cite[pg 244]{m5}. 

Additionally, the surface $X_4$ is singular along a line supporting only singularities of multiplicity two.
\end{enumerate}
The boundary component $\Lambda_1$ is associated to the 1-PS $\lambda_1$, $\Lambda_2$ is associated to $\lambda_2$, 
$\Lambda_{3}$ is associated to $\lambda_3$ and 
$\lambda_6$, and $\Lambda_{4}$ is associated to $\lambda_4$ and $\lambda_5$.
\end{thm}
\begin{proof} 
The statements about $X_k$ follow from studying the equations associated to the monomial invariants with respect to $\lambda_k$ for $k \leq 6$ and comparing with the references 
\cite{m5}, \cite{m6} and \cite{watanabeyoshi}. 
We also need some stability calculations. The main theoretical tool is the centralizer version of the Luna's criterion  (see \cite[Remark pg 237]{luna}).
Luna's result implies that given $ W =H^0( \p{3}, \mathcal{O}_{ \p{3}}(5))$, the group  $G= \operatorname{SL}(4, \mathbb{C})$, and the stabilizer $G_x$ of $x$, the orbit  $G \cdot x$ is closed in $W$ if and only if the orbit $C_G (G_x) \cdot x$ is closed in  $W^{G_x}$, where $W^{G_x} \subset W$ denotes the invariant set under the $G_x$ action and $C_G(G_x)$ is the centralizer of $G_x$ in $G$. 
Next, we complete the analysis of the boundary associated to each $\lambda_k$.

\textbf{Boundary stratum $\Lambda_1$.}
The equation of $X_1$ can be written as
\begin{align}\label{mo1}
\overline{F}_{\lambda_1} &=
 x_3^2 x_0^2 f_1(x_1,x_2) + x_3 x_0 f_3 (x_1 , x_2 ) + f_5 (x_1 , x_2 ). 
\end{align}
The generic quintic surface defined by Equation \eqref{mo1} has  two triple point singularities at the points  $[1:0:0:0]$  and $[0:0:0:1]$
of the type described in the statement.   The centralizer 
$C_G(\lambda_1)$  is given by
\begin{align*}
C_G(\lambda_1)=
\left\{  
\begin{pmatrix}
a & 0& 0 \\
0 & A & 0 \\
0 & 0 & \frac{1}{a \det A}
\end{pmatrix}
 : A \in GL(2, \mathbb{C}), \; a \neq 0  \right\}
\end{align*}
The dimension of $\Lambda_1$ is  computed by noting that 
$\dim(W^{\lambda_1})=12$, the centralizer has dimension $5$, and $\overline{F}_{\lambda_1}$ can be defined up to a constant.   From the unstable degenerations of $X_1$, we only describe the one  used to prove that the boundary locus $\Lambda_1$ is disjoint from the other loci $\Lambda_i$.

\textbf{Claim A}: There is not a semi-stable quintic surface $X$ such that
\begin{enumerate}
\item $X$ is parametrized by a point in $\Lambda_1$. 
\item $X$ has at least two triple point singularities $q_A$ and $q_B$ which are degenerations of the triple point singularities in $X_1$.
\item  
X has a line of singularities supported  at the line spanned by the points $\{ q_A, q_B \}$.
\end{enumerate}
\textbf{Proof:}
By hypotheses $(i)$ and $(ii)$, there is a coordinate system such that the equation of $X$ has the form of Equation \eqref{mo1} with the triple points $q_A$ and $q_B$ supported at 
$[1:0:0:0]$ and $[0:0:0:1]$. The line spanned by those points is supported at $(x_1=x_2=0)$.
Hypothesis $(iii)$ implies that the equation of $X$ is a degeneration of the Equation  \eqref{mo1} with the term $f_1(x_1,x_2)$ equal to zero.  The vanishing of that term implies that $X$ has a triple line of singularities; and a  surface with those singularities is unstable by Proposition \ref{prop:tripleline}.

\textbf{Boundary stratum $\Lambda_2$.}
The centralizer of $\lambda_2$ is the torus and the stability analysis is the standard one.  The equation of a  semi-stable surface  parametrized by $\lambda_2$ is given by:
\begin{align}
\label{mo2}
\overline{F}_{\lambda_2} &=
x_3^2 x_0 x_1^2 + x_3 x_0^2 x_2^2 +a_1 x_3 x_2 x_1^3 + a_2 x_0 x_1 x_2^3
& & \text{ where $[a_1:a_2] \in \mathbb{P}^1$.} 
\end{align}
This surface is singular along the lines
\begin{align*}
(x_2=x_3=0)
& &
(x_1=x_2=0)
&& 
(x_0=x_1=0)
\end{align*}
By analyzing Equation \eqref{mo2} and its partial derivatives,  it follows that if 
$a_1$ and $a_2$ are not equal to  $0$, then the surface defined by $\overline{F}_{\lambda_2}$ has only two non-isolated triple point singularities at $[1:0:0:0]$ and $[0:0:0:1]$.   The points $[0:1]$ and $[1:0]$ in $\Lambda_2$ parametrize a union of a quartic surface and a hyperplane which is described in Proposition \ref{41boundary}.

\textbf{Boundary stratum $\Lambda_3$.}  The equation of a semi-stable surface 
stabilized by  $\lambda_3$  is given by:
\begin{align}
\label{mo3}
\overline{F}_{\lambda_3} &=
x_3^2 x_0^2 x_1 + x_3 x_1^3 x_2 + x_0 x_2^4. 
\end{align}
This surface  has an isolated triple point supported at $[0:0:0:1]$
and a line of singularities supported at  $(x_2=x_3=0)$.  The line of singularities supports a triple point at $[1:0:0:0]$.

\textbf{Boundary stratum $\Lambda_4$.}  The equation of a semi-stable surface stabilized by $\lambda_4$ is given by:
\begin{align}
\label{mo4}
\overline{F}_{\lambda_4} &= 
x_0^3x_3^2 + x_3 x_2 x_1^3 + a_1 x_0 x_1 x_2^2x_3 + a_2 x_2^5
& & \text{ where $[a_1:a_2] \in \mathbb{P}^1$}.
\end{align}
If  $a_2 \neq 0$, then the point $[a_1:1] \in \Lambda_4$ parametrizes a quintic surface with one isolated triple point  at  $[0:0:0:1]$ and a line of singularities of multiplicity two supported at $(x_2=x_3=0)$.  The point $[1:0] \in \Lambda_4$ parametrizes a 
union of a quartic surface and a hyperplane described in Proposition \ref{41boundary}.

%%%%%%%%%%%%%%%%%%%%%%%%%%
Finally, we observe that the equations associated to the monomial invariants with respect to 
$\lambda_5$ and $\lambda_6$ are, after a change of coordinates, equal to the Equations
\eqref{mo4} and \eqref{mo3}, respectively.  

Next, we show that the boundary components $\Lambda_i$ are disjoint from each other.

\textbf{Claim:}  $\Lambda_1 \cap \Lambda_2 =\emptyset$. 

\textbf{Proof:} 
From our previous discussion,  if $X$ is parametrized by $\Lambda_2$,  then 
$X$ has either  two triple point singularities with a curve of singularities supported at the line spanned by them, or  decomposes as a union of a quartic and a hyperplane.  
The former case cannot be a degeneration of $X_1$ by  Claim A.
If $X$ decomposes as a union of a quartic and a hyperplane, then  its equation can be written as
\begin{align}
x_3
\left(
x_3 x_0 x_1^2 + x_0^2 x_2^2 + x_2 x_1^3 
\right).
\end{align}
The surface $X$  has three triple point singularities supported at the points
\begin{align*}
q_0=[1:0:0:0]
& &
q_3=[0:0:0:1]
& &
q_2=[0:0:1:0]
\end{align*}
such that the following holds:
\begin{enumerate}
\item $X$ has a two lines of singularities supported at the lines spanned by
$\{q_0, q_3\}$ and by $\{q_2, q_3 \}$.  
\item $X$ does not contain the line 
$(x_1=x_3)$
spanned by 
$\{ q_0, q_2 \}$.
\end{enumerate}
We claim that $X$ cannot be a degeneration of $X_1$. Indeed,  the two triple point singularities in $X_1$ must degenerate to  triple points in $X$, and the line spanned by the two triple points in $X_1$ must be contained in  $X$ as well. 
However, conditions $(i)$ and $(ii)$ imply that $X$  is singular along that line which is  impossible by Claim A.

\textbf{Claim}: $\Lambda_1 \cap \Lambda_3 = \emptyset$.

\textbf{Proof:} 
If $X_3$ is a degeneration of $X_1$, then an isolated triple point singularity of $X_3$ is  a degeneration of one in $X_1$ because $X_3$ has only two triple point singularities.
  This degeneration is impossible by the  semi-continuity of the geometric genus (see \cite[Th. 1]{elkik1981rationalite}).

\textbf{Claim}: $\Lambda_1 \cap \Lambda_4 = \emptyset$.

\textbf{Proof:} 
A surface $X$ parametrized by $\Lambda_4$ has either one  isolated triple point singularity or is a union of a quartic surface and a hyperplane. 
We can rule out the first case by using the 
semi-continuity of the geometric genus as in the case $\Lambda_1 \cap \Lambda_3$. 
If 
$X$ decomposes as a union of a quartic and a hyperplane, then its equation can be written as
$$
x_3(x_0^3x_3+x_2x_1^3+x_0x_1x_2^2).
$$
It follows that $X$ has only two triple point
 singularities supported at 
\begin{align*}
q_2=[0:0:1:0]  && q_3=[0:0:0:1]
\end{align*}
and a line of singularities supported at  $(x_0=x_1=0)$ which is spanned by the points $\{ q_2, q_3 \}$.  Therefore,  $X$ cannot be a degeneration of $X_1$ by Claim A.

\textbf{Claim}: $\Lambda_3 \cap \Lambda_2 =  \Lambda_3 \cap \Lambda_4 =\emptyset$.

\textbf{Proof:} 
A surface $X$ parametrized by $\Lambda_2$  has three lines of singularities while $X_3$ only has one.  Then $X_3$ cannot be a degeneration of $X$.  The surface $X_4$  has an isolated triple point singularity whose tangent cone is of the form  $x^3$. The surface $X_3$ has an isolated triple point singularity whose tangent cone is of the form $x^2y$. Then $X_3$ is not a degeneration of $X_4$.

\textbf{Claim}: $\Lambda_4 \cap \Lambda_2 =\emptyset$.

\textbf{Proof:} 
A surface parametrized by $\Lambda_2$ does not have an isolated triple point singularity.
By our previous discussion,  $\Lambda_4 \cap \Lambda_2$ must parametrize a union of a quartic surface and a hyperplane.   
However, we can check there is not such a surface by  either directly  examining the
Equations  \eqref{mo2} and \eqref{mo4} or 
by the description of these surfaces in Proposition \ref{41boundary} (see Figure \ref{41fig})
\end{proof}
\begin{cor}\label{cor:mxsblz}
Let $G_x^0$ be the connected component of the stabilizer associated to a closed orbit of a strictly semi-stable point. Then $rank(G_x^0)=1$, and up to isogeny, the largest stabilizer for a semi-stable quintic surface is either 
$\SL(2, \mathbb{C})$ or $\mathbb{G}_m$. 
\end{cor}
\begin{proof}
Theorem \ref{gitboundary} implies that there is not  a semi-stable $X$ surface with  
a $(\mathbb{C}^*)^2$ contained in its stabilizer.  
Matsushima's criterion (see \cite{matsushima1992espaces}) implies that  $G_x^0$ is a reductive group. Therefore, our corollary follows from the classification of reductive groups over the complex numbers.
\end{proof}

\subsection{Local Analysis near the GIT boundary}\label{localGIT}
Here, we discuss the local structure at selected points, in the etale topology,  of our GIT quotient 
$$
\overline{\mathcal{M}}^{\scriptscriptstyle{GIT}}_5
:=
\left( \mathbb{P}^N \right)^{ss} 
\gquot
G
$$ 
where $G \cong SL(4, \mathbb{C})$ and $N=55$. 
The main technical tool is Luna's slice Theorem \cite[App D]{luna}. Let 
$x \in (\mathbb{P}^N)^{ss}$ be a strictly semi-stable point with stabilizer $G_x$. 
There is a $G_{x}$-invariant slice $V_x$ to the orbit $G \cdot x$ which can be taken to be a smooth, affine, locally closed subvariety of $(\mathbb{P}^N)^{ss}$ such that 
$U=G \cdot V_x$ is open in $(\mathbb{P}^N)^{ss}$. Given
$(G \times_{G_x} V_x)/G_x$ where the action on the product is given by 
$h \cdot (g, v) = (g \cdot h^{-1}, hv)$, and by considering the fiber of the normal bundle 
$\mathcal{N}_x:=\left( \mathcal{N}_{G \cdot x | \mathbb{P}^n} \right) |_x $,
we have the following commutative diagram:
$$
\begin{CD}
G\times_{G_x}\mathcal{N}_x @<\text{\'etale}<<G\times_{G_x}V_x@>\text{\'etale}>>U@.\subset@. (\mathbb{P}^N)^{ss}\\
@VVV @VVV@VVV@.@VVV\\
\mathcal{N}_x\gquot G_x@<\text{\'etale}<<V_x\gquot G_x@>\text{\'etale}>>\ U\gquot G\ @.\subset@.\ (\mathbb{P}^N)^{ss} \gquot G\ @.\cong@. \overline{\mathcal{M}}^{\scriptscriptstyle{GIT}}_5
\end{CD}
$$
Recall that Kirwan constructed a partial desingularization of the GIT quotient by blowing up loci associated to positive dimensional stabilizers. 
The associated exceptional divisor $\mathbb{P}(\mathcal{N}_{x})^{ss} \gquot G_x$ often carries itself a modular meaning. It is of special interest to understand the Kirwan blow up of 
$\overline{\mathcal{M}}^{\scriptscriptstyle{GIT}}_5$ at
the point 
$
\omega
$ that parametrizes a union of a double smooth quadric surface and a transversal hyperplane. Indeed, J. Rana \cite[Thm 1.4 and 4.1]{Rana} proves that 
on the KSBA compactification
$
\overline{\mathcal{M}}^{\scriptscriptstyle{KSBA}}_5
$
 there is a Cartier divisor $\mathcal{D}$ associated with the deformations of the $\frac{1}{4}(1,1)$ singularity. At least one component of this divisor is obtained from taking the stable replacement of the following family of quintic surfaces deforming to $\omega$:
\begin{align}\label{sdq}
X_t =\big(f_2(\textbf{x})^2f_1(\textbf{x})+tf_2(\textbf{x})f_3(\textbf{x})
+t^2f_5(\textbf{x}) =0 \big)
\end{align}
where $f(\textbf{x}) :=f(x_0,x_1,x_2,x_3)$. 
%Then the natural birational map $\overline{\mathcal{M}}^{\scriptscriptstyle{KSBA}}_5 \dashrightarrow\overline{\mathcal{M}}^{\scriptscriptstyle{GIT}}_5$ contracts the divisor $\mathcal{D}$ to the point $\omega$. It is an open problem to describe all the divisors in the KSBA space that are contracted to $\omega$ via this birational map.
By Proposition \ref{kdq} and by \cite[Thm 1.4 and 4.1]{Rana}, we can recover  Equation \eqref{sdq}  from a local analysis of the GIT quotient near $\omega$.   (For a similar situation in degree four, see Shah \cite{shah4}).

\begin{lemma} \label{lb1}
Let $\omega \in \overline{\mathcal{M}}^{\scriptscriptstyle{GIT}}_5$ be the point parametrizing a union of a double smooth quadric surface $Q$ and a transversal hyperplane $H$. 
Let $x$ be a semi-stable point with closed orbit mapping to the point $\omega \in \overline{\mathcal{M}}^{\scriptscriptstyle{GIT}}_5$. Then, the natural representation of its stabilizer $G_x \cong \operatorname{SL}(2, \mathbb{C})$ on the normal bundle $\mathcal{N}_x$ is isomorphic 
to 
\begin{align*}
\mathcal{N}_{x} &= 
\left( \operatorname{Sym}^{5}(V) \otimes \operatorname{Sym}^5(V) \right) \oplus \operatorname{Sym}^6(V) 
\end{align*}
where $V \cong H^0(\mathbb{P}^1, \mathcal{O}_{\mathbb{P}^1}(1))$ is the standard three dimensional representation of $\operatorname{SL}(2,\mathbb{C})$ induced by the conic 
$Q \cap H$.
\end{lemma}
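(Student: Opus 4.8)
The plan is to compute the ambient tangent space $T_{[F_\omega]}\mathbb{P}^N$ and the tangent space to the orbit $G\cdot[F_\omega]$ as representations of $G^0_x\cong SL(2,\mathbb{C})$, and to read off $\mathcal{N}_x$ as the complementary summand; this is legitimate because $SL(2,\mathbb{C})$ is reductive, so every module splits and the normal module is determined by a difference of characters. First I would fix, as in Lemma \ref{sl2c}, a coordinate system adapted to the decomposition \eqref{dpW}, in which the invariant conic is $(x_1=f_2(x_0,x_2,x_3)=0)$. Since the closed orbit $\omega$ is (the square of a quadric) times the hyperplane, $SL(2,\mathbb{C})$-invariance forces the quartic factor to be $q^2$ with $q=f_2(x_0,x_2,x_3)+x_1^2$ an invariant smooth quadric through $C$ (invariance kills any $x_1\cdot(\text{linear in } x_0,x_2,x_3)$ term, as $Sym^2(V)$ carries no trivial summand), so that $F_\omega=x_1q^2$. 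As $x_1$ and $q$ both lie in $Sym^0(V)$, the vector $F_\omega$ spans a trivial subrepresentation, whence $T_{[F_\omega]}\mathbb{P}^N\cong Sym^5(W)\ominus Sym^0(V)$ as $SL(2,\mathbb{C})$-modules.

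Second I would decompose $Sym^5(W)$ using $W=Sym^2(V)\oplus Sym^0(V)$, abbreviating $Sym^k:=Sym^k(V)$. Writing $Sym^5(W)=\bigoplus_{i=0}^{5}Sym^i(Sym^2(V))$, where the one-dimensional trivial factor $Sym^{5-i}(Sym^0(V))$ only records the power of $x_1$, the computation reduces to the classical plethysms $Sym^i(Sym^2(V))$, which for $i=0,\dots,5$ equal $Sym^0$, $Sym^2$, $Sym^4\oplus Sym^0$, $Sym^6\oplus Sym^2$, $Sym^8\oplus Sym^4\oplus Sym^0$ and $Sym^{10}\oplus Sym^6\oplus Sym^2$. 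Collecting isotypic components gives $Sym^5(W)=3\,Sym^0\oplus 3\,Sym^2\oplus 2\,Sym^4\oplus 2\,Sym^6\oplus Sym^8\oplus Sym^{10}$ of dimension $56$, hence $T_{[F_\omega]}\mathbb{P}^N=2\,Sym^0\oplus 3\,Sym^2\oplus 2\,Sym^4\oplus 2\,Sym^6\oplus Sym^8\oplus Sym^{10}$.

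Third I would identify the orbit direction. The infinitesimal orbit map $\mathfrak{sl}(4,\mathbb{C})\to Sym^5(W)/\langle F_\omega\rangle$, $\xi\mapsto\xi\cdot F_\omega$, is $SL(2,\mathbb{C})$-equivariant and its image is the tangent space to the orbit. Using self-duality $W^{\ast}\cong W$, one has $\mathfrak{gl}(4,\mathbb{C})=W\otimes W^{\ast}\cong(Sym^2\oplus Sym^0)^{\otimes 2}=Sym^4\oplus 3\,Sym^2\oplus 2\,Sym^0$, so $\mathfrak{sl}(4,\mathbb{C})=Sym^4\oplus 3\,Sym^2\oplus Sym^0$ after removing the scalar summand. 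The Lie algebra $\mathfrak{sl}(2,\mathbb{C})=\mathrm{Lie}(G^0_x)$ maps isomorphically onto the adjoint copy $Sym^2$ and lies in the kernel, since $G^0_x$ fixes $F_\omega$; because Theorem \ref{gitboundary} gives $\dim G^0_x=3$, the orbit has dimension $12$ and the kernel is exactly this $Sym^2$. Thus the orbit tangent module is $\mathfrak{sl}(4,\mathbb{C})\ominus Sym^2=Sym^4\oplus 2\,Sym^2\oplus Sym^0$.

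Finally, subtracting this from $T_{[F_\omega]}\mathbb{P}^N$ leaves $\mathcal{N}_x=Sym^0\oplus Sym^2\oplus Sym^4\oplus 2\,Sym^6\oplus Sym^8\oplus Sym^{10}$ of dimension $43$, and Clebsch--Gordan identifies this with $(Sym^5(V)\otimes Sym^5(V))\oplus Sym^6(V)$, the tensor factor being the natural avatar of $H^0(Q,\mathcal{O}_Q(5))\cong H^0(\mathbb{P}^1\times\mathbb{P}^1,\mathcal{O}(5,5))$ on the smooth quadric $Q\cong\mathbb{P}^1\times\mathbb{P}^1$. The one genuinely delicate point is the third step: one must be sure the kernel of the infinitesimal orbit map is no larger than $\mathfrak{sl}(2,\mathbb{C})$, i.e. that no one-parameter subgroup of $SL(4,\mathbb{C})$ outside the embedded $SL(2,\mathbb{C})$ scales $F_\omega$. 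This is forced by the stabilizer computation of Theorem \ref{gitboundary}, but can also be verified directly: requiring a diagonal $\lambda$ to act with a single weight on all six monomials of $x_1q^2$ pins down $a_2=a_1$ and $a_0+a_3=2a_1$, whence with $\sum a_i=0$ one gets $a_1=a_2=0$ and $a_3=-a_0$, namely the maximal torus of the embedded $SL(2,\mathbb{C})$, on which the weight vanishes.
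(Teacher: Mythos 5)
Your proposal is correct and takes essentially the same route as the paper's proof: both compute the $SL(2,\mathbb{C})$-equivariant plethysm decomposition of $Sym^5(W)$ with $W\cong Sym^2(V)\oplus Sym^0(V)$, pass to $\mathcal{T}_{\mathbb{P}^N}|_x$ via the Euler sequence, decompose $\mathfrak{sl}(4,\mathbb{C})$ as $Sym^4(V)\oplus \left(Sym^2(V)\right)^{\oplus 3}\oplus Sym^0(V)$, subtract the orbit tangent directions using the normal exact sequence, and identify the result with $\left(Sym^5(V)\otimes Sym^5(V)\right)\oplus Sym^6(V)$ by Clebsch--Gordan. The only difference is cosmetic: where you verify directly (via diagonal one parameter subgroups) that the kernel of the infinitesimal orbit map is exactly $\mathfrak{sl}(2,\mathbb{C})$, the paper simply takes the stabilizer identification from Theorem \ref{gitboundary}, which the statement of the lemma already presupposes.
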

\begin{proof} The lemma follows from calculating an appropriate 
$G_x \cong \operatorname{SL}(2, \mathbb{C})$ equivariant decomposition of the summands in the normal exact sequence
\begin{align}\label{nes}
0 \to \mathcal{T}_{G \cdot x} \to \mathcal{T}_{\mathbb{P}^{N}} \to 
\mathcal{N}_{G \cdot x| \mathbb{P}^{N}} \to 0
\end{align}
which we localize at $x$. To find the equivariant decomposition of $T_{\mathbb{P}^N}$, we use \cite[Exercise. 11.14]{fultonRp} together with 
the decomposition on Expression \eqref{dpW} 
to calculate the following decomposition of 
$H^0\big( \mathbb{P}^N, \mathcal{O}_{\mathbb{P}^N}(1) \big)$:
\begin{align*}
\operatorname{Sym}^{10}(V) \oplus \operatorname{Sym}^8(V) \oplus \left( \operatorname{Sym}^6(V) \right)^{\oplus 2}
\oplus \left( \operatorname{Sym}^4(V) \right)^{\oplus 2}
\oplus \left( \operatorname{Sym}^2(V) \right)^{\oplus 3} 
\oplus \left( \operatorname{Sym}^0(V) \right)^{\oplus 3}. 
\end{align*}
Next, we use the  Euler sequence to obtain
\begin{align}\label{eusq}
0 \to 
\mathcal{O}_{\mathbb{P}^N}|_{x}
\to 
\mathcal{O}_{\mathbb{P}^N}(1)|_{x} \otimes
H^0\big( \mathbb{P}^N , \mathcal{O}_{\mathbb{P}^N}(1) \big)
\to
\mathcal{T}_{\mathbb{P}^N}|_x
\to 
0,
\end{align}
so the decomposition of 
$H^0 ( \mathbb{P}^N , \mathcal{O}_{\mathbb{P}^3}(1) )$
induces a decomposition  at the tangent space $\mathcal{T}_{\mathbb{P}^N}|_{x} $.
To calculate the decomposition of the tangent space $\mathcal{T}_{G.x}|_x $ we use the exact sequence
\begin{align}\label{gg}
0 \to \mathcal{T}_{G_x} \to \mathcal{T}_{G} \to \mathcal{T}_{G \cdot x} \to 0.
\end{align}
The tangent space $T_{G_x}|_x$ is identified with the adjoint representation of 
$\mathfrak{sl}(2, \mathbb{C})$, which is isomorphic to $\operatorname{Sym}^2(V)$. The tangent space of $T_G|_x$ corresponds to the Lie algebra 
$\mathfrak{sl}(4, \mathbb{C})$, which has a $15$ dimensional adjoint respresentation. The embedding $C \hookrightarrow \mathbb{P}^3$ induces a decomposition as
$$
T_G|_x \cong 
\operatorname{Sym}^4(V) \oplus \left( \operatorname{Sym}^2(V) \right)^{\oplus 3} 
\oplus
\operatorname{Sym}^0(V) 
$$
from which we obtain $T_{G \cdot x} |_x$. Therefore, by comparing irreducible summands in the exact sequence \eqref{nes}, we obtain the following
decomposition for $\mathcal{N}_{G \cdot x | \mathbb{P}^n}|_x$:
$$
\operatorname{Sym}^{10}(V) \oplus \operatorname{Sym}^8(V) \oplus \left( \operatorname{Sym}^6(V) \right)^{\oplus 2}
\oplus \operatorname{Sym}^4(V) \oplus \operatorname{Sym}^2(V) \oplus \operatorname{Sym}^0(V)
$$
from which we obtain our statement by \cite[Exer. 11.11]{fultonRp}.
\end{proof}

Next, we show that a quintic surface parametrized by a point close to $\omega$ can be written in a normal form.  
Let $G_{Q}$  be the stabilizer of $Q$; the 
$G_{Q}$-equivariant decomposition 
of $H^0(\mathbb{P}^3, \mathcal{O}_{\mathbb{P}^3}(5))$
induced by the quadric surface $(F_{Q}=0)$ is
$$ 
H^0(\mathbb{P}^3, \mathcal{O}_{\mathbb{P}^3}(5)) \cong 
W_5 \oplus W_3 \oplus W_1
$$
where $W_5 \cong \operatorname{Sym}^5(V) \times \operatorname{Sym}^5(V)$ is the space of quintic surfaces intersecting the quadric surface $Q$ along a $(5,5)$ curve, 
$W_{3} \cong \operatorname{Sym}^3(V) \times \operatorname{Sym}^3(V)$ corresponds to quintic surfaces decomposing as a union of the quadric surface $Q$ and a disjoint cubic surface $(F_3=0)$, 
and $W_1 \cong \operatorname{Sym}^1(V) \times \operatorname{Sym}^1(V)$  corresponds to the quintic surfaces decomposing as a union of a double smooth quadric surface $Q^2$ and an arbitrary hyperplane. 

If the equation of the 
quintic surface and the parametrization of the invariant conic are
\begin{align*}
F_{X_0}:=x_1\left( 
x_0x_3-x_2^2 - x_1^2
\right)^2
&; &
[s_0:s_1] \to [s_0^2:0:s_0s_1:s_1^2].
\end{align*}
We can write a polynomial parametrized by
$ \left( \operatorname{Sym}^{5}(V) \otimes \operatorname{Sym}^5(V) \right) $ 
as
\begin{align*}
F
&=
\sum_{k=0}^{i=2}
(x_0x_3-x_2^2)^k
\big(
f_{5-2k}(x_0,x_2)+g_{5-2k}(x_2,x_3)+ x_1(h_{4-2k}(x_0,x_2)+p_{4-2k}(x_2,x_3))
\big).
\end{align*}
There is a $\operatorname{Sym}^6(V) \subset W_3$ associated to the intersection of $(F_3=0)$, the quadric surface $Q$, and the hyperplane $H$.  A term parametrized by $\operatorname{Sym}^6(V)$ can be written as
\begin{align*}
F_Q(x_0,x_1,x_2,x_3) G(x_0,x_1,x_2,x_3) 
= (x_0x_3-x_2^2+x_1^2) \left( f_3(x_0,x_2)+g_3(x_2,x_3) \right).
\end{align*}
The previous discussion, Luna's slice theorem, and Lemma \ref{lb1} imply the following standardization lemma (for a similar result in quartic surfaces see \cite[Lemma 4.2]{shah4}).
\begin{prop}\label{kdq}
Let $(F_{X_0}=0)$ be a union of a smooth double quadric surface and a transversal hyperplane $H$.  We may modify a given family of quintic surfaces specializing to $(F_{X_0}=0)$ such that the new family 
is defined by an equation of the form
\begin{align*}
P_t(x_0,x_1,x_2,x_3) = F_{X_0}(x_0,x_1,x_2,x_3)+F_Q(x_0,x_1,x_2,x_3)G(t, x_0,x_1,x_2,x_3)+ F(t,x_0,x_1,x_2,x_3)
\end{align*}
where 
\begin{enumerate}
\item $F(t, x_0,x_1,x_2,x_3) \in \left( \operatorname{Sym}^{5}(V) \otimes \operatorname{Sym}^5(V) \right) \otimes \mathbb{C}[[t]]$ 
and $lim_{t \to 0} F_t \neq 0$.
\item $G(t, x_0,x_1,x_2,x_3) \in \operatorname{Sym}^6(V) \otimes \mathbb{C}[[t]]$ 
and $lim_{t \to 0} G_t \neq 0$.
\end{enumerate}
Moreover, the point in $\mathbb{P}(\mathcal{N}_x)$ corresponding to 
the limits of $lim_{t \to 0} F_t$ and $lim_{t \to 0} G_t$ is semi-stable and belongs to a minimal orbit. 
\end{prop}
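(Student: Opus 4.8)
The proof is an application of Luna's \'etale slice theorem together with the normal-space computation of Lemma \ref{lb1}, and it parallels the quartic standardization of \cite[Lemma 4.2]{shah4}. A one-parameter family of quintics degenerating to $(F_0=0)$ is an arc $\gamma\colon \spec{\mathbb{C}[[t]]}\to (\mathbb{P}^N)^{ss}$ whose special fibre lies on the closed orbit $G\cdot x$. Since $U=G\cdot V_x$ is open and $V_x$ is a $G_x$-invariant slice transverse to the orbit, the plan is first to act on the family by a (possibly $t$-dependent) element of $G$, i.e.\ by a change of the projective coordinates, so that $\gamma$ factors through $V_x$. Identifying $V_x$ \'etale-locally with a neighbourhood of the origin of $\mathcal{N}_x$, this absorbs every tangent-to-orbit contribution into the coordinate change and forces the deformation of $F_0$ to lie in the normal directions.

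Next I would make the decomposition explicit. Using the $G_x$-decomposition $H^0(\mathcal{O}_{\mathbb{P}^3}(5))=W_5\oplus F_Q W_3\oplus F_Q^2 W_1$ recalled before the statement, the point $F_0$ sits in the one-dimensional $Sym^0$ summand of $F_Q^2 W_1$, and a dimension count against Lemma \ref{lb1} shows that the orbit directions $T_{G\cdot x}|_x=Sym^4\oplus (Sym^2)^{\oplus 2}\oplus Sym^0$ fill out the complement of $\mathcal{N}_x=W_5\oplus Sym^6$ inside $F_Q^2 W_1\oplus F_Q W_3$, with $Sym^6\subset W_3$. Hence a normal deformation splits as a term $F_t\in W_5$, written through the $SL(2)$-invariant expansion of $Sym^5\otimes Sym^5$ displayed above, together with a term in the $Sym^6$ summand, which enters multiplied by $F_Q$ as $F_Q G_t$. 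This produces the asserted shape $P_t=F_0+F_Q G_t+F_t$.

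The remaining point, and the one I expect to be the main obstacle, is to normalize so that both constant terms survive. As the family \ref{sdq} already illustrates, the $W_5$ and $Sym^6$ directions are generically reached at different orders of $t$ (there the $F_Q W_3$ term appears at order $t$ and the $W_5$ term at order $t^2$), so naively sending $t\to 0$ annihilates one or both leading terms. To repair this I would exploit the residual $\mathbb{G}_m=C_G(G_x^0)$ acting on the slice $\mathcal{N}_x$ together with a reparametrization $t\mapsto t^{m}$: since this torus acts on $W_5$ and on the $Sym^6$ summand through weights that are not proportional, one can rescale $F_t$ and $G_t$ by balancing powers of $t$ so that their leading terms come into the same order, and then divide out the common power. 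The delicate part is precisely checking that the weights genuinely permit this balancing for the families of interest, so that after normalization $\lim_{t\to 0}F_t\neq 0$ and $\lim_{t\to 0}G_t\neq 0$ simultaneously, rather than obtaining this by a formal appeal to the slice theorem alone.

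Finally, the limit $[F_{t=0}:G_{t=0}]\in\mathbb{P}(\mathcal{N}_x)$ is the tangent direction of the normalized arc, and its $G_x$-semistability together with the minimality (closedness) of its orbit I would deduce from the Luna correspondence: the Kirwan exceptional divisor over $\omega$ is exactly $\mathbb{P}(\mathcal{N}_x)^{ss}\gquot G_x$, so the valuative criterion applied to the proper map from the Kirwan blow up lifts $\gamma$ with its special point landing on this locus, forcing the leading direction to be semistable with closed $SL(2,\mathbb{C})$-orbit. I would confirm this concretely by testing the normalized leading term against the explicit $SL(2)$-invariant models for $W_5$ and $F_Q\cdot Sym^6$ recorded just before the proposition.
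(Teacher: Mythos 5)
Your overall architecture is the same as the paper's, which is stated very tersely there: Proposition \ref{kdq} is presented as a direct consequence of Luna's slice theorem, of the normal-space computation of Lemma \ref{lb1}, and of the explicit decomposition $H^0(\mathbb{P}^3,\mathcal{O}_{\mathbb{P}^3}(5))\cong W_5\oplus F_Q\cdot W_3\oplus F_Q^2\cdot W_1$ worked out just before the statement, with Shah's standardization for quartics \cite[Lemma 4.2]{shah4} as the model. Your first two paragraphs (pushing the arc into the slice by a $t$-dependent coordinate change, and the bookkeeping showing that the orbit directions $Sym^4(V)\oplus\left(Sym^2(V)\right)^{\oplus 2}\oplus Sym^0(V)$ exhaust $\left(F_QW_3\oplus F_Q^2W_1\right)/\langle F_0\rangle$ except for the $Sym^6(V)$ block) are correct and match this, and your final paragraph, deducing semistability and minimality of the limiting direction from properness of the Kirwan blow up, is also the intended mechanism.

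The genuine gap is the balancing step, which you yourself identify as the crux. The group $C_G(G_x^0)$ is indeed a one-dimensional torus up to finite index, but it does not stabilize $x=[F_0]$ and therefore does not act on the Luna slice: writing $q=x_0x_3-x_2^2$, a centralizing element acts by $x_1\mapsto bx_1$ and $x_i\mapsto ax_i$ for $i\in\{0,2,3\}$ with $a^3b=1$, and it sends $F_0=x_1(q-x_1^2)^2$ to $bx_1(a^2q-b^2x_1^2)^2$, which is proportional to $F_0$ only when $a^2=b^2$; thus $C_G(G_x^0)\cap G_x$ is finite, and the torus instead degenerates the smooth quadric to the quadric cone, an unstable configuration by Corollary \ref{2qh}. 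It does not even preserve the decomposition $W_5\oplus F_QW_3\oplus F_Q^2W_1$, since $F_Q$ is not one of its eigenvectors. Consequently the only one-parameter subgroups acting on $\mathcal{N}_x$ are those of $G_x^0\cong SL(2,\mathbb{C})$, and each of these acts on both summands $Sym^5(V)\otimes Sym^5(V)$ and $Sym^6(V)$ with overlapping even weight sets (both containing $0$); none acts by a summand-dependent scalar, so no such subgroup can rescale $F_t$ relative to $G_t$. Indeed, no modification at all can equalize the naive orders in general: for $P_t=F_0+tf_5$ with $f_5\in W_5$ generic, separatedness of the Kirwan blow up forces any semistable leading direction of a modified arc to remain in the closed orbit $G_x\cdot[(f_5,0)]$, whose $Sym^6(V)$-component vanishes identically. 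The order matching that the statement encodes, visible in the family \ref{sdq} where the $Sym^6(V)$ datum enters at order $t$ and the $W_5$ datum at order $t^2$ exactly as in \cite[Lemma 4.2]{shah4}, comes from the intrinsic weighting of the two blocks (the $Sym^6(V)$ block is multiplied by $F_Q$), and is implemented by a base change $t\mapsto t^m$ together with a renormalization of the equation in the spirit of Shah's completing-the-square argument, the limits being read off from the leading coefficients in that weighted sense; it is not produced by a torus in the centralizer, so as written your paragraph 3 rests on a group action that does not exist.
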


Next, we describe a similar analysis for the other boundary components. In the following statement, an exponent $n$ means that the corresponding entry is repeated $n$ times.

\begin{prop}\label{lb234}
The fiber of the Kirwan blow up over  $x \in \Lambda_2$ is
$$
\mathbb{P}(10,9,8,7^2,6^3,5^3,4^2,3^2,2^3,1^2) 
\times
\mathbb{P}(10,9,8,7^2,6^3,5^3,4^2,3^2,2^3,1^2).
$$
The exceptional divisor associated to the Kirwan blow up of $x \in \Lambda_3$ is $W\mathbb{P}^{18}_a \times W\mathbb{P}^{21}_b $ where
$$
W \mathbb{P}^{18}_a \cong \mathbb{P}(25,21,18,17,16,14,13,12,11,10,9,8,7,6,5,4,3,2,1)
$$
and
$$
W \mathbb{P}^{21}_b \cong \mathbb{P}(1^2,2,3,4,5,6^2,7,8,9,10^2,11^2,12,13,14,15,16,18,20).
$$
The fiber of the Kirwan blow up over $x \in \Lambda_4$ is
$$
\mathbb{P}(15,12,11,10,9,8,7^2,6^2,5,4, 3^2,2^2,1)
\times
\mathbb{P}(1^2,2^3,3^3,4^3,5^3,6^3,7^2,8^2,9,10).
$$
\end{prop}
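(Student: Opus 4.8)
The plan is to run, for each $i\in\{2,3,4\}$, the same normal-bundle computation used for $\omega$ in Lemma \ref{lb1}, the only change being that the connected stabilizer of a closed orbit over $\Lambda_i$ is now $G_x^0\cong\mathbb{G}_m$ generated by the normalized $\lambda_i$ (Lemma \ref{mxsblz}) instead of $SL(2,\mathbb{C})$. Because a $\mathbb{G}_m$-module is determined by its weight multiset, the whole representation-theoretic input reduces to recording the $\lambda_i$-weights on $\mathcal{N}_x=\mathcal{N}_{G\cdot x\,|\,\mathbb{P}^N}|_x$. I would obtain them from the localized sequence $0\to\mathcal{T}_{G\cdot x}\to\mathcal{T}_{\mathbb{P}^N}|_x\to\mathcal{N}_x\to0$: by the Euler sequence the weights of $\mathcal{T}_{\mathbb{P}^N}|_x$ are the $\lambda_i$-weights of the $56$ quintic monomials with the single weight-zero line spanned by $\overline{F}_{\lambda_i}$ removed, while those of $\mathcal{T}_{G\cdot x}|_x$ are the weights of $\mathfrak{sl}(4,\mathbb{C})/\mathbb{C}\lambda_i$, namely the twelve roots $a_k-a_l$ together with two zero weights from the Cartan. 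Cancelling the second list inside the first produces the weight decomposition $\mathcal{N}_x=\mathcal{N}_x^{>0}\oplus\mathcal{N}_x^{0}\oplus\mathcal{N}_x^{<0}$.

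Next I would identify the Kirwan fibre intrinsically. The summand $\mathcal{N}_x^{0}$ consists of the $\mathbb{G}_m$-invariant first-order deformations, i.e. the directions along the stratum $\Lambda_i$ that we blow up, and as a numerical check one expects $\dim\mathcal{N}_x^{0}=\dim\Lambda_i$; for $\lambda_2$ this holds on the nose, since there are exactly four weight-zero quintic monomials, all in the support of $\overline{F}_{\lambda_2}$, giving $\dim\mathcal{N}_x^{0}=4-1-2=1$. The fibre of the blow up over $x$ is then the quotient of the projectivized normal cone to the stratum, $\mathbb{P}(\mathcal{N}_x^{>0}\oplus\mathcal{N}_x^{<0})^{ss}\gquot\mathbb{G}_m$. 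For a $\mathbb{G}_m$-action on a projective space all of whose weights are nonzero, a point is semistable exactly when it has nonzero coordinates of both signs, so the semistable locus is $(\mathcal{N}_x^{>0}\setminus0)\times(\mathcal{N}_x^{<0}\setminus0)$ up to scaling, and a short toric computation shows that its quotient is the product of weighted projective spaces $\mathbb{P}(\mathcal{N}_x^{>0})\times\mathbb{P}(\mathcal{N}_x^{<0})$, the weights being the corresponding $\lambda_i$-weights. This is the structural reason the answer is a product, and reading off the two weight lists yields the stated exponents. For $\Lambda_2$ the involution $x_0\leftrightarrow x_3,\ x_1\leftrightarrow x_2$ carries $\lambda_2$ to $-\lambda_2$, forcing $\mathcal{N}_x^{>0}$ and $\mathcal{N}_x^{<0}$ to have identical weights, which explains why the two factors coincide; for $\Lambda_3$ and $\Lambda_4$ I would use either representative $\lambda_3,\lambda_6$ and $\lambda_4,\lambda_5$ of the orbit, as in Theorem \ref{gitboundary}.

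The principal difficulty is bookkeeping rather than conceptual. For each of the three subgroups one must tabulate the $\lambda_i$-weight, with multiplicity, of all $56$ monomials, subtract the twelve root weights and the $\overline{F}_{\lambda_i}$ line, and check both that the residual zero-weight multiplicity equals $\dim\Lambda_i$ and that the surviving positive and negative lists are precisely those in the statement; this is exactly the kind of computation the paper elsewhere delegates to Macaulay2, and I would verify it there. A second, finer point is the passage from the raw weight lists to an honest weighted projective structure: the $\mathbb{G}_m$-quotient is a weighted projective space only after the usual normalizations (clearing common factors and absorbing the overall scaling), so I would confirm that $\mathbb{P}(\mathcal{N}_x^{>0})\times\mathbb{P}(\mathcal{N}_x^{<0})$ with the listed weights is the correct coarse space, and reconcile the dimension count $\dim\mathbb{P}(\mathcal{N}_x^{>0})+\dim\mathbb{P}(\mathcal{N}_x^{<0})=\dim\mathcal{N}_x-2$ with $\dim\Lambda_i$ and with the codimension-one exceptional divisor. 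In particular, over the point $\Lambda_3$ the two factors have dimensions $18$ and $21$, so the fibre is $W\mathbb{P}^{18}_a\times W\mathbb{P}^{21}_b$ of dimension $39$, matching the full exceptional divisor over the zero-dimensional stratum.
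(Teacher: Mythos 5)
Your weight computation is exactly the paper's proof: the paper likewise localizes the Euler sequence \ref{eusq} at $x=[\overline{F}_{\lambda_k}]$ (so $\mathcal{T}_{\mathbb{P}^N}|_x$ carries the $56$ monomial weights minus one zero), uses the sequence \ref{gg} to identify $\mathcal{T}_{G\cdot x}|_x$ with $\mathfrak{sl}(4,\mathbb{C})/\mathbb{C}\lambda_k$ (the twelve roots $a_i-a_j$ plus two Cartan zeros), and reads off $\mathcal{N}_x$ from \ref{nes}. Your added sanity checks (zero-weight multiplicity equal to $\dim\Lambda_i$, the involution explaining the symmetry of the two factors for $\Lambda_2$, the dimension counts $18+21=39$, etc.) are correct and slightly exceed what the paper records.

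The one genuinely weak point is precisely the step you add and the paper leaves implicit: the identification of the fiber $\mathbb{P}(\mathcal{N}_x^{>0}\oplus\mathcal{N}_x^{<0})^{ss}\gquot\mathbb{G}_m$ with the product $\mathbb{P}(\mathcal{N}_x^{>0})\times\mathbb{P}(\mathcal{N}_x^{<0})$. Your description of the semistable locus is right, but the asserted ``short toric computation'' does not yield a product in general. The quotient is $\bigl((\mathcal{N}^{>0}\setminus 0)\times(\mathcal{N}^{<0}\setminus 0)\bigr)$ modulo the two-torus generated by the overall scaling (all weights $1$) and by $\lambda$ (weights $c_i$ on $\mathcal{N}^{>0}$, $-d_j$ on $\mathcal{N}^{<0}$); this subtorus coincides with the torus $\mathbb{C}^*_{\mathbf{c}}\times\mathbb{C}^*_{\mathbf{d}}$ whose quotient is $\mathbb{P}(\mathbf{c})\times\mathbb{P}(\mathbf{d})$ only when all the $c_i$ are equal and all the $d_j$ are equal. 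A minimal counterexample: weights $(1,2)$ and $(-1,-1)$ give the toric variety with rays $(1,0),(0,1),(1,1),(-1,-1)$, which is the Hirzebruch surface $\mathbb{F}_1$, not $\mathbb{P}(1,2)\times\mathbb{P}^1\cong\mathbb{P}^1\times\mathbb{P}^1$. In general the quotient is a Picard-rank-two toric variety whose fan has rays $\bar e_i,\bar f_j$ subject to the relations $\sum_i\bar e_i+\sum_j\bar f_j=0$ and $\sum_i c_i\bar e_i=\sum_j d_j\bar f_j$, and it splits as a product only in degenerate cases; for the non-constant weight lists appearing here it does not. To be fair, the paper's own proof stops once the weights of $\mathcal{N}_x$ are computed and simply asserts the product form in the statement, so you are not missing anything the paper supplies; but as a self-contained proof of the proposition as literally stated, this last step would fail, and it needs either a genuine argument or a reformulation of the conclusion as recording the two weight multisets of $\mathcal{N}_x^{\pm}$ rather than claiming a literal product decomposition of the fiber.
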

\begin{proof} 
Let $x$ be a semi-stable point with closed orbit mapping to the GIT boundary $\Lambda_2$, $\Lambda_3$, or $\Lambda_4$. Our statement follows after finding the eigenvalues associated to the action of the stabilizer
$G_x^0 \cong \mathbb{C}^*$ on the normal bundle $\mathcal{N}_x$. Given a one-parameter subgroup 
$\lambda_k$ with $k=\{ 2,3,4 \}$, the $\lambda_k$ equivariant decomposition 
$
W \cong \bigoplus_{a_i} V_{a_i} 
$ 
induces a decomposition of the space of monomials
$
\operatorname{Sym}^5(W)
=
\bigoplus_{\alpha} V_{\alpha}^{\oplus n_{\alpha}}
$. 
We can choose the point $x$, so it parametrizes quintic surfaces given by the Equations 
\eqref{mo2}, \eqref{mo3}, and \eqref{mo4}. To calculate $\mathcal{T}_{\mathbb{P}^N}$, we use the Euler sequence \eqref{eusq}. The line bundle $\mathcal{O}_{\mathbb{P}^N}$ and $ \mathcal{O}_{\mathbb{P}^N}(1) = V_0$ has weight zero. At $x$, from the Euler sequence we obtain
$$
0 
\to 
V_0
\to 
V_0 \otimes \bigoplus_{\alpha} V_{\alpha}^{\oplus n_{\alpha}}
\to 
\mathcal{T}_{\mathbb{P}^N} |_x
\to 
0
$$
from which we obtain the decomposition of $\mathcal{T}_{\mathbb{P}^N}|_{x}$. To obtain the decomposition of
$\mathcal{T}_{G.x}|_x$, we use the exact sequence \eqref{gg}. The tangent space to $G$ is the Lie algebra $\mathfrak{sl}(4, \mathbb{C})$, and $\mathcal{T}_{G_x}$ is the adjoint representation of $\lambda_k \cong \mathbb{G}_m$. The one-parameter subgroup $\lambda_k$ acts by conjugation on $\mathfrak{sl}(4, \mathbb{C})$ with eigenvalues of the form $a_i-a_j$ for all $i,j$. Therefore, the exact sequence \eqref{gg} 
becomes
$$
0 \to V_0 \to 
\bigoplus_{i,j} V_{(a_i-a_j)}
\to \mathcal{T}_{G \cdot x}|_x \to 0.
$$
The expression of the normal bundle for each $\lambda_k$ follows from the exact sequence
\eqref{nes}.
\end{proof}

%%%%%%%%%%%%%%%%%%
\section{Stable Isolated Singularities}\label{isosing}

In this section, we complete the characterization of stable normal quintic surfaces. We also discuss the role of invariants from singularity theory in determining the stability of these surfaces.

\subsection{Stability of Triple Point Singularities}

By Corollary \ref{cor:isolateddp}, we know that 
a quintic surface whose singularities are either  isolated double points or isolated triple point singularities with reduced tangent cone is stable. Quadruple points are unstable
by Proposition \ref{prop:q4pt}. Next, we consider triple point singularities with non-reduced tangent cone.  
\begin{rmk}
A surface singularity is of type
$\tilde{E}_8 $ if at the completion of the local ring its equation is equivalent to $z^2+x^3+y^6+tx^2y^2$ with $4t^3+27 \neq 0$.  Similarly, a singularity is of type $Z_{13}$ if its equation can be written as  $z^2+x^3y+y^6+axy^5$. 
A singularity is of type $W_{1,0}$ if its equation can be written as 
$x^4+(a_0+y)x^2y^3+y^6$ with $a_0^2 \neq 4$
 (for details see \cite{arnoldinvetiones}).
\end{rmk}

\begin{prop}\label{trpq}
Let $X$ be a normal quintic surface with a triple point singularity with non-reduced tangent cone at $p \in X$.  
We suppose that any other singularity of $X$ is either an isolated double point or a triple point with reduced tangent cone.  Let $Bl_{p}X$ be the monomial transformation of $X$ with center at $p$. 
\begin{enumerate}[label*=\arabic*.] 
\item If the tangent cone of $p \in X$ is a union of a double plane and another plane, then $X$ is non-stable if and only if $Bl_{p}X$ has either a line of singularities, 
a singularity of type $Z_{13}$, or a degeneration of it. 
\item
 If the tangent cone of $p \in X$ is a triple plane, then $X$ is non-stable if and only if $Bl_{p}X$  has either a line of singularities, a singularity 
of type $\tilde{E}_8$, a singularity of type $W_{1,0}$, or a degeneration of them. 
\end{enumerate}
\end{prop}
\begin{proof} 
We first describe representations of quintic surfaces with a triple point as double covers of $\mathbb{P}^2$. 
Let $p \in X$ be a triple point on a reduced quintic surface which contains only a finite number of lines through $p$. 
If $Bl_{p}X \to X$ is the monomial transformation of $X$ 
centered at the triple point, then  we have a natural morphism  $Bl_{p}X \to \mathbb{P}^2$. Consider its Stein factorization $Bl_{p}X \to X^* \to \mathbb{P}^2$, such that $X^*$ is a double cover of $\mathbb{P}^2$ branched over an octic plane curve $B(X)$. If the equation associated to the quintic surface is 
$$
F_X(x_0,x_1,x_2,x_3):= x_3^2f_3(x_0,x_1,x_2) +x_3f_4(x_0,x_1,x_2)+f_5(x_0,x_1,x_2),
$$
then
\begin{align}\label{Bx}
F_{B(X)}= f_3(x_0,x_1,x_2)f_5(x_0,x_1,x_2) -f_4(x_0,x_1,x_2)^2.
\end{align}
The map $Bl_{p}X  \to X^*$ contracts the proper transform of the lines $ L \subset X$ through the triple point, and it is an isomorphism everywhere else. 
Thus, if there is no a line in $X$ passing through $p$, it holds $Bl_{p}X \cong X^*$.
If the singularity of $X$ is supported at $p$, then the singularities on $Bl_{p}X$ are supported in the exceptional divisor, E, of the monomial transformation. The reduced image of 
$E$  in $\mathbb{P}^2$ is the curve defined by  
$f_3(x_0,x_1,x_2)=0$. By using partial derivatives, we can see that the singularities of $Bl_{p}X$  are supported at
$$
\operatorname{Sing} \big( f_3(x_0,x_1,x_2) =0 \big) \cap \big( f_4(x_0,x_1,x_2) =0 \big).
$$
Next, we prove our results.

First, we suppose that $X$ is non-stable.  By our hypotheses and the results of  Section \ref{sec:GIT}, 
there is a change of coordinates such that  $\Xi_{F_X} \subset M^{\oplus}(\lambda_i)$ for 
$i \in \{ 1,3,4 \}$ (see Table \ref{cr1ps}).  From the equations in the proof of Proposition \ref{prop:GIT}, we obtain the following:
\begin{enumerate}
\item If $\Xi_{F_X} \subseteq M^{\oplus}(\lambda_1)$, then 
the tangent cone of $T_pX$ is a union of a double plane and another plane, and
$Bl_pX$ has a line of singularities. 
\item If $\Xi_{F_X} \subseteq M^{\oplus}(\lambda_3)$, then we have two options:
First, the tangent cone of $T_pX$ is a union of a double plane and a different plane; and
$Bl_pX$ has either a singularity of type $Z_{13}$ or a degeneration of it.
Second,  the tangent cone of $T_pX$ is a triple plane and $Bl_pX$ has either a singularity of type $W_{1,0}$ or a degeneration of it.
\item  If $\Xi_{F_X} \subseteq M^{\oplus}(\lambda_4)$, then 
the tangent cone of $T_pX$ is a triple plane and  $Bl_pX$ has either a $\tilde E_8$ singularity
or a degeneration of it.
\end{enumerate}
If $X$ is a non-stable normal quintic surface with a triple point singularity,  then we can find a general deformation of $X$ that preserves the type of $M^{\oplus}(\lambda_i)$.   
It follows that $X$ has the described singularities. 

Next, we show that if $Bl_pX$ has the mentioned singularities, then $X$ is non-stable. The case whenever  $Bl_p X$ is non-normal follows from Corollary \ref{convL1}.  Now we suppose that  $Bl_pX$ has a singularity of type $Z_{13}$  and the tangent cone of $X$ at $p$ is the union of a plane and another one.  We may assume the triple point is supported at $p_3$, and its tangent cone is given by $x_0^2l(x_0,x_1)$. So every point in $B(X)$ inducing a singular point in $Bl_{p}X$ is supported at $(x_0=0) \cap B(X)$. There is a point 
$q$ in $(x_0=0) \cap B(X)$ of multiplicity at least four
because $Bl_{p}X$ has a non-ADE singularity. We can take $q$ to be supported at $[0:0:1]$. 
The normal form of $Z_{13}$ implies that 
the tangent cone of $B(X)$ at $[0:0:1]$ can be taken to be 
$x_0^3g_1(x_0,x_1)$. Then the equation of $B(X)$ can be written as
\begin{align*}
F_{B(X)} & = x_0^2f_1(x_0,x_1)\left(
x_2^4x_0+x_2^3f_2(x_0,x_1)+x_2^2f_3(x_0,x_1)+x_2f_4(x_0,x_1)+f_5(x_0,x_1)
\right) 
\\
& \qquad{}
+
\left( x_2^2x_0^2+x_2f_3(x_0,x_1)+f_4(x_0,x_1) \right)^2.
\end{align*}
By comparing $F_{B(X)}$ to $F_{\lambda_3}$, we obtain that $\Xi_{F_X} =M^{\oplus}(\lambda_3)$.

Now we suppose that the tangent cone of $p \in X$ is a triple plane and $Bl_pX$
as an $\tilde E_8$ singularity.  We may select a coordinate system such that the triple point of $X$ is supported   at $[0:0:0:1]$,  and the tangent cone is supported at $(x_0=0)$.
By our hypothesis, $Bl_pX$ has a $\tilde E_8$ singularity. Then 
 $B(X)$ has a semi-quasi-homogeneous singularity of degree $6$ with respect to the weights $w(x)=3$ and $w(y) =2$ at $[0:0:1]$. These weights determine the $\tilde{E}_8$ singularity
in the double cover. The most general equation for such an octic plane curve can be written as 
\begin{align*}
F_{B_X}= 
x_0^3f_5(x_0,x_1,x_2)
-(x_2^2x_0f_1(x_0,x_1)+x_2f_3(x_0,x_1)+f_4(x_0,x_1))^2.
\end{align*}
By comparing $F_{B(X)}$ to $F_{\lambda_4}$, we obtain that $\Xi_{F_X} =M^{\oplus}(\lambda_4)$.

Now we suppose that the tangent cone of $p \in X$ is a triple plane, and $Bl_pX$
has an $W_{1,0}$ singularity.  By the same argument as  above, we find that 
\begin{align*}
F_{B(X)} & = x_0^3\left(
x_2^4x_0+x_2^3f_2(x_0,x_1)+x_2^2f_3(x_0,x_1)+x_2f_4(x_0,x_1)+f_5(x_0,x_1)
\right) 
\\
& \qquad{}
+
\left( x_2^2x_0^2+x_2f_3(x_0,x_1)+f_4(x_0,x_1) \right)^2,
\end{align*}
and we obtain that $\Xi_{F_X} \subsetneq M^{\oplus}(\lambda_3)$.

From the above discussion, it follows that if $Bl_{p}X$ has a singularity as described in the statement, then  there is a coordinate system such that  $ \Xi_{F_X}$ is contained in $M^{\oplus}(\lambda_i)$ for $i \in \{1,3,4 \}$, and $X$ is non-stable.
\end{proof}

%%%%%%%%%%
\subsection{Invariants of Singularities and GIT Stability}

Next, we relate the stability of normal quintic surfaces  to the study of invariants associated to their singularities. We start with  Milnor number and modality. They are invariants used in the classification of singularities due to Arnold \cite{arnoldinvetiones}, Suzuki, Yoshinaga \cite{watanabeyoshi,m5}, and Estrada et al. \cite{m6}. 
\begin{prop} \label{boundinv}
If $X$ is a normal quintic surface where each of its singularities has either Milnor number smaller than 22 or modality smaller than 5,  then $X$ is stable. 
\end{prop}
\begin{proof}
We prove it by contradiction. If the surface $X$ is not stable, then the GIT analysis implies there is a coordinate system such that $\Xi_{F_X}$ is contained in one of the 
$M^{\oplus}(\lambda_i)$ for $\lambda_i$ as in Proposition \ref{prop:maximalsemistableops}. In particular, a destabilizing isolated singularity of $X$ is supported at $p_3$, and it deforms to the singularity of $(F_{\lambda_i}=0)$ for $\lambda_1$, $\lambda_3$, $\lambda_4$, or $\lambda_7$ (see Table \ref{cr1ps}).
We consider a general deformation of $X$ that preserves the type $M^{\oplus}(\lambda_i)$ with respect to the given choice of coordinates. By Theorem \ref{gitboundary}, the singularities at $(F_{\lambda_i}=0)$ are either $V_{24}^{*}$ (notation as \cite{m6}),  $V_{24}^{*1}$ (notation as \cite{m5}),  $V_{22}^{\prime}$ (notation as \cite{m5}), or an ordinary quadruple point. 

Now we use the fact that the Milnor number of the $V_{24}^{*}$ singularity is 24 and its modality is 7. The Milnor number of the $V_{24}^{*1}$ singularity is 24 and its modality is 5. 
The Milnor number of the $V_{22}^{\prime}$  singularity is 22 and its modality is 5.
The Milnor number of a quadruple point is at least $27$ and its modality is at least $6$ (see \cite{m5}, \cite{m6}). Therefore, the statement follows by the upper semi-continuity of both the Milnor number and  modality.
\end{proof}
\begin{ex}  
The previous bound  in the Milnor number  is  not a necessary condition for stability. 
Indeed, at $(t \neq 1)$ the zero set of the equation
$$
F_t(x_0,x_1,x_2,x_3)= x_0^3x_3^2+2x_2x_1^3x_3 +x_2^5
+t \big( x_3^3x_2^2-3x_0^2x_1^2x_3+
3x_0x_1^4+x_2^3x_0^2 \big)
$$
has a weakly elliptic singularity at $p_3$, which is formally equivalent to the singularity induced by the equation $x^2+y^3+z^{13}$ 
(see \cite[pg 452]{yang}). This singularity has Milnor number equal to $24$
and is stable by Corollary \ref{cor:isolateddp}. The zero set 
$(F_0(x_0,x_1,x_2,x_3)=0)$ is a non-normal surface parametrized by  $\Lambda_4$.
\end{ex}
For a non-log-canonical singularity $p\in X$, the log-canonical threshold $c_p(X)$ is an invariant valued between $0$ and $1$ such that the smaller its value, the worse  the singularity (see \cite[pg. 45]{singpairs} for definitions and details). 
The relationship between the log-canonical threshold and the GIT stability given in 
Lemma \ref{prop:lctstable} below was first noticed by Hacking \cite[Prop 10.4]{hacking} and Kim  and Lee \cite[Rmk. 2.4]{kim}.
\begin{lemma}
\label{prop:lctstable}
A normal quintic surface having at worst a singularity with log-canonical threshold 
(equal to) greater than $4/5$ is (semi-) stable. 
\end{lemma}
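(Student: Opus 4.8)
The plan is to argue by contraposition, converting a destabilizing one parameter subgroup into a monomial valuation and comparing it with the log canonical threshold. Suppose $X$ is \emph{not} stable (resp.\ \emph{not} semistable). By the numerical criterion recalled in Section~\ref{sec:GIT} there are coordinates $[x_0:x_1:x_2:x_3]$ and a normalized one parameter subgroup $\lambda = \mathrm{diag}(t^{a_0},\dots,t^{a_3})$, with $a_0\geq\cdots\geq a_3$ and $\sum_i a_i=0$, for which $\mu(\lambda,X)\geq 0$ (resp.\ $\mu(\lambda,X)>0$). Since $\lambda$ is nontrivial we have $a_3<0$. The monomial $x_3^5$ has weight $\lambda.x_3^5=5a_3<0\leq\mu(\lambda,X)$, so $x_3^5\notin\Xi_{F_X}$; hence $p_3=[0:0:0:1]\in X$, and indeed the inequality $\mu(\lambda,X)\geq 0$ excludes every affine monomial that would make $p_3$ a smooth point. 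I would then pass to the chart $x_3=1$, writing the local equation as $f(x_0,x_1,x_2)=F_X(x_0,x_1,x_2,1)$, so that $c_{p_3}(X)$ equals the log canonical threshold of $f$ at the origin of the smooth threefold $\mathbb{A}^3_{x_0,x_1,x_2}$.

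The heart of the argument is a monomial valuation $v$ on $\mathbb{A}^3$ centered at $p_3$ with weights $w_j=a_j-a_3\geq 0$ (not all zero, as $\lambda\neq 1$). For a degree five monomial $m=x_0^{i_0}x_1^{i_1}x_2^{i_2}x_3^{i_3}\in\Xi_{F_X}$, substituting $i_0+i_1+i_2+i_3=5$ gives the clean identity
\[
v(\overline{m})=\sum_{j=0}^{2}(a_j-a_3)\,i_j=\lambda.m-5a_3,
\]
whence $v(f)=\mu(\lambda,X)-5a_3$. The log discrepancy of a monomial valuation on a smooth variety is the sum of its weights, so $A(v)=\sum_{j=0}^{2}w_j=(a_0+a_1+a_2)-3a_3=-4a_3$. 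Applying the comparison between the log canonical threshold and weighted multiplicities (\cite[Prop.~2.1]{kuwata}, \cite[Prop.~8.14]{singpairs}), or equivalently the valuative inequality $c_{p_3}(X)\leq A(v)/v(f)$ valid for every nonzero monomial valuation, yields
\[
c_{p_3}(X)\;\leq\;\frac{A(v)}{v(f)}\;=\;\frac{-4a_3}{\mu(\lambda,X)-5a_3}.
\]

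Since $-4a_3>0$ and $v(f)=\mu(\lambda,X)-5a_3\geq -5a_3>0$, the right hand side is at most $\tfrac{-4a_3}{-5a_3}=\tfrac45$, and the bound is strict precisely when $\mu(\lambda,X)>0$. Thus failure of stability forces $c_{p_3}(X)\leq 4/5$ and failure of semistability forces $c_{p_3}(X)<4/5$; contrapositively, a quintic surface whose singularities all have log canonical threshold larger than (resp.\ at least) $4/5$ is stable (resp.\ semistable). I expect the main obstacle to be bookkeeping rather than conceptual: verifying the weight identity $v(f)=\mu(\lambda,X)-5a_3$ together with $A(v)=-4a_3$, checking that $v$ is genuinely centered at $p_3$ so that the local bound at that point applies, and justifying the lct comparison in the degenerate case where some $w_j=0$ (handled by the fact that $c_{p_3}(X)\leq A(v)/v(f)$ holds for any nonzero quasi-monomial valuation). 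It is exactly the normalization $(n+1)/d=4/5$ for degree $d=5$ hypersurfaces in $\p{3}$ that makes the threshold in the statement sharp.
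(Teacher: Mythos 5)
Your proof is correct and is essentially the argument the paper relies on: the paper does not spell out a proof but cites Hacking \cite[Prop 10.4]{hacking} and Kim \cite[Lemma 2.1]{lee2008chow}, whose content is exactly your computation — converting a destabilizing normalized 1-PS into a monomial valuation with weights $a_j-a_3$ and invoking the weight-based bound on the log canonical threshold (\cite[Prop 2.1]{kuwata}, \cite[Prop 8.14]{singpairs}) to get $c_{p_3}(X)\leq \frac{-4a_3}{\mu(\lambda,X)-5a_3}\leq \frac{4}{5}$. Your write-up fills in the details (including the degenerate-weight case and the strict versus non-strict dichotomy) that the paper outsources to those references.
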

\begin{rmk}
The converse of \cite[Rmk. 2.4]{kim} 
does not hold in general. For example, there are semi-stable quartic plane curves with an 
$A_5$ singularity.
\end{rmk}
Next,  we describe   a natural family of singularities, called minimal elliptic, with a
 log-canonical threshold greater than $4/5$. 
%The geometric genus of a singularity $p \in X$ is, in our case, the higher dimensional analogous of the classical genus drop invariant $\delta$ for plane curves singularities (see Proposition \ref{effectgenus}).
\begin{defn}\label{pg}
Let $X$ be a normal surface singular at $p$, the geometric genus of the singularity is
$\dim( R^1\pi_{*} \mathcal{O}_Y)$ where
$\pi: Y \to X$ is a resolution of $X$ at $p$. 
\end{defn}
This invariant induces a well-known classification of singularities: \emph{Rational singularities} are those for which the geometric genus is zero. For surfaces, the rational Gorenstein surface singularities are the ADE ones. After rational surface singularities, we find the family of minimal elliptic singularities classified by Laufer
\cite{laufer}. Next, we provide not the original definition of minimal elliptic singularities, but rather a convenient one. Recall that we work with isolated hypersurface singularities which are always Gorenstein.
\begin{defn}
\label{defn:minimalelliptic}
(\cite[Th. 3.10] {laufer})
A surface singularity is minimal elliptic if and only if it is Gorenstein and 
$ 
\dim R^1\pi_* \left( \mathcal{O}_Y \right) 
=1
$.
\end{defn} 
An important application of the log-canonical threshold criterion is the GIT stability of the minimal elliptic singularities.
\begin{prop}\label{prop:stableminimalelliptic}
Let $X \subset \p{3}$ be a normal quintic surface with either ADE or  minimal elliptic singularities. Then $X$ is stable. 
%In fact, the minimum value reached by  the log-canonical threshold is $ \frac{4}{5} + \frac{1}{180}$.
\end{prop}
\begin{proof} 
An analysis of the log-canonical threshold for minimal elliptic singularities in a hypersurface in $\mathbb{P}^3$ is done by Prokhorov in \cite[Table 1-3]{prokhorov}. In particular, their log-canonical value is greater than or equal to $(\frac{4}{5}+\frac{1}{180})$. Therefore, they are GIT stable by Lemma
\ref{prop:lctstable}. 
\end{proof}
The genus of a singularity $p\in X$ can be interpreted by its effect on the geometric genus, $p_g(X)$, of the variety $X$. We include a proof for completeness.
\begin{prop}\label{effectgenus}
Given the minimal resolution $\pi:Y \to X$ of a normal hypersurface of degree $d$, with a unique non-ADE singularity of genus $R^1 \left( \pi_{*} \mathcal{O}_Y \right)$, 
we  have
$$
\frac{(d-1)(d-2)(d-3)}{6} - p_g(Y)+ q(Y)=R^1 \left( \pi_{*} \mathcal{O}_Y \right).
$$ 
Furthermore, if $X$ is a quintic surface and $Y$ is of general type, then $q(Y)=0$, and we have
$$
4 -p_g(Y)=R^1 \left( \pi_{*} \mathcal{O}_Y \right).
$$
\end{prop}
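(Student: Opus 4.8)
The plan is to read off the identity from the low-degree exact sequence of the Leray spectral sequence for $\pi$ applied to the structure sheaf $\mathcal{O}_Y$. Since the fibres of a resolution of a surface are at most one-dimensional, $R^q\pi_*\mathcal{O}_Y=0$ for $q\geq 2$, so the spectral sequence has only two nonzero rows and collapses to a long exact sequence. Normality of $X$ gives $\pi_*\mathcal{O}_Y=\mathcal{O}_X$, and the only contribution to $R^1\pi_*\mathcal{O}_Y$ comes from the unique non-DuVal point $p$, the DuVal singularities being rational; hence $R^1\pi_*\mathcal{O}_Y$ is a skyscraper supported at $p$ whose stalk has length the geometric genus $R^1(\pi_*\mathcal{O}_Y)$. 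In particular $H^0(X,R^1\pi_*\mathcal{O}_Y)$ has that dimension and $H^1(X,R^1\pi_*\mathcal{O}_Y)=0$, so the sequence truncates to
$$0 \to H^1(X,\mathcal{O}_X) \to H^1(Y,\mathcal{O}_Y) \to R^1(\pi_*\mathcal{O}_Y) \to H^2(X,\mathcal{O}_X) \to H^2(Y,\mathcal{O}_Y) \to 0.$$

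Next I would compute the two cohomology groups of $\mathcal{O}_X$ directly on the ambient space, using the defining sequence $0\to\mathcal{O}_{\mathbb{P}^3}(-d)\to\mathcal{O}_{\mathbb{P}^3}\to\mathcal{O}_X\to 0$. The associated long exact sequence, combined with $H^i(\mathbb{P}^3,\mathcal{O})=0$ for $0<i<3$ and $H^2(\mathbb{P}^3,\mathcal{O}(-d))=0$, yields $H^1(X,\mathcal{O}_X)=0$ and $H^2(X,\mathcal{O}_X)\cong H^3(\mathbb{P}^3,\mathcal{O}(-d))$. Serre duality identifies the latter with $H^0(\mathbb{P}^3,\mathcal{O}(d-4))^{\vee}$, whose dimension is $\binom{d-1}{3}=\frac{(d-1)(d-2)(d-3)}{6}$. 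Together with the smooth-surface identifications $H^1(Y,\mathcal{O}_Y)=q(Y)$ and $H^2(Y,\mathcal{O}_Y)=p_g(Y)$, the exact sequence above becomes an exact sequence of known dimensions.

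Taking the alternating sum of dimensions, which vanishes for an exact sequence, gives
$$-\,q(Y)+R^1(\pi_*\mathcal{O}_Y)-\frac{(d-1)(d-2)(d-3)}{6}+p_g(Y)=0,$$
and rearranging produces the asserted formula. For the quintic case I set $d=5$, so that $\frac{(d-1)(d-2)(d-3)}{6}=4$; when moreover $Y$ is of general type, Umezu's theorem \cite{umezu} forces the irregularity $q(Y)$ to vanish, giving $4-p_g(Y)=R^1(\pi_*\mathcal{O}_Y)$.

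The computation is essentially formal, so the only point demanding genuine care is the claim that $R^1\pi_*\mathcal{O}_Y$ is concentrated at the single non-DuVal point --- equivalently, that the remaining singularities contribute nothing to the sheaf. This is exactly the rationality of DuVal singularities, and is where the hypothesis on the singularities of $X$ enters; everything else is bookkeeping in the two exact sequences and the evaluation of the binomial coefficient $\binom{d-1}{3}$.
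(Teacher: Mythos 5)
Your proof is correct and follows essentially the same route as the paper: the identity is read off from the exact sequence $0 \to H^1(X,\mathcal{O}_X) \to H^1(Y,\mathcal{O}_Y) \to R^1\pi_*\mathcal{O}_Y \to H^2(X,\mathcal{O}_X) \to H^2(Y,\mathcal{O}_Y) \to 0$ together with $q(X)=0$, $p_g(X)=\binom{d-1}{3}$, and Umezu's theorem for the quintic case. The only difference is that you derive the exact sequence from the Leray spectral sequence and compute the cohomology of $\mathcal{O}_X$ from the ideal sheaf sequence and Serre duality, whereas the paper simply cites Yang and asserts these standard facts.
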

\begin{proof}
On a normal hypersurface $X$ of degree $d \geq 4$, we have $H^1(X, \mathcal{O}_X) = q(X)=0$ and 
$$
H^2(X, \mathcal{O}_X)=p_g(X)=(d-1)(d-2)(d-3)/6.
$$ 
From those values and the exact sequence
(see \cite[pg. 433 ]{yang})
$$
0 \to H^1 \left( X, \mathcal{O}_{X} \right) \to H^1 \left( Y, \mathcal{O}_{Y} \right) \to 
R^1 \pi_{*} \mathcal{O}_Y 
\to H^2 \left(X , \mathcal{O}_{X} \right) \to H^2 \left( Y, \mathcal{O}_{Y} \right) \to 0,
$$
we obtain
$
p_g(X) - \dim R^1 \left( \pi_{*} \mathcal{O}_Y \right) = p_g(Y)-q(Y)=p_a(Y)
$.
If $X$ is a quintic surface and  $Y$ is of general type, then the irregularity $q(Y)$ vanishes by \cite{umezu}.
\end{proof}
The following result  illustrates the complexity of the  singularities
in a semi-stable surface. 
\begin{prop}\label{l:generalpg}
There is at least one semi-stable hypersurface $X \subset \mathbb{P}^3$ of degree $d \geq 4$ with an isolated quasi-homogeneous singularity of genus $g(d)$ where
\begin{align*}
g(d)=
\begin{cases}
\frac{d(d-2)(4d-10)}{48} & \text{ if $d$ is even} 
 \\
&
\\
\frac{(d-1)(d-3)(4d-2)}{48} & \text{ if $d$ is odd.} 
\end{cases}
\end{align*}
Note that $g(4)=1$, $g(5)=3$, and $g(6)=7$. 
\end{prop}
\begin{proof}
From the combinatorics of the GIT setting, it is clear that for any degree $d \geq 4$ 
the one-parameter subgroup $\lambda_1 =(1,0,0,-1)$ is 
always  critical  and $M^{\oplus}(\lambda_1)$ is a maximal semi-stable set.   The  generic associated semi-stable surface  is the zero set of  the polynomial 
$F_{\lambda_1}(x_0,x_1,x_2,x_3)$  stabilized by $\lambda_1$.
If $d=2m+1$, then
$$
F_{\lambda_1}(x_0,x_1,x_2,x_3)=
x_3^mx_0^mf_1(x_1,x_2) + x_3^{m-1}x_0^{m-1}f_3(x_1,x_2)
+ \ldots +f_{2m+1}(x_1,x_2),
$$
and a similar equation is associated to the case $d=2m$.  After localizing, we have 
a quasi-homogeneous polynomial with weights 
$w=(2,1,1)$,  and weighted multiplicity $d$, and  whose weighted leading term induces an isolated singularity.
The geometric genus of a quasi-homogeneous isolated singularity hypersurface is determined by its weights.  We apply Lemma \cite[pg 48]{watanabeyoshi} using the expressions
$n_i/d =(a_i-a_3)/w(f)$,  
where $(a_0,a_1,a_2,a_3)=(1,0,0,-1)$ and 
$w(f_p) - d=0$. Therefore, the geometric genus of the singularity at $F_{\lambda_1}$ is given by the number of non-negative integer solutions of the following equations:
\begin{align}\label{dioeq}
d & = i_0+i_1+i_2+i_3 \\
|a_3|(d-4)
& \geq 
(a_0-a_3)i_0+(a_1-a_3)i_1+ (a_2-a_3)i_2.
\notag
\end{align}
This is calculated by a standard method which was shown to us by E. Rosu. From it, we find that the geometric genus is equal to 
$$
\sum_{k=0}^{\left[ \frac{d-4}{2} \right]} \binom{d-2-2k}{2}.
$$
This formula becomes the expression of the statement after some algebraic manipulations.
\end{proof}

\section{Stability and Non-Isolated Singularities}
\label{sec:nonisolatedsing}

%For completing our description of the GIT quotient, we need to understand the stability of quintic surfaces with non-isolated singularities.  
In this section, we give a partial description of the stability for reducible quintic surfaces,  surfaces with a certain curve of singularities of multiplicity three, and quintic surfaces with  a line of singularities of multiplicity two.   These cases will complement the ones discussed in previous sections (see  Proposition \ref{prop:tripleline},  Corollary \ref{highergenus}, and Corollary \ref{2qh}).  

\subsection{Reducible Quintic Surfaces}
A generic quintic surface that decomposes as a union of a quartic surface and a hyperplane is  GIT stable.  The locus, called 
$\overline{M}(4,1)$, in the GIT quotient that parametrizes those surfaces is twenty two dimensional: Nineteen dimensions are associated to the moduli of $K3$ surfaces, and three dimensions are associated to the choice of a hyperplane in $\mathbb{P}^3$.
\begin{prop}\label{41}
Let $X$ be a quintic surface that decomposes as a union of a hyperplane $H$ and a quartic normal surface $Y$ with isolated singularities such that
\begin{enumerate}[label*=\arabic*.]
\item The isolated singularities in our quartic surface does not destabilize the quintic surface.
\item The singular locus of the quartic surface $Y$ is disjoint from the hyperplane, and the quartic plane curve $Y \cap H$ has at worst a triple point whose tangent cone has a double line.
\end{enumerate}
Then $X$ is stable. 
\end{prop}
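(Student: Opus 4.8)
The plan is to argue by contradiction through the numerical criterion. Supposing $X$ were non-stable, Proposition \ref{prop:maximalsemistableops} would furnish a coordinate system with $\Xi_{F_X}\subset M^{\oplus}(\lambda_i)$ for one of the ten critical subgroups, and I would eliminate each case by confronting the geometric dictionary of Table \ref{cr1ps} (made precise in Propositions \ref{prop:GIT} and \ref{prop:unstableGIT}) with the hypotheses. The first step is to record the singular structure of $X=H\cup Y$. Writing $F_X=L\cdot G$ with $H=\{L=0\}$ and $Y=\{G=0\}$, the hyperplane component is unique, since the irreducible normal quartic $Y$ contains no plane. Then $Sing(X)=Sing(Y)\sqcup C$, where $C=Y\cap H$ is the double curve; because hypothesis (2) gives $Sing(Y)\cap H=\emptyset$, the quartic is smooth along $C$, so $mult_p(X)=mult_p(L)+mult_p(G)=2$ for all $p\in C$, while $mult_p(X)\le 3$ away from $H$. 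In particular $X$ has no quadruple point, which rules out $\lambda_7$.

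Next I would treat the isolated-singularity subgroups $\lambda_1,\lambda_3,\lambda_4$, whose destabilizing datum is an isolated triple point of $X$ with non-reduced tangent cone of a prohibited type. Any isolated triple point of $X$ lies in $Sing(Y)$ and, as $L$ is a unit nearby, is analytically the corresponding singularity of $Y$. By hypothesis (1) such a point is stable in the sense of Corollary \ref{cor:isolateddp} or Proposition \ref{trpq}: a double point or a reduced-tangent-cone triple point is not of the form singled out by $\lambda_1,\lambda_3,\lambda_4$, while Proposition \ref{trpq} characterizes the admissible non-reduced triple points as exactly those not destabilized by $\lambda_1,\lambda_3,\lambda_4$. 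The triple-point-on-a-double-line subgroups $\lambda_2,\lambda_6,\lambda_8$ demand a point of multiplicity three on a one-dimensional stratum of $Sing(X)$; but every curve in $Sing(X)$ lies in $C$, where $mult_p(X)=2$, so no such point exists.

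The crux is the distinguished-double-point case $\lambda_5,\lambda_9$, where the destabilizing singularity is a non-isolated double point $p$ whose tangent cone is a double hyperplane $H_\lambda^2$ and whose flag hyperplane meets $X$ in a quintuple line (resp. a quadruple line plus a line). Since $p$ lies on a one-dimensional stratum of $Sing(X)$ it belongs to $C$, where the tangent cone of $X$ equals $H\cup T_pY$; this is a double plane only when $T_pY=H$, which forces $H_\lambda=H$. But $H$ is a component of $X$, so $F_X|_{H_\lambda}=F_X|_H\equiv 0$, contradicting the requirement that $F_X|_{H_\lambda}$ be a nonzero quintic. I expect this identification of the flag hyperplane with the plane component, via the tangent-cone analysis along $C$, to be the main obstacle, since it is where the reducible geometry of $X$ interacts most subtly with the destabilizing datum.

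It remains to treat $\lambda_{10}$, which is precisely the reducible type of $X$ and so cannot be dispatched by multiplicities. By Proposition \ref{prop:unstableGIT}(4), the containment $\Xi_{F_X}\subset M^{\oplus}(\lambda_{10})$ would force the plane quartic $Y\cap H$ to carry a triple point with triple-line tangent cone $L^3$ (and $Y$ to meet $L$ in a quadruple point); uniqueness of the hyperplane component identifies this with a triple point of $C=Y\cap H$ whose tangent cone is a triple line. This contradicts hypothesis (2), which admits at worst a triple point with a double-line tangent cone. Having excluded all ten configurations, $X$ is stable.
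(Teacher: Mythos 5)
Your overall strategy is the same as the paper's: assume $X$ is non-stable, use Proposition \ref{prop:maximalsemistableops} to put $\Xi_{F_X}$ inside some $M^{\oplus}(\lambda_i)$, and kill each case against the hypotheses. Your handling of $\lambda_7$ (no quadruple points), of $\lambda_1,\lambda_3,\lambda_4$ (isolated triple points excluded by hypothesis~(1)), of $\lambda_2,\lambda_6,\lambda_8$ (a triple point on a singular curve is impossible since every singular curve of $X$ lies in $C=Y\cap H$, where the multiplicity is $2$), and of $\lambda_{10}$ is correct, and is in fact more explicit than the paper, which passes immediately to $k=5,9,10$ by observing that hypothesis~(1) prevents the isolated singularities of $Y$ from destabilizing.

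The gap is in your $\lambda_5,\lambda_9$ step, and it is genuine. You correctly deduce $H_{\lambda}=H$, but the closing contradiction --- that $F_X|_{H_\lambda}\equiv 0$ violates ``the requirement that $F_X|_{H_\lambda}$ be a nonzero quintic'' --- does not exist: the quintuple-line statement in Proposition \ref{prop:GIT}(4) describes the \emph{generic, irreducible} surface with that monomial support; it is not a consequence of the containment $\Xi_{F_X}\subset M^{\oplus}(\lambda_5)$. The containment only requires every monomial of $F_X$ to have non-negative $\lambda_5$-weight, and this is fully compatible with $x_0\mid F_X$, i.e.\ with $F_X|_{H_\lambda}\equiv 0$, because $x_1^5$ is the unique monomial of $M^{\oplus}(\lambda_5)$ not divisible by $x_0$ and nothing forces its coefficient to be nonzero. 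This is exactly the situation your own tangent-cone argument produces, and it is non-vacuous: take
\begin{align*}
F_X=x_0\big(x_3^3x_0&+x_3^2x_0f_1(x_0,x_1,x_2)+x_3x_0f_2(x_0,x_1,x_2)\\
&+x_3x_1^2h_1(x_1,x_2)+x_0f_3(x_0,x_1,x_2)+x_1g_3(x_1,x_2)\big)
\end{align*}
with generic coefficients. Then $\Xi_{F_X}\subset M^{\oplus}(\lambda_5)$, so $\mu(\lambda_5,X)\geq 0$ and $X$ is not stable (under $\lambda_5$ it degenerates to a coordinate change of the surface in the third case of Proposition \ref{41boundary}, so it is strictly semistable). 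Yet the quartic $Y=V(F_X/x_0)$ is smooth (Bertini away from the base line $x_0=x_1=0$ of the system, plus a direct check of the partials along that line), so hypothesis~(1) is vacuous and $Sing(Y)\cap H=\emptyset$; and $Y\cap H$ is the union of the line $(x_1=0)$ and an irreducible nodal cubic whose unique singularity is a triple point with tangent cone $x_1^2h_1(x_1,x_2)$ --- a double line plus a line, which is precisely the worst singularity hypothesis~(2) tolerates as written. So no contradiction can be extracted the way you propose; to exclude $\lambda_5$ (and $\lambda_9$, where the reducible case instead forces $Y\cap H$ to contain a double line and hence $Sing(Y)$ to meet $H$ or $Y$ to have non-isolated singularities) one must, as the paper implicitly does, compute the curve $Y\cap H$ from the monomial data in the reducible case $H_\lambda=H$ and confront it with hypothesis~(2) read as \emph{forbidding} triple points of $Y\cap H$ whose tangent cone contains a double line. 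Your flag-restriction argument cannot see this, and under the literal reading of hypothesis~(2) the $\lambda_5$ configuration above is an outright obstruction to the claimed stability.
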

\begin{proof}
If the quintic surface  is non-stable, then there is a coordinate system and a normalized 1-PS $\lambda$ such that 
$\mu(X, \lambda) \geq 0$. By condition (1) in the statement,  $p_{\lambda}$ must be supported in the intersection of the hyperplane with the quartic surface; and $p_{\lambda}$ must have multiplicity two because $Sing(Y)$ is disjoint from $H$. 
By our results in Section \ref{sec:GIT},  $\Xi_{F_X}$ is contained in $M^{\oplus}(\lambda_k)$ for $k=5,9,10$ (see Table \ref{cr1ps}).
However, this is not possible according to our hypothesis about $Y \cap H$, 
the fourth case of Proposition \ref{prop:GIT}, and the third and the fourth cases of Proposition \ref{prop:unstableGIT}. 
\end{proof}

Next, we describe the intersection between $\overline{M}(4,1)$ and our GIT boundary.
We can say, somewhat informally, that these are the worst unions of a quartic surface and a hyperplane parametrized by our GIT quotient.
\begin{prop}\label{41boundary}
Let $X$ be a quintic surface parametrized by a point in the intersection between the locus $\overline{M}(4,1) $ and $\Lambda_i$.  
Then one of the following conditions holds:
\begin{enumerate}[label*=\arabic*.]
\item The surface $X$ is parametrized by $\Lambda_1$ and satisfies the following:
\begin{itemize}%[label*=\arabic*.]
\item[*] 
The quartic surface has two $\tilde{E}_7$ singularities.
\item[*] The intersection of the hyperplane and the quartic surface is a union of two conics of the form
$$
\left( (xy-a_1z^2)(xy-a_2z^2) =0 \right).
$$
\item[*] 
The hyperplane does not intersect the singularities along their tangent cones. 
\end{itemize}
\item The surface $X$ is parametrized by $\Lambda_2$ and satisfies the following:
\begin{enumerate}[label*=\arabic*.]
\item[*] 
The singular locus of the quartic surface decomposes as a union of two coplanar double lines $L_1$ and $L_2$ intersecting at a non-isolated triple point with an associated equation of the form 
$
x^2y+x^3z+y^2z^2
$.
\item[*] 
The intersection of the hyperplane and the quartic surface decomposes as a union of a cuspidal plane curve and a line. That line is contained in the quartic surface. The singularity of the cuspidal curve is away from the triple point.
\end{enumerate}
\item
The surface $X$ is parametrized by $\Lambda_4$ and satisfies the following:
\begin{enumerate}[label*=\arabic*.]
\item[*] The singular locus of the quartic surface has a double line $L$ and a distinguished triple point given by the equation $x^3-xyz^2+zy^3$ which is away from the hyperplane.
\item[*] The intersection of the hyperplane and the quartic surface is a union of two lines and a conic tangent to one of them. 
\end{enumerate}
\end{enumerate}
We represent those geometric characteristics in Figure \ref{41fig}.
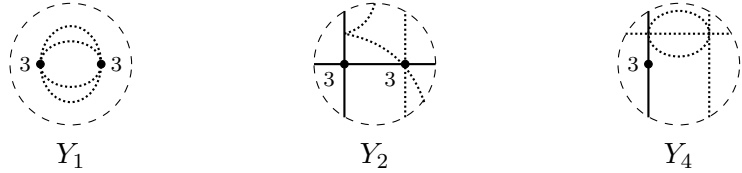
\begin{figure}[h!]
\centering
\begin{tikzpicture}
\draw[dashed] (0,0) circle [radius=0.8cm];
\filldraw (-0.4,0) circle [radius=0.05cm];
\draw (-0.6,0) node {$\scriptstyle{3}$};
\filldraw (0.4,0) circle [radius=0.05cm];
\draw (0.6,0) node {$\scriptstyle{3}$};
\draw (0,-1.2) node {$Y_1$};
\draw[thick, densely dotted] (0,0) circle [x radius=0.4cm, y radius=0.3cm];
\draw[thick, densely dotted] (0,0) circle [x radius=0.4cm, y radius=0.5cm];
\draw[dashed] (4,0) circle [radius=0.8cm];
\draw[thick] (3.2,0) - - (4.8,0);
\draw[thick] (3.6,-0.7) - - (3.6,0.7);
\draw[thick, densely dotted] (4.4,-0.7) - - (4.4,0.7);
\filldraw (3.6,0) circle [radius=0.05cm];
\draw (3.4,-0.2) node {$\scriptstyle{3}$};
\filldraw (4.4,0) circle [radius=0.05cm];
\draw (4.2,-0.2) node {$\scriptstyle{3}$};
\draw[thick, densely dotted] (4.65,-0.5) to[bend right] (3.6,0.4);
\draw[thick, densely dotted] (4,0.8) to[bend left] (3.6,0.4);
\draw (4,-1.2) node {$Y_2$};
\draw[dashed] (8,0) circle [radius=0.8cm];
\draw[thick, densely dotted] (7.3,0.4) - - (8.7,0.4);
\draw[thick, densely dotted] (8.4,-0.7) - - (8.4,0.7);
\draw[thick, densely dotted] (8,0.4) circle[x radius=0.4cm, y radius=0.3cm];
\draw[thick] (7.6,-0.7) - - (7.6,0.7);
\filldraw (7.6,0) circle [radius=0.05cm];
\draw (7.4,0) node {$\scriptstyle{3}$};
\draw (8,-1.2) node {$Y_4$};
\end{tikzpicture}
\caption{
$Y_i$ are our  quartic surfaces, the dotted lines are the intersection $Y_i \cap H$,   bold lines are the singular locus of $Y_i$, and the numbers are the multiplicity 
of the singularities at those points.}\label{41fig}
\end{figure}
\end{prop}
\begin{proof}
Let $X$ be such a quintic surface. Then there is a one-parameter subgroup $\lambda$ such that $X$ is invariant under the action of it.  By construction $X= Y \cup H$ and it is easily seen that the hyperplane is also invariant under the action of $\lambda$. In particular, this implies that in our coordinate system the equation associated to $H$ must be $(x_i=0)$. 
From our results in Section \ref{sec:minimalorbit}, and up to a change of coordinates, we have the equations of these surfaces.  So, the statement reduces to describing their geometric characteristics which follow from their equations:
\begin{align*}
\tilde{F}_{\lambda_2} &= 
x_0(x_3^2x_1^2+x_3x_0x_2^2+x_1x_2^3)
& & 
\tilde{F}_{\lambda_4} =
x_3(x_0^3x_3+x_2x_1^3+x_0x_1x_2^2)
\\
\tilde{F}_{\lambda_1} &=
x_1\left( 
x_3^2x_0^2+
x_0x_3f_2(x_1,x_2)+f_4(x_1,x_2) \right)
\end{align*}
\end{proof}
Next, we show that a quintic surface with  a non-linear curve of singularities of multiplicity  three decomposes as a union of a quartic surface and a hyperplane, and it is generically stable. 
\begin{prop} 
\label{mltp3} 
Let $X$ be a quintic surface with a curve of singularities $C$ such that $C$ does not contain a line and $mult_p(X) = 3$ for every $p \in C$. Then $X$ decomposes as a union of a hyperplane and a quartic surface, and there is a coordinate system such that its 
associated equation can be written as 
$$
x_i\left( f_2(x_j,x_k,x_3)^2 
+
x_i^2g_2(x_0,x_1,x_2,x_3) + x_if_2(x_j,x_k,x_3) f_1(x_0,x_1,x_2,x_3)
\right).
$$
Moreover, this surface is generically stable (compare this with Proposition \ref{prop:tripleline}).
\end{prop}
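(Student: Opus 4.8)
The plan is to extract the decomposition and normal form directly from the multiplicity hypothesis, and then to deduce stability by excluding every destabilizing configuration from the GIT classification already in place.

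\emph{Decomposition.} First I would use bisecant lines. If a line $\ell$ meets $C$ in two points, then since $\mathrm{mult}_p(X)=3$ at each of them, B\'ezout gives $\ell\cdot X\ge 6>5$, so $\ell\subset X$. Thus every bisecant of $C$ lies in $X$; but the union of bisecants of a curve in $\mathbb{P}^3$ (including lines joining distinct components) is dense unless the curve is degenerate, so if $C$ were nondegenerate we would obtain $X=\mathbb{P}^3$. Hence $C$ is planar, contained in a hyperplane $P$. Because $\deg C\ge 2$ (as $C$ contains no line), every line of $P$ meets $C$ in at least two points and is therefore a bisecant, so $P\subset X$ and $X=P\cup Y$ with $Y$ a quartic. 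Along $C$ one has $\mathrm{mult}_p(P)=1$, so the hypothesis $\mathrm{mult}_p(X)=3$ forces $\mathrm{mult}_p(Y)=2$; in particular $C\subset Y$ and $Y$ is singular along $C$.

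\emph{Normal form.} Restricting $F_Y$ to $P$ yields a plane quartic whose first derivatives vanish along $C$, hence a plane quartic singular along $C$; a degree-$d$ curve occurring with multiplicity two inside a quartic forces $2d\le 4$, so $d=2$, the curve $C$ is a smooth conic, and $Y\cap P=2C$. Choosing coordinates so that $P=(x_i=0)$ and $C=(x_i=f_2=0)$ with $f_2=f_2(x_j,x_k,x_3)$, the relation $F_Y|_P=f_2^2$ gives $F_Y=f_2^2+x_i h_3$ for some cubic $h_3$; imposing that $Y$ have multiplicity two along $C$ amounts to $h_3|_C=0$, that is $h_3\in(x_i,f_2)$, so $h_3=x_ig_2+f_2l$ and $F_X=x_iF_Y$ takes the stated form.

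\emph{Generic stability.} For generic $f_2,g_2,l$ the singular locus of $X$ is exactly the smooth conic $C$: there is no line inside $\mathrm{Sing}(X)$, no point of multiplicity $\ge 4$, and no isolated triple point with non-reduced tangent cone. By Proposition \ref{prop:maximalsemistableops}, Table \ref{cr1ps}, and Propositions \ref{prop:GIT} and \ref{prop:unstableGIT}, a non-stable quintic must carry one of these features (a double line of singularities for $\lambda_2,\lambda_5,\lambda_6,\lambda_8,\lambda_9$, a point of multiplicity at least four for $\lambda_7$, an isolated triple point with non-reduced tangent cone for $\lambda_1,\lambda_3,\lambda_4$, or a triple line for $\lambda_{10}$). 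Since a generic member has none of them, no critical $1$-PS destabilizes it, and as the stable locus is open this already shows that the generic such $X$ is stable; the contrast with Corollary \ref{tripleline} is precisely that the conic, unlike a triple line, produces no line of singularities to be exploited by $\lambda_2,\lambda_6,\lambda_8$.

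\emph{Main obstacle.} The hard part will be the genericity statement in the previous paragraph: rigorously verifying that for generic coefficients $\mathrm{Sing}(X)=C$ with no extraneous line, quadruple point, or bad isolated triple point, and hence that no change of coordinates places $\Xi_{F_X}$ inside any $M^{\oplus}(\lambda_k)$. In practice I would confirm this by checking the numerical criterion on a single explicit member (in the spirit of the Macaulay2 computations elsewhere in the paper) and then propagate stability to the generic surface by openness. A secondary point of care is the planarity step when $C$ is reducible, where one must observe that the join of the components of $C$ is again three-dimensional unless $C$ is degenerate.
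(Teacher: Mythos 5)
Your first two steps are essentially the paper's own argument: the paper also uses the B\'ezout bound on secants to conclude $\mathrm{Sec}(C)\subset X$, hence $C$ planar, $X=H\cup Y$, and $C$ a smooth conic by degree considerations; your derivation of the normal form via $h_3\in(x_i,f_2)$ is actually more careful than the paper's, which merely asserts $f_4=f_2^2$ and that $x_i$ or $f_2$ divides $g_3$. The divergence is in the stability step, where the paper bounds $\mu(\lambda,X)$ directly (using that $\Xi_{f_2}$ must contain $x_3x_j$ and $x_l^2$, etc.) and checks nonpositivity against the critical 1-PS of Proposition \ref{prop:maximalsemistableops}, while you attempt a purely geometric exclusion. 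That exclusion argument, as written, has genuine gaps.

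First, $\lambda_{10}$ is misattributed: by Table \ref{cr1ps} and Proposition \ref{prop:unstableGIT}, the configuration $M^{\oplus}(\lambda_{10})$ is not associated with a triple line (that is the $\lambda_8$ phenomenon of Corollary \ref{tripleline}); it is associated with precisely the surfaces at hand, unions of a quartic and a hyperplane. So the one critical 1-PS that your surface matches on its face is the one you dismiss without argument. To exclude it you need the finer, monomial-forced fact that any $F_X$ with $\Xi_{F_X}\subset M^{\oplus}(\lambda_{10})$ factors as $x_0f_4$ with the plane quartic $f_4|_{x_0=0}$ having a point of multiplicity at least three, whereas for your surface $Y\cap H=2C$ is a double conic whose points all have multiplicity two. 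Second, the qualifier \emph{isolated} in your $\lambda_1,\lambda_3,\lambda_4$ feature is a property of the \emph{generic} destabilized surface, not a necessary condition for non-stability; the robust necessary condition is only ``a point of multiplicity $\geq 3$ with non-reduced tangent cone (or multiplicity $\geq 4$),'' with no isolatedness. And your generic surface \emph{does} have such points: writing $F_Y=f_2^2+x_if_2l+x_i^2g_2$, at the four points of $C$ where $(l^2-4g_2)|_C$ vanishes the tangent cone of $Y$ degenerates to a double plane, so the tangent cone of $X$ there is $H\cup\Pi^2$, non-reduced. Hence $\lambda_1,\lambda_3,\lambda_4$ are not excluded by your list either; ruling them out requires further robust consequences of $M^{\oplus}(\lambda_1)$ (e.g.\ that $X\cap H_{\lambda}$ must be a cone of five concurrent lines, which fails here). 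More generally, quoting Propositions \ref{prop:GIT} and \ref{prop:unstableGIT} as necessary conditions for an arbitrary non-stable surface is exactly the converse direction that the paper's remark after Proposition \ref{prop:GIT} warns requires individual analysis (and Proposition \ref{prop:GIT} is stated only for isolated-singularity or irreducible surfaces, while your $X$ is reducible with non-isolated singularities). Your fallback plan --- verify the numerical criterion on one explicit member and spread by openness of the stable locus --- is sound and is much closer to what the paper actually does; the geometric exclusion, as written, does not prove the statement.
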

\begin{proof}
Let $C$ be such a curve. Consider two generic distinct points $p$ and $q$ on it, and let $L_{p,q}$ be the line that join them. Since $p$ and $q$ are triple points,  $L_{p,q}$ intersects $X$ with multiplicity greater than or equal to six. However, since $X$ is a quintic surface, this implies that the surface contains the line $L_{p,q}$ for every $p$ and $q$ on $C$. Then, $X$ contains the secant variety $Sec(C)$ of $C$. For a curve $C$ in $\mathbb{P}^3$, the secant variety of $C$ is either the whole $\mathbb{P}^3$ or a hyperplane, with the latter option only happening if $C$ is a plane curve itself
(see \cite[pg 144]{harrisAG}). Then $C$ is a plane curve, and $X$ decomposes as a hyperplane $H$ and a quartic surface $Y$.
Moreover, from the hypotheses and by degree considerations, $C$ is a smooth conic.
Let our coordinate system be such that the critical one-parameter subgroups are the ones in Proposition \ref{prop:maximalsemistableops} and the hyperplane is given by some  $(x_i=0)$.  Then, the equation associated to the quintic surface can be written as
$$
x_i \left( f_4(x_j,x_k,x_l) + x_ig_3(x_0,x_1,x_2,x_3) \right).
$$
By our hypotheses, $m_p(X) = 3$ for every point $p \in C \subset Y \cap H$ and $C$ does not contain a line. Then, it holds that $f_4(x_j,x_k,x_l) = (f_2(x_j,x_k,x_l))^2$ and either $x_i$ or $f_2(x_j,x_k,x_l)$ divides $g_3(x_0,x_1,x_2,x_3)$. 
In our coordinate system, the most general equation satisfying these properties is the one of the statement.

Given a normalized one-parameter subgroup $\lambda=(a_0,a_1,a_2,a_3)$, we have
$$
\mu(\lambda, X)
\leq \min \{
a_i+2 \mu(\lambda, f_2), 3a_i+ \mu(\lambda, g_2),2a_i+\mu(\lambda, f_2)
+\mu(\lambda, l)
\}.
$$
In our coordinate system, the curve cannot be supported at $(x_3=0)$ because a triple point is supported at both $p_{3}$ and $H$. By the construction and smoothness of $C$, we have
$$ 
f_2(x_j,x_l,x_3)=
x_3l(x_j,x_l)+p_2(x_j,x_l)
$$
with the set of monomials 
$\Xi_{f_2}$ containing at least $\{ x_3x_j, x_l^2 \}$ with $j \neq l$ and $j,l \neq i$. 
Additionally,  it holds generically that
$\mu(\lambda, g_2) \leq 2a_1$. Therefore, 
\begin{align*}
\resizebox{1.0\hsize}{!}{
$
\mu(\lambda, X)
\leq 
\min \{
a_i+2(a_3+a_j), 
a_i+4a_l, 3a_i+ 2a_1,
2a_i+(a_3+a_j)+a_0,
2a_i+2a_l+a_1
\}
$ } 
\end{align*} 
A direct calculation  shows that   $\mu(\lambda_k, X) \leq 0$.  Thus, $X$ is semi-stable. 
\end{proof}

%%%%%%%%%%%%%%%%%%
Next we consider a quintic surface that decomposes as a union of a cubic and a quadric surface. On the moduli space, the locus that parametrizes these surfaces is thirteen dimensional: 
Nine dimensions arise from the genus four curve defined by the intersection of the cubic and the quadric surface. The other four dimensions arise from the fact that we can add a multiple of the quadratic equation to the cubic surface equation without changing the genus 4 curve. 
\begin{prop}\label{con3q}
Let $X$ be a union of a smooth quadric surface $Q$ and a cubic surface $Y$ with a triple point at $p \notin Q$. This triple point destabilizes the quintic surface if and only if 
the tangent cone of  $Y$ at $p$ is either a union of a conic and tangent line or a degeneration of it.
\end{prop}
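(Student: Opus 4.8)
The plan is to localize the whole question at the triple point and convert it into a \emph{weighted} stability problem for the plane cubic cut out by the tangent cone. First I fix coordinates so that the triple point of $Y$ sits at $p_3$. Since $Y$ has multiplicity three there and $\deg Y=3$, the cubic equals its own tangent cone, $F_Y=c_3(x_0,x_1,x_2)$, the cone with vertex $p_3$ over the plane cubic $C:=V(c_3)\subset V(x_3)\cong\mathbb{P}^2$. Because the triple point lies off $Q$ we have $q_2(p_3)\neq 0$, so $x_3^2\in\Xi_{F_Q}$ and $F_X=F_Q\cdot c_3$. The numerical function is additive on this product, $\mu(\lambda,X)=\mu(\lambda,Q)+\mu(\lambda,Y)$, no cancellation occurring among lowest-weight monomials since $x_3^2$ is the unique minimal-weight factor coming from $Q$. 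The destabilizing one-parameter subgroup can be centered at $p_3$: the quadric is smooth and the double curve $Q\cap Y$ avoids $p_3$, so the only locus carrying a bad flag is the triple point, and by Kempf's theorem (used already in the proofs of Proposition \ref{prop:GIT} and Lemma \ref{djt}) the canonical worst $\lambda$ has $p_\lambda=p_3$. Thus I may take $\lambda=\mathrm{diag}(a_0,a_1,a_2,a_3)$ normalized with $a_3=\min_i a_i$; then $x_3^2$ realizes the minimal degree-two weight, giving $\mu(\lambda,Q)=2a_3$.

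Next I rewrite everything in terms of $C$. Setting $\beta_i:=a_i-a_3\geq 0$ for $i=0,1,2$, every monomial of $c_3$ satisfies $\lambda\cdot m=3a_3+\beta\cdot m$, so $\mu(\lambda,Y)=3a_3+\mu(\beta,C)$, where $\mu(\beta,C):=\min_{m\in\Xi_{c_3}}\beta\cdot m$ is the numerical function of $C$ for the (unnormalized) weight $\beta$ on $\mathbb{P}^2$. Using $\sum_i a_i=0$, which forces $a_3=-\tfrac14|\beta|_1$ with $|\beta|_1=\beta_0+\beta_1+\beta_2$, I obtain the clean identity
\begin{align*}
\mu(\lambda,X)=5a_3+\mu(\beta,C)=\mu(\beta,C)-\tfrac{5}{4}\,|\beta|_1 .
\end{align*}
Normalizing $|\beta|_1=3$ (so the balanced weight $(1,1,1)$ corresponds to the cone itself), the surface $X$ is non-stable if and only if, after a projective change of the plane coordinates, there is a weight $\beta\geq 0$ with $\sum_i\beta_i=3$ and $\mu(\beta,C)\geq\tfrac{15}{4}$. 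In other words, destabilizing $X$ amounts to a \emph{strengthened} instability of the tangent cone $C$, with the usual cubic threshold $3$ replaced by $15/4$.

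Finally I read off the dichotomy from the singularity type of $C$, since $C$ is exactly the projectivized tangent cone at $p$. Placing the worst singular point of $C$ at $p_2=[0:0:1]$ and reducing its local equation to normal form, the projective invariant $M(C):=\sup_\beta\mu(\beta,C)$ is computed by a small linear program in each case: a node forces the monomial $x_0x_1x_2$ and gives $M(C)=3$; a cusp $c_3=x_0^2x_2+x_1^3$ has optimum $\beta=(\tfrac95,\tfrac65,0)$ and $M(C)=\tfrac{18}{5}$; whereas a tacnode $c_3=x_0^2x_1+x_1^2x_2$ (conic plus tangent line) already yields $\beta=(1,2,0)$ with $\mu(\beta,C)=4$, and an ordinary triple point (three concurrent lines) or a non-reduced tangent cone give even larger values. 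Since $3<\tfrac{18}{5}<\tfrac{15}{4}<4$, the threshold $15/4$ falls strictly between the cuspidal and the tacnodal cases, so $X$ is stable exactly when $C$ has at worst a cusp and non-stable exactly when $C$ is worse, which is the assertion. The forward (destabilizing) direction is then immediate from the explicit $\beta$'s above. The main obstacle is the converse, namely the \emph{uniform} bound $M(C)<15/4$ over all coordinate systems when $C$ is at worst cuspidal; I expect to settle it either by the finite numerical-criterion analysis of plane cubics sketched here — it suffices to verify the inequality on the finitely many critical weights of $\mathbb{P}(\mathrm{Sym}^3)$ — or, alternatively, by checking that the cone over an at-worst-cuspidal cubic has log canonical threshold $\geq 5/6>4/5$ and invoking Lemma \ref{prop:lctstable}.
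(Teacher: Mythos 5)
Your reduction is sound and, up to repackaging, is exactly the paper's first step: the paper writes the surface as $f_3(x_0,x_1,x_2)g_2(x_0,x_1,x_2,x_3)$ and observes $\mu(\lambda,X)=2a_3+\mu(\lambda,f_3)$, which is your identity $\mu(\lambda,X)=\mu(\beta,C)-\tfrac{5}{4}|\beta|_1$; your threshold $\tfrac{15}{4}$ and the explicit destabilizing weights for the tacnode (and worse) correctly settle the forward direction. The genuine gap is the converse, which is where essentially all the work lies, and you have explicitly left it open. Your case computations ($M(C)=3$ for nodes, $M(C)=\tfrac{18}{5}$ for cusps) are carried out only on normal forms, so they are \emph{lower} bounds on $\sup_{\beta}\mu(\beta,C)$; the converse needs the \emph{upper} bound over all coordinate systems and flags. (Indeed, even your auxiliary claim that ``a node forces the monomial $x_0x_1x_2$'' is false coordinate-independently: the triangle $(x_0^2-x_1^2)x_2=0$ has non-concurrent lines and no $x_0x_1x_2$ term; what is true is the classical semistability of at-worst-nodal plane cubics.) The paper closes precisely this gap: since non-stability is equivalent to $\Xi_{F_X}\subseteq M^{\oplus}(\lambda_i)$ for one of the ten critical 1-PS of Proposition \ref{prop:maximalsemistableops}, it suffices to check those finitely many $\lambda_i$ against each projective type of at-worst-cuspidal cubic (each type being a single $PGL_3$-orbit); the paper does this for the triangle (getting $\mu(\lambda_k,X)\leq -1$) and for the cusp, where the partial order on monomials identifies the worst configuration $x_i^2x_j+x_0^3+p_3(x_0,x_i)$ and gives $\mu(\lambda_k,X)\leq -2$, and then handles the node and conic-plus-transversal-line cases by semicontinuity (they deform to the cusp and the triangle, respectively). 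So your ``first route'' is viable, but note you do not need to re-derive a critical list for the shifted cubic problem — the paper's Table \ref{cr1ps} list already serves.

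Your fallback route via log canonical thresholds does not work as stated. Lemma \ref{prop:lctstable} is a global statement: it requires \emph{every} singularity of the quintic to have lct above $4/5$. Here $X=Q\cup Y$ is non-normal along the degree-six curve $Q\cap Y$, and the proposition places no genericity assumption on the position of $Q$ relative to the cone: for special positions this curve acquires singular points (e.g.\ tacnodal contact) whose lct drops to $3/4$ or below, so $X$ can be globally non-stable even when its tangent cone at the triple point is cuspidal — yet the proposition still asserts that the \emph{triple point} does not destabilize, in the localized sense of the remark following Proposition \ref{con3q} (a destabilizing 1-PS with $p_\lambda$ at the singularity). To run the lct argument you would need a localized version of the Hacking--Kim lemma (a destabilizing 1-PS centered at $p$ forces $\mathrm{lct}_p\leq 4/5$), which is implicit in its proof but is not what Lemma \ref{prop:lctstable} says, so it cannot simply be invoked. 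In short: correct setup, correct forward direction, but the converse — the heart of the proposition — is not proved, and of the two completion strategies you propose, only the finite numerical check (which is the paper's own argument) goes through without further work.
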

\begin{proof} 
Suppose $X$ is not stable, and our coordinate system is such that the critical 1-PS are the ones in Proposition \ref{prop:maximalsemistableops} and the triple point is supported at $p_{3}$. The cubic surface is a cone over a plane cubic curve $C$, 
and the  equation  of the quintic surface is
$
f_3(x_0,x_1,x_2)g_2(x_0,x_1,x_2,x_3).
$ 
By our hypothesis, the quadratic surface is away from the triple point. Therefore, the monomial $x_3^2$ is always present in $\Xi_{g_2}$, which implies
$\mu(\lambda,X) = 2a_3 +\mu(\lambda, f_3)$. The following analysis is divided by the singularities of the cubic curve.
\begin{enumerate}
\item If $C$ has a triple point, then  $X$ is unstable 
because it has either a triple line or a double plane (see  Proposition \ref{prop:tripleline} or 
Proposition \ref{2qh}).
\item If $C$ is a union of a conic with a tangent line, 
then the equation of the quintic surface can be written as
$ 
F_X=x_0(x_2x_0-x_1^2)f_2(x_0,x_1,x_2,x_3),
$ 
which is destabilized by $\lambda_9$.  
\item If $C$ is a union of three non-concurrent lines, then an  equation of the quintic surface is
$$
f_1(x_0,x_1,x_2)g_1(x_0,x_1,x_2)h_1(x_0,x_1,x_2)f_2(x_0,x_1,x_2,x_3),
$$
and the monomial $ x_0x_1x_2x_3^2$ must have coefficient different  to zero because the lines are not concurrent.  The presence of this monomial implies that 
$
\mu(\lambda_k, X) < 0
$
for all  $\lambda_k$.  
\item  If $C$ is a union of a conic and a transversal line, then it deforms to three non-concurrent lines and the stability of $X$ follows by the previous case.
\item 
By considering the partial order among monomials (see 
Section \ref{sec:GIT}), 
if $C$ has a cuspidal singularity, then in our coordinate system any
surface $X$, as in the statement, satisfies
$
\mu(\lambda, X) \leq \mu( \lambda, X_0)
$
where 
$
F_{X_0}= \left(x_i^2x_j+x_0^3+p_3(x_0,x_i) \right)g_2(x_0,x_1,x_2,x_3)
$.
The statement follows from the inequality
$\mu(\lambda_k, X_0) \leq \min\{ 2a_i+a_j+2a_3, 3a_0+2a_3  | i,j \neq 0 \} < 0$.
\item 
Finally, if $C$ is a node, then $C$ deforms to a curve with cuspidal singularity and 
the statement follows by the previous case. 
\end{enumerate}
\end{proof}

\subsection{Quintic Surfaces with a Curve of Singularities of Multiplicity Two}
 
Given a quintic surface with a curve of multiplicities two, we use a sequence of blow ups for constructing a triple cover of $\mathbb{P}^2$ branched over a curve of degree 12. The purpose is to  describe their general form and illustrate the diversity of singular surfaces parametrized by our quotient.

\begin{lemma}\label{dpcov}
Let $X$ be a quintic surface defined by the equation
\begin{align}\label{qdp}
F_{X}:=x_3^3x_0^2 +x_3^2x_0g_2(x_0,x_1,x_2)+x_3f_4(x_0,x_1,x_2)+f_5(x_0,x_1,x_2),
\end{align}
and consider the surface in 
$\mathbb{P}(2,1,1,1)$ defined by the equation
\begin{align*}
G_{F_X} = \psi^3 + \left( f_4-\frac{g_2^2}{3}\right) \psi +
\left(
x_0f_5+\frac{2}{27}g_2^3-\frac{g_2f_4}{3}
\right)
:= \psi^3 + h_4(x_0,x_1,x_2) \psi + h_6(x_0,x_1,x_3).
\end{align*}
The following statements are equivalent:
\begin{itemize}
\item[(i)] 
The set $\Xi_{F_X}$ is contained in either 
$M^{\oplus}(\lambda_5)$ or $M^{\oplus}(\lambda_9)$.
\item[(ii)] 
The polynomials $h_4(x_0,x_1,x_2)$ and $h_6(x_0,x_1,x_2)$  are obtained from a linear combination of the monomials in the sets
\begin{align}\label{wG}
\Xi_{h_4} 
&=
\left\{ 
x_0^{j_0}x_1^{j_1}x_2^{j_2} \; | \; w_0j_0+w_1j_1+w_2j_2 \geq c_1(k) \; ; \; j_0+j_1+j_2=4
\right\} 
\\
\Xi_{h_6}
&=
\left\{ 
x_0^{j_0}x_1^{j_1}x_2^{j_2} \; | \; w_0j_0+w_1j_2+w_2j_2 \geq c_2(k)
\; ; \;
j_0+j_1+j_2=6
\right\} \notag
\end{align} 
where $w_{5}=(5,2,1)$, $c_1(5)=10$,  $c_2(5)=15$ for the case  $\lambda_5$, 
and  $w_{9}=(11,5,0)$, $c_1(9)=20$, $c_2(9)=30$ for the case $\lambda_9$.
\end{itemize}
\end{lemma}
\begin{proof}
We recall the representation of quintic surfaces with a double point as a finite cover of the plane (see \cite[pg 471]{yang}). 
Let $\tilde{X} \to X$ be the monomial transformation of $X$ with center at $p=[0:0:0:1]$. There is a morphism $\tilde{X} \to \mathbb{P}^2$ induced by the projection from the point $p \in X$ that is generically finite of degree three. The surface $\tilde{X}$ is given by the equation 
$$
t^3x_0^2 +t^2sx_0g_2(x_0,x_1,x_2)+ts^2f_4(x_0,x_1,x_2)+s^3f_5(x_0,x_1,x_2)
$$
with $[ t:s ] \in \mathbb{P}^1$. From the equation, we see $\tilde{X}$ is singular along the line $(s=x_0=0)$. Blowing up $\tilde{X}$ along this line in one of the charts, the total transform $X'$ is given by
\begin{align*}
x_0^2 \left( t^3+t^2sg_2(x_0,x_1,x_2)+ts^2f_4(x_0,x_1,x_2)+x_0s^3f_5(x_0,x_1,x_2) \right).
\end{align*}
In this chart,  we  take $\psi =t/s$ and substitute 
$\psi$ by $\psi - g_2(x_0,x_1,x_2)$ to obtain the Equation $G_{F_X}$.

\textbf{Claim: $(i)$ implies $(ii)$.}  We suppose that $\Xi_{F_X} \subset M^{\oplus}(\lambda_k)$ for $k \in \{ 5,9\}$.  Given a polynomial $h_d$ of degree $d$, we denote its set of non-zero monomials as $\Xi_{h_d}$.  For $\lambda_k$ with $k \in \{ 5,9 \}$  and the monomial  $x_0^{i_0}x_1^{i_1}x_2^{i_2}x_3^{i_3}$ (which we denote as $[i_0,i_1,i_2,i_3]$) with $i_0+i_1+i_2+i_3=5$, it holds that
\begin{align}\label{lw}
\lambda_k . [i_0,i_1,i_2,i_3] = w_{k}(i_0,i_1,i_2)+5a_3.
\end{align}
where $w_{k}(i_0,i_1,i_2)$ is  equal to the weighted degree of the monomial $[i_0,i_1,i_2]$.  
By Equation \eqref{lw} and simple arithmetic, we find that 
the following statements are equivalent
\begin{itemize}
\item[(*)] The set $M^{\oplus}(\lambda_k)$ contains $\Xi_{x_3f_4} $, $\Xi_{x_3^2x_0g_2}$ and 
$\Xi_{f_5}$
\item[(*)]
 The weighted of degree of  $h_4$ and $h_6$ satisfy
$w_{k}(h_4) \geq c_1(k)$ and
$w_{k}(h_6) \geq c_2(k)$
\end{itemize}
from which our claim follows.

\textbf{Claim: $(ii)$ implies $(i)$.} We suppose the monomials
$[i_0,i_1,i_2]$ in $\Xi_{h_4}$ and $\Xi_{h_6}$ satisfy condition $(ii)$, and there is a quintic surface  such that the equation of $G_{F_X}$ is induced by $F_X$.
Let $m=[i_0,i_1,i_2]$ be a monomial in $\Xi_{h_4}$, by construction either
$m \in \Xi_{f_4}$ or there are two monomials $m_1$, $m_2$ in $\Xi_{g_2}$ such that 
$m=m_1m_2$. In the first case, by Equation \eqref{lw} we obtain that 
$\lambda_k \cdot [i_0,i_1,i_2,1] \geq 0$.  The second case follows because  if 
 $w_k(m_1m_2) \geq c_1(k)$, then $\lambda_k \cdot x_3^2x_0m_i \geq 0$. 
The same argument applies for monomials in $h_6$. Then, conditions \ref{wG} imply $\lambda_k \cdot  m \geq  0$.
\end{proof}
\begin{prop}\label{projdp}
An irreducible quintic surface $X$ with a curve of singularities of multiplicity two  is non-stable if and only if there is a coordinate system such that $F_X$ is given as Equation \eqref{qdp} and the branch locus associated to the morphism 
$
( G_{F_X}(\psi,x_0,x_1,x_2) = 0) \rightarrow \mathbb{P}^2
$
can be written as one of the following equations:
\begin{align*}
D_{\lambda_5}(x_0,x_1,x_2) 
&=
x_0^2
\left(
x_2^7x_0^3+\sum_{k=1}^{2}x_0^k\sum_{i=0}^{2}x_2^{3k-i}f_{10-4k+i}(x_0,x_1)
+f_{10}(x_0,x_1)
\right) 
\\
D_{\lambda_9}(x_0,x_1,x_2) &= x_2^6x_0^5x_1 +\sum_{i=0}^{5} x_2^ix_0^if_{12-2i}f(x_0,x_1).
\end{align*}
\end{prop}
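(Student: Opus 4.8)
The plan is to reduce the statement, via the cubic cover of Lemma \ref{dpcov}, to a purely combinatorial comparison between the monomial support of $G_{X,p}=\psi^3+h_4\psi+h_6$ and that of its discriminant $D_{X,p}=4h_4^3+27h_6^2$. First I would record which critical one parameter subgroups can destabilize a surface with at worst double points. By Proposition \ref{prop:maximalsemistableops} non stability means $\Xi_{F_X}\subset M^{\oplus}(\lambda_i)$ for some $i$, and by Table \ref{cr1ps} together with Propositions \ref{prop:GIT} and \ref{prop:unstableGIT} the subgroups $\lambda_1,\lambda_3,\lambda_4$ force a triple point with non reduced tangent cone, $\lambda_2,\lambda_6,\lambda_8$ a non isolated triple point, $\lambda_7$ a quadruple point, and $\lambda_{10}$ a triple line; all are incompatible with the hypothesis that $X$ has only double points. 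Hence the only relevant cases are $i\in\{5,9\}$, which is precisely the range of Lemma \ref{dpcov}. Thus $X$ is non stable if and only if, in suitable coordinates with the distinguished double point at $p_3$ and $F_X$ as in \ref{qdp}, the pair $(h_4,h_6)$ of \ref{gx} satisfies the weight conditions $w_{\lambda_i}(h_4)\ge c_1(\lambda_i)$ and $w_{\lambda_i}(h_6)\ge c_2(\lambda_i)$.

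For the forward implication I would propagate these weight conditions through the discriminant. Since a product of monomials of weight $\ge c_1$ (resp. $\ge c_2$) has weight $\ge 3c_1$ (resp. $\ge 2c_2$), and $3c_1(\lambda_i)=2c_2(\lambda_i)$, the polynomial $D_{X,p}$ is supported on degree $12$ monomials of $w_{\lambda_i}$-weight $\ge 2c_2(\lambda_i)$; enumerating that monomial region yields precisely the displayed span $D_{\lambda_5}$, respectively $D_{\lambda_9}$. The structural difference between the two is explained by divisibility: for $w_{\lambda_5}=(5,2,1)$ every degree $4$ (resp. degree $6$) monomial of weight $\ge 10$ (resp. $\ge 15$) is divisible by $x_0$, so $x_0\mid h_4$ and $x_0\mid h_6$ and hence $x_0^2\mid D_{X,p}$, giving the factored shape of $D_{\lambda_5}$; whereas for $w_{\lambda_9}=(11,5,0)$ the monomials $x_1^4$ in $h_4$ and $x_1^6$ in $h_6$ carry no $x_0$, so $D_{X,p}$ has no forced linear factor and one only isolates the single extremal monomial $x_0^5x_1x_2^6$ in $D_{\lambda_9}$.

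The delicate part is the converse: from the knowledge that $D_{X,p}$ lies in the span above I must recover the weight bounds on $(h_4,h_6)$ and then invoke the ``conversely'' clause of Lemma \ref{dpcov}. The obstruction is that the map $(h_4,h_6)\mapsto 4h_4^3+27h_6^2$ is far from injective, so a priori low weight terms of $h_4$ and $h_6$ could cancel in $D_{X,p}$. I would exclude this by a minimal weight argument: letting $w_4,w_6$ be the lowest weights occurring in $h_4,h_6$, either $\min(3w_4,2w_6)\ge 2c_2(\lambda_i)$, giving the bounds directly, or $3w_4=2w_6<2c_2(\lambda_i)$ and the lowest weight parts must cancel, forcing $4(h_4^{\min})^3=-27(h_6^{\min})^2$. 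Unique factorization then gives $h_4^{\min}=\alpha\gamma^2$ and $h_6^{\min}=\beta\gamma^3$ for a homogeneous $\gamma$, so $G_{X,p}$ has a repeated root along $(\gamma=0)$ at leading order; this forces the triple cover to degenerate, contradicting the standing assumptions on $p$ (generic finiteness, finitely many lines through $p$), or else $\gamma$ is absorbed by a residual Tschirnhaus change $\psi\mapsto\psi+(\,\cdot\,)$ compatible with \ref{gx}. Either way one returns to the weight bounded case and Lemma \ref{dpcov} concludes. I expect this cancellation analysis to be the main technical hurdle.
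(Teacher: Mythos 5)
First, a structural remark: the paper states Proposition \ref{projdp} with \emph{no proof at all}, so the only benchmark is the derivation it clearly intends, namely Lemma \ref{dpcov} combined with the formula $D_{X,p}=4h_4^3+27h_6^2$; your skeleton (reduction to $\lambda_5,\lambda_9$ via the singularity dictionary, forward implication by weight propagation, converse by recovering the weight bounds on $(h_4,h_6)$) is exactly that route, and your reduction step and the identity $3c_1(\lambda_i)=2c_2(\lambda_i)$ are correct. The genuine gap in your forward direction is the sentence ``enumerating that monomial region yields precisely the displayed span'': it is false, and in fact the three sets in play (the weight-truncated region, the displayed span, and the true support of $D$) are pairwise different. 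Concretely, for $\lambda_9$ the monomial $x_0^6x_2^6$ has weight $66\geq 60$ but is absent from the displayed $D_{\lambda_9}$, while it \emph{does} occur for the generic destabilized surface: $x_0^2x_2^2\in\Xi_{h_4}$ and $x_0^3x_2^3\in\Xi_{h_6}$ are allowed, and expanding $D$ through \ref{gx} its coefficient equals minus the discriminant of $t^3+bt^2+ct+d$, where $b,c,d$ are the coefficients of $x_0x_2$ in $g_2$, of $x_0^2x_2^2$ in $f_4$, and of $x_0^2x_2^3$ in $f_5$ --- generically nonzero. In the other direction, the displayed term $x_0^5x_1x_2^6$ has weight exactly $60$, but the only weight-$20$ monomial permitted in $h_4$ is $x_1^4$ and the only weight-$30$ monomial permitted in $h_6$ is $x_1^6$, so the unique weight-$60$ monomial $D$ can ever contain is $x_1^{12}$; likewise the leading displayed term $x_0^5x_2^7$ of $D_{\lambda_5}$ is unreachable, since every monomial of $\Xi_{h_4}$ (resp.\ $\Xi_{h_6}$) free of $x_1$ has $x_0$-exponent at least $2$ (resp.\ $3$), and no three (resp.\ two) of these have $x_0$-exponents summing to $5$. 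So the weight truncation you propose does not yield the stated formulas --- indeed it exposes that the printed $D_{\lambda_9}$ cannot be literally correct --- and an honest forward proof must compute the Minkowski-sum support of $4h_4^3+27h_6^2$ subject to the constraint \ref{gx}.

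The converse is also not closed. Your cancellation dichotomy is the right concern, but neither of your two exits exists. If the lowest-weight parts cancel, so that $h_4^{\min}=\alpha\gamma^2$ and $h_6^{\min}=\beta\gamma^3$ with $4\alpha^3=-27\beta^2$, nothing degenerates: the cover $(G_{X,p}=0)\to\mathbb{P}^2$ is generically finite of degree three whenever $D\not\equiv 0$, and a repeated factor in one graded piece of the discriminant contradicts neither irreducibility nor the finiteness of the set of lines through $p$. And there is no ``residual Tschirnhaus change compatible with \ref{gx}'': any substitution $\psi\mapsto\psi+\delta$ with $\delta\neq 0$ reintroduces the term $3\delta\psi^2$, so the depressed form admits no shift whatsoever. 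Hence in the cancellation case you obtain neither a contradiction nor a reduction, and the converse remains unproven. Any workable treatment has to exploit the constraint that $(h_4,h_6)$ come from a quintic; for instance, reducing \ref{gx} modulo $x_0$ gives $D\equiv \overline{f}_4^{\,2}\bigl(4\overline{f}_4-\overline{g}_2^{\,2}\bigr) \pmod{x_0}$, so for $\lambda_5$ the divisibility $x_0^2\mid D$ imposed by the displayed form already forces $x_0\mid f_4$ or $4\overline{f}_4=\overline{g}_2^{\,2}$, and the argument should proceed case by case from such relations rather than from an abstract lowest-term analysis of an unconstrained pair $(h_4,h_6)$.
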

\begin{proof}
The morphism  $(G_{X,p}=0) \to \mathbb{P}^2$ is generically finite of degree three. Its associated branch locus is given by the equation 
$4h_4(x_0,x_1,x_2)^3+27h_6(x_0,x_1,x_2)^2.$
By the results of Section \ref{sec:GIT}, a quintic surface, as in the statement, is non-stable if and only if there is a coordinate system where
 $\Xi_{F_X} \subset M^{\oplus}(\lambda_k)$ with $k \in \{ 5,9 \}$.   Then the polynomials $h_4$ and $h_6$ in the equation of the branch locus satisfy inequalities as described in Lemma \ref{dpcov}. Our statement describes the most general branch loci that satisfy those inequalities.
\end{proof}

\bibliographystyle{amsalpha}
\bibliography{quintic} 	
\end{document}